\documentclass[11pt]{article}
\usepackage{cite}
\usepackage{mathrsfs} 
\usepackage{ifthen}
\usepackage{fancyhdr}
\usepackage[us,12hr]{datetime} 
\usepackage{exscale}
\usepackage{tabularx}
\usepackage{latexsym}
\usepackage{amsmath,amssymb,amstext,amsthm} 
\usepackage{graphicx,color,epsfig}
\usepackage[table]{xcolor}
\usepackage{graphicx}
\usepackage{fullpage}        
\usepackage{float}
\usepackage{verbatim}
\usepackage{enumerate}
\usepackage[bookmarks,pagebackref,
    pdfpagelabels=true, 
    ]{hyperref}
    \numberwithin{equation}{section}
\usepackage{makeidx}
\usepackage{tikz}
\usepackage[final]{pdfpages}
\usepackage{algorithm,algorithmic} 
\usepackage{lineno}
\numberwithin{table}{section}
\numberwithin{algorithm}{section}
\makeindex


\def\R{\mathbb{R}}
\def\Sc{\mathbb{S}}
\def\Sn{\Sc^n}

\def\Snp{\Sc_+^n}
\def\Srp{\Sc_+^r}
\def\Snrp{\Sc_+^{n-r}}
\def\Snpp{\Sc_{++}^n}
\def\Srpp{\Sc_{++}^r}
\def\Snrpp{\Sc_{++}^{n-r}}

\def\Rk{\mathbb{R}^k}

\def\Rn{\mathbb{R}^n}

\def\Rq{\mathbb{R}^q}

\def\eqref#1{{\normalfont(\ref{#1})}}




\def\eqref#1{{\normalfont(\ref{#1})}}

\newtheorem{theorem}{Theorem}[section]

\newtheorem{definition}[theorem]{Definition}
\newtheorem{example}[theorem]{Example}
\newtheorem{assump}[theorem]{Assumption}
\newtheorem{prop}[theorem]{Proposition}

\newtheorem{corollary}[theorem]{Corollary}

\newtheorem{remark}[theorem]{Remark}

\newtheorem{problem}[theorem]{Problem}

\newtheorem{lemma}[theorem]{Lemma}


\newcommand{\textdef}[1]{\textit{#1}\index{#1}}

\newcommand{\Xa}{{X(\alpha)}}
\newcommand{\ya}{{y(\alpha)}}
\newcommand{\Za}{{Z(\alpha)}}

\newcommand{\Xoa}{{X(\alpha)}}

\newcommand{\LL}{{\mathcal L} }

\newcommand{\DD}{{\mathcal D} }

\newcommand{\UU}{{\mathcal U} }
\newcommand{\VV}{{\mathcal V} }

\newcommand{\SDP}{\textbf{SDP}\,}
\newcommand{\SDPs}{\textbf{SDPs}\,}
\newcommand{\SDPsp}{\textbf{SDPs}}
\newcommand{\SDPR}{\textbf{SDP-R}\,}
\newcommand{\DSDP}{\textbf{D-SDP}\,}
\newcommand{\SDPp}{\textbf{SDP}}

\newcommand{\Rm}{{\R^m\,}}
\newcommand{\Rtn}{{\R^{\scriptsize{t(n)}}\,}}

\newcommand{\NN}{{\mathbb N}}

\newcommand{\A}{{\mathcal A}}

\newcommand{\F}{{\mathcal F\,}}
\newcommand{\N}{{\mathcal N\,}}

\newcommand{\bbm}{\begin{bmatrix}}
\newcommand{\ebm}{\end{bmatrix}}
\newcommand{\bem}{\begin{pmatrix}}
\newcommand{\eem}{\end{pmatrix}}
\newcommand{\beq}{\begin{equation}}
\newcommand{\beqs}{\begin{equation*}}
\newcommand{\bet}{\begin{table}}
\newcommand{\eeq}{\end{equation}}
\newcommand{\eeqs}{\end{equation*}}
\newcommand{\beqr}{\begin{eqnarray}}

\DeclareMathOperator{\face}{face}
\DeclareMathOperator{\sd}{sd}

\DeclareMathOperator{\nul}{null}
\DeclareMathOperator{\range}{range}

\DeclareMathOperator{\kvec}{{vec}}

\DeclareMathOperator{\trace}{{trace}}

\DeclareMathOperator{\diag}{{diag}}

\DeclareMathOperator{\svec}{{svec}}

\DeclareMathOperator{\sMat}{{sMat}}

\DeclareMathOperator{\relint}{{relint}}

\DeclareMathOperator{\cl}{{cl}}

\DeclareMathOperator{\rank}{{rank}}
\DeclareMathOperator{\spanl}{{span}}


\newcommand{\nc}{\newcommand}
\nc{\arrow}{{\rm arrow\,}}
\nc{\Arrow}{{\rm Arrow\,}}
\nc{\BoDiag}{{\rm B^0Diag\,}}
\nc{\bodiag}{{\rm b^0diag\,}}

\nc{\Mm}{{\mathcal M}^{m} }
\nc{\Mmn}{{\mathcal M}^{mn} }
\nc{\Mnr}{{\mathcal M}_{nr} }
\nc{\Mnmr}{{\mathcal M}_{(n-1)r} }
\nc{\kwqqp}{Q{$^2$}P\,}
\nc{\kwqqps}{Q{$^2$}Ps}

\nc{\notinaho}{(X,S)\in \overline{AHO}(\A)}
\nc{\inaho}{(X,S)\in AHO(\A)}

\newcommand{\bea}{\begin{eqnarray}}%
\newcommand{\eea}{\end{eqnarray}}%
\newcommand{\beas}{\begin{eqnarray*}}%
\newcommand{\eeas}{\end{eqnarray*}}%
\newcommand{\Rnn}{\R^{n \times n}}%

%
%
%
%
%
%
%
%
%
%
\renewcommand{\F}{\mathcal{F}}%
%
%
%
%
%
%
%
%
%
{}



\newcommand{\Hnp}[1][]{\,\mathbb{H}_+^{\ifthenelse{\equal{#1}{}}{n}{#1}}}
\newcommand{\Hn}[1][]{\,\mathbb{H}^{\ifthenelse{\equal{#1}{}}{n}{#1}}}
\newcommand{\Dn}[1][]{\,\mathbb{D}^{\ifthenelse{\equal{#1}{}}{n}{#1}}}



\newcommand{\ba}{b(\alpha) }
\newcommand{\Pa}{{\bf P(\alpha)}}

\newcommand{\Fa}{\F(\alpha)}

\begin{document}

\bibliographystyle{plain}
\title{
Complete Facial Reduction in One Step for Spectrahedra
}
             \author{
\href{https://uwaterloo.ca/combinatorics-and-optimization/about/people/ssremac}{Stefan
Sremac}\thanks{Department of Combinatorics and Optimization
        Faculty of Mathematics, University of Waterloo, Waterloo, Ontario, Canada N2L 3G1; Research supported by The Natural Sciences and Engineering Research Council of Canada and by AFOSR.
}
\and
\href{http://people.orie.cornell.edu/dd379}
{Hugo Woerdeman}\thanks{Department of Mathematics, Drexel University, 
3141 Chestnut Street, Philadelphia, PA 19104, USA.  Research supported by 
Simons Foundation grant 355645.
}
\and
\href{http://www.math.uwaterloo.ca/~hwolkowi/}
{Henry Wolkowicz}%
        \thanks{Department of Combinatorics and Optimization
        Faculty of Mathematics, University of Waterloo, Waterloo, Ontario, Canada N2L 3G1; Research supported by The Natural Sciences and Engineering Research Council of Canada and by AFOSR; 
\url{www.math.uwaterloo.ca/\~hwolkowi}.
}
}
\date{\today}
\fancypagestyle{plain}{
\fancyhf{}
\rfoot{Compiled on {\ddmmyyyydate\today} at \currenttime}
\lfoot{Page \thepage}
\renewcommand{\headrulewidth}{0pt}}
          \maketitle



\begin{abstract}
A spectrahedron is the feasible set of a semidefinite program, \SDPp,
i.e.,~the intersection of an affine set with the positive semidefinite cone.
While strict feasibility is a generic property for random problems, 
there are many classes of problems where strict feasibility fails and
this means that strong duality can fail as well. If the minimal
face containing the spectrahedron is known, the \SDP can easily be
transformed into an equivalent problem where strict feasibility holds
and thus strong duality follows as well.
The minimal face is fully characterized by the range or nullspace
of any of the matrices in its relative interior. Obtaining such a matrix
may require many \emph{facial reduction} steps and is currently not known to be a tractable problem
for spectrahedra with \emph{singularity degree} greater than one. We propose a \emph{single}
parametric optimization problem with a resulting type of \emph{central
path} and prove that the optimal solution 
is unique and in the relative interior 
of the spectrahedron. Numerical tests illustrate the efficacy of our
approach and its usefulness in regularizing \SDPsp.
\end{abstract}

{\bf Keywords:}
Semidefinite programming, SDP, facial reduction, singularity degree,
maximizing $\log \det$.

{\bf AMS subject classifications:} 90C22, 90C25

\tableofcontents
\listoftables
\listoffigures

\section{Introduction}
\label{sec:intro}

A \textdef{spectrahedron} is the intersection of an affine manifold with the 
positive semidefinite cone.  Specifically, if \textdef{$\Sn$} denotes the set 
of $n\times n$ symmetric matrices, \textdef{$\Snp$}$ \subset \Sn$ denotes the set of positive semidefinite matrices, $\A:\Sn \rightarrow \Rm$ is a linear map, and $b \in \Rm$, then
\begin{equation}
\label{eq:feasset}
\textdef{$\F=\F(\A,b)$} := \{ X\in \Snp : \A(X) = b \}
\end{equation}
is a spectrahedron. We emphasize that $\F$ is given to us as a
function of the algebra, the data $\A,b$, rather than the geometry.

Our motivation for studying spectrahedra arises from 
\textdef{semidefinite programs, \SDPsp}\index{\SDP, semidefinite program}, where a linear objective is minimized 
over a spectrahedron.  In contrast to \textdef{linear programs}, strong duality is not an inherent property of \SDPs, but depends on a \textdef{constraint qualification (CQ)}\index{CQ, constraint qualification} such as the Slater CQ.  For an \SDP not satisfying the Slater CQ, the central path of the standard interior point algorithms is undefined and there is no guarantee of strong duality or convergence.  Although instances where the Slater CQ fails are pathological, see e.g. \cite{MR3622250} and \cite{Pataki2017}, they occur in many applications and this phenomenon has lead to the development of a number of regularization methods, \cite{RaTuWo:95,Ram:95,lusz00,int:deklerk7,LuoStZh:97}.

In this paper we focus on the \textdef{facial reduction} method, \cite{bw1,bw2,bw3}, where the optimization problem is restricted to the minimal face of $\Snp$ containing $\F$, denoted $\face(\F)$.  We note that the different regularization methods for \SDP are not fundamentally unrelated.  Indeed, in \cite{RaTuWo:95} a relationship between the extended dual of Ramana, \cite{Ram:95}, and the facial reduction approach is established and in\cite{MR3063940} the authors show that the dual expansion approach, \cite{lusz00,LuoStZh:97} is a kind of `dual' of facial reduction.  When knowledge of the minimal face is available, the optimization problem is easily transformed into one for which the Slater CQ holds.  Many of the applications of facial reduction to \SDP rely on obtaining the minimal face through analysis of the underlying structure.  See, for instance, the recent survey \cite{DrusWolk:16} for applications to hard combinatorial optimization and matrix completion problems.

In this paper we are interested in instances of \SDP where the minimal face can not be obtained analytically.  An algorithmic approach was initially presented in \cite{bw3} and subsequent analyses of this algorithm as well as improvements, applications to \SDP, and new approaches may be found in \cite{MR3108446,MR3063940,ScTuWonumeric:07,perm,permfribergandersen,2016arXiv160802090P,waki_mur_sparse}.  While these algorithms differ in some aspects, their main structure is the same.  At each iteration a subproblem is solved to obtain an \emph{exposing vector} for a face (not necessarily minimal) containing $\F$.  The \SDP is then reduced to this smaller face and the process repeated until the \SDP is reduced to $\face(\F)$.  Since at each iteration, the dimension of the ambient face is reduced by one, at most $n-1$ iterations are necessary.  We remark that this method is a kind of `dual' approach, in the sense that the exposing vector obtained in the subproblem is taken from the dual of the smallest face available at the current iteration.  We highlight two challenges with this approach: (1) each subproblem is itself an \SDP and thereby computationally intensive and (2) at each iteration a decision must be made regarding the rank of the exposing vector.

With regard to the first challenge, we note that it is really two-fold.  The computational expense arises from the complexity of an individual subproblem and also from the number of such problems to be solved.  The subproblems produced in \cite{ScTuWonumeric:07} are `nice' in the sense that strong duality holds, however, each subproblem is an \SDP and its computational complexity is comparable to that of the original problem.  In \cite{perm} a relaxation of the subproblem is presented that is less expensive computationally, but may require more subproblems to be solved.  The number of subproblems needed to solve depends of course on the structure of the problem but also on the method used to determine that facial reduction is needed.  For algorithms using the theorem of the alternative, \cite{bw1,bw2,bw3}, a theoretical lower bound, called the \emph{singularity degree}, is introduced in \cite{S98lmi}.  In \cite{MR2724357} an example is constructed for which the singularity degree coincides with the upper bound of $n-1$, i.e., the worst case exists.  In \cite{permfribergandersen}, the \textdef{self-dual embedding} algorithm of \cite{int:deklerk7} is used to determine whether facial reduction is needed.  This approach may require fewer subproblems than the singularity degree.    

The second challenge is to determine which eigenvalues of the exposing vector obtained at each iteration are identically zero, a classically challenging problem.  If the rank of the exposing vector is chosen too large, the problem may be restricted to a face which is smaller than the minimal face.  This error results in losing part of the original spectrahedron.  If on the other hand, the rank is chosen too small, the algorithm may require more iterations than the singularity degree.  The algorithm of \cite{ScTuWonumeric:07} is proved to be backwards stable only when the singularity degree is one, and the arguments can not be extended to higher singularity degree problems due to possible error in the decision regarding rank.  

Our main contribution in this paper is a `primal' approach to facial reduction, which does not rely on exposing vectors, but instead obtains a matrix in the relative interior of $\F$, denoted $\relint(\F)$
Since the minimal face is characterized by the range of any such matrix, we obtain a facially reduced problem in just one step.  As a result, we eliminate costly subproblems and require only one decision regarding rank.
\index{relative interior, $\relint(\cdot)$}
\index{$\relint(\cdot)$, relative interior}

While our motivation arises from \SDPsp, the problem of characterizing the relative interior of a spectrahedron is independent of this setting.  The problem is formally stated below.
\begin{problem}
\label{prob:main}
Given a spectrahedron $\F(\A,b) \subseteq \Sn$, find $\bar{X}\in \relint(\F)$.
\end{problem}

This paper is organized as follows.  In Section~\ref{sec:prelim} we introduce notation and discuss relevant material on \SDP strong duality and facial reduction.  We develop the theory for our approach in Section~\ref{sec:paramprob}, prove convergence to the relative interior, and prove convergence to the analytic center under a sufficient condition.  In Section~\ref{sec:projGN}, we propose an implementation of our approach and we present numerical results in Section~\ref{sec:numerics}.  We also present a method for generating instances of \SDP with varied singularity degree in Section~\ref{sec:numerics}.  We conclude the main part of the paper with an application to matrix completion problems in Section~\ref{sec:psdcyclecompl}.

\section{Notation and Background}
\label{sec:prelim}
Throughout this paper the ambient space is the Euclidean space of
$n\times n$ real symmetric matrices, $\Sn$, with the standard 
\textdef{trace inner product}
\[
\langle X,Y \rangle := \trace(XY) = \sum_{i=1}^n \sum_{j=1}^n X_{ij}Y_{ij},
\]
and the induced \textdef{Frobenius norm}
\[
\lVert X \rVert_F := \sqrt{\langle X, X\rangle }.
\]
In the subsequent paragraphs, we highlight some well known results on
the cone of positive semidefinite matrices and its faces, as well other
useful results from convex analysis.  For proofs and further reading we
suggest \cite{SaVaWo:97,MR2724357,con:70}.  The dimension of $\Sn$ is the triangular number $n(n+1)/2=: t(n)$.  
We define \textdef{$\svec$}$: \Sn \rightarrow \Rtn$ such that it maps the upper triangular 
elements of $X \in \Sn$ to a vector in $\Rtn$ where the off-diagonal 
elements are multiplied by $\sqrt{2}$.  Then $\svec$ is an isometry and an 
isomorphism with \textdef{$\sMat$}$ := \svec^{-1}$.  Moreover, for 
$X,Y \in \Sn$, 
\index{$t(n)$, triangular number}
\index{triangular number, $t(n)$}
\[
\langle X,Y \rangle = \svec(X)^T \svec(Y).
\]
The eigenvalues of any $X \in \Sn$ are real and indexed so as to satisfy,
\index{operator norm, $\|X\|$}
\index{$\|X\|$, operator norm}
\[
\lambda_1(X) \ge \lambda_2(X) \ge \cdots \ge \lambda_n(X),
\]
and $\lambda(X) \in \Rn$ is the vector consisting of all the eigenvalues.  
In terms of this notation, the operator 2-norm for matrices is defined as 
$\lVert X \rVert_2 := \max_i \lvert \lambda_i(X) \rvert$.  When the argument to $\| \cdot \|_2$ is a vector, this denotes the usual Euclidean norm.  
The Frobenius norm may also be expressed in terms of eigenvalues: $\lVert X \rVert_F=
\lVert \lambda(X) \rVert_2$.  The set of \textdef{positive semidefinite
(PSD)}\index{PSD, positive semidefinite matrices} matrices, $\Snp$, is a closed convex cone in $\Sn$, whose
interior consists of the \textdef{positive definite (PD)}\index{PD, positive definite matrices} matrices,
\textdef{$\Snpp$}.  The cone $\Snp$ induces the \textdef{L\"owner partial order} on $\Sn$.
That is, for $X,Y \in \Sn$ we write $X\succeq Y$ when $X-Y \in \Snp$ and similarly $X\succ Y$ when $X-Y \in \Snpp$.  For $X,Y \in \Snp$ the following equivalence holds:
\begin{equation}
\label{eq:innerprodmatrixprod}
\langle X, Y \rangle =0 \ \iff \ XY = 0.
\end{equation}
\begin{definition}[face]
\label{def:face}
A closed convex cone $f \subseteq \Snp$ is a \textdef{face} of $\Snp$ if 
\[
X,Y \in \Snp, \ X+Y \in f \ \implies \ X,Y \in f.
\]
\end{definition}
A nonempty face $f$ is said to be \emph{proper} if $f \ne \Snp$ and $f
\ne 0$.  Given a convex set $C \subseteq \Snp$, the \textdef{minimal face}\index{$\face(\cdot)$, minimal face}
of $\Snp$ containing $f$, with respect to set inclusion, is denoted
$\face(C)$.  A face $f$ is said to be \emph{exposed} if there exists $W \in \Snp \setminus \{0\}$ such that 
\[
f = \{X \in \Snp : \langle W, X \rangle = 0\}.
\]
Every face of $\Snp$ is exposed and the vector $W$ is referred to as an
\textdef{exposing vector}.  The faces of $\Snp$ may be characterized in terms of the range of any of its maximal rank elements.  Moreover, each face is isomorphic to a smaller dimensional positive semidefinite cone, as is seen in the subsequent theorem.
\begin{theorem}[\cite{DrusWolk:16}]
\label{thm:face}
Let $f$ be a face of $\Snp$ and $X \in f$ a maximal rank element with rank $r$ and orthogonal spectral decomposition
\[
X=\begin{bmatrix} V & U  \end{bmatrix}
\begin{bmatrix} D & 0 \cr 0 & 0 \end{bmatrix}
\begin{bmatrix} V & U  \end{bmatrix}^T \in \Snp, \quad D\in \Srpp.
\]
Then $f = V \Srp V^T$ and $\relint(f) = V \Srpp V^T$.  Moreover, $W \in \Snp$ is an exposing vector for $f$ if
and only if $W \in U\Snrpp U^T$.
\end{theorem}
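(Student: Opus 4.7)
The plan is to establish the three claims in sequence, using the spectral decomposition of $X$ together with the convex-analytic fact that a maximal-rank element of $f$ lies in $\relint(f)$. The key observation that makes everything tick is that the columns of $V$ span $\range(X)$ while the columns of $U$ span $\nul(X)$, and $[V\ U]$ is orthogonal so $VV^T + UU^T = I$.

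First, for $f = V\Srp V^T$, I would prove both inclusions. For $V\Srp V^T \subseteq f$: given $Y = VAV^T$ with $A \in \Srp$, pick $\epsilon>0$ large enough that $\epsilon D \succeq A$ (eigenvalue comparison works since $D \in \Srpp$). Then $\epsilon X - Y = V(\epsilon D - A)V^T \in \Snp$ and $Y \in \Snp$, so the decomposition $\epsilon X = (\epsilon X - Y) + Y$ together with $\epsilon X \in f$ and the face property forces $Y \in f$. For the reverse inclusion: since $X$ has maximal rank in $f$ it lies in $\relint(f)$, hence for any $Y \in f$ there is $\epsilon>0$ with $(1+\epsilon)X - \epsilon Y \in f \subseteq \Snp$, i.e.\ $(1+\epsilon)X \succeq \epsilon Y$. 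The standard fact that $A \succeq B \succeq 0$ implies $\range(B) \subseteq \range(A)$ (apply to null vectors of $A$) then gives $\range(Y) \subseteq \range(X) = \range(V)$, so $Y = VAV^T$ with $A = V^T Y V \in \Srp$.

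Next, for $\relint(f) = V\Srpp V^T$, I would observe that since $V$ has orthonormal columns, the map $A \mapsto VAV^T$ is an injective linear isomorphism from $\Sr$ onto $V\Sr V^T$. Such maps send relative interiors to relative interiors, and $\relint(\Srp) = \Srpp$, so the identity follows. (Equivalently, $\relint(f)$ consists precisely of the rank-$r$ elements of $f$.)

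Finally, for the exposing vector characterization, I would use \eqref{eq:innerprodmatrixprod} to replace $\langle W, Y\rangle = 0$ with $WY = 0$ for $W, Y \in \Snp$, i.e.\ $\range(Y) \subseteq \nul(W)$. Then $W$ exposes $f$ iff
\[
\{Y \in \Snp : \range(Y) \subseteq \nul(W)\} \;=\; V\Srp V^T \;=\; \{Y \in \Snp : \range(Y) \subseteq \range(V)\},
\]
which forces $\nul(W) = \range(V)$, equivalently $\range(W) = \range(U)$. Writing $W = UBU^T$ with $B \in \Scal^{n-r}$, positive semidefiniteness of $W$ plus the orthonormality of $U$ gives $B \succeq 0$, and the rank condition $\rank W = n-r$ upgrades this to $B \in \Snrpp$. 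Conversely, any $W = UBU^T$ with $B \in \Snrpp$ has $\nul(W) = \range(V)$ and thus exposes $f$.

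The main subtlety is the step where maximal rank of $X$ in $f$ is leveraged to conclude $X \in \relint(f)$ and hence to derive the range inclusion for arbitrary $Y \in f$; this is the convex-analytic ingredient that links the algebraic rank hypothesis to the geometric statement. The rest is routine PSD linear algebra.
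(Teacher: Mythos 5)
The paper does not supply its own proof of this theorem: it is quoted from \cite{DrusWolk:16}, so there is nothing in the text to compare your argument against. I can therefore only assess your proposal on its own terms.

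Your overall plan is sound and the exposing-vector part is handled correctly and cleanly, but there is one step that needs repair. In the reverse inclusion $f \subseteq V\Srp V^T$ you invoke the claim that a maximal-rank element of $f$ lies in $\relint(f)$. That claim is true for faces of $\Snp$, but it is \emph{not} true for arbitrary convex cones inside $\Snp$ (e.g.\ $\{\Diag(s,t): 0\le t\le s\}$ has $\Diag(1,1)$ of maximal rank on its relative boundary), and the standard proof of it for faces runs through precisely the identity $f = V\Srp V^T$ that you are trying to establish, so as written the argument is circular. The fix is easy and bypasses relative interiors entirely: for $Y\in f$, since $f$ is a convex cone $X+Y\in f$, and for PSD matrices $\range(X+Y)=\range(X)+\range(Y)\supseteq\range(X)$; maximality of $\rank X$ forces $\range(X+Y)=\range(X)$, hence $\range(Y)\subseteq\range(X)=\range(V)$, and then $Y=V(V^TYV)V^T$ with $V^TYV\in\Srp$. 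Once $f=V\Srp V^T$ is in hand, the statement about relative interiors follows from your isomorphism argument, and the "maximal rank $\Rightarrow$ relative interior" fact becomes a corollary rather than a hypothesis. Everything else — the forward inclusion via $\epsilon D\succeq A$, the use of \eqref{eq:innerprodmatrixprod}, and the identification $\nul(W)=\range(V)$ via rank-one test matrices $vv^T$ — is correct.
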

We refer to $U\Snrp U^T$, from the above theorem, as the \textdef{conjugate
face}\index{$f^c$, conjugate face}, denoted $f^c$.  For any convex set $C$, an explicit form for $\face(C)$ and $\face(C)^c$ may be obtained from the orthogonal spectral decomposition of any of its maximal rank elements as in Theorem~\ref{thm:face}.

For a linear map $\A : \Sn \rightarrow \Rm$, there exist $S_1,
\dotso,S_m \in \Sn$ such that
\index{$(\A(X))_i = \langle X,S_i \rangle$}
\[
\begin{pmatrix} \A(X)\end{pmatrix}_i = \langle X,S_i \rangle, \quad \forall i \in \{1,\dotso,m\}. 
\]
The \textdef{adjoint} of $\A$ is the unique linear map $\A^* : \Rm \rightarrow \Sn$ satisfying 
\[
\langle \A(X),y \rangle = \langle X,\A^*(y) \rangle,
\quad \forall  X \in \Sn, \, y \in \Rm,
\]
and has the explicit form $\A^*(y) = \sum_{i=1}^m y_i S_i$,
i.e.,~$\range(\A^*)=\spanl \{S_1,\ldots,S_m\}$.
We define $A_i\in \Sn$ to form a basis for the nullspace,
$\nul(\A)=\spanl \{ A_1,\dotso,A_q\}$.
\index{$\range (\A^*)=\spanl \{ S_1,\dotso,S_m\}$}
\index{$\nul(\A)=\spanl \{ A_1,\dotso,A_q\}$}

For a non-empty convex set $C \subseteq \Sn$ the \textdef{recession cone}\index{$C^{\infty}$, recession cone}, denoted $C^{\infty}$, captures the directions in which $C$ is unbounded.  That is
\begin{equation}
\label{eq:recession}
C^{\infty} := \{Y \in \Sn : X + \lambda Y \in C, \ \forall \lambda \ge 0, \ X \in C \}.
\end{equation}
Note that the recession directions are the same at all points $X \in C$.  For a non-empty set $S \subseteq \Sn$, the \textdef{dual cone}\index{$S^+$, dual cone} (also referred to as the positive polar) is defined as
\begin{equation}
\label{eq:dualcone}
S^+ := \{ Y \in \Sn : \langle X, Y \rangle \ge 0, \ \forall X \in S\}.
\end{equation} 
A useful result regarding dual cones is that for cones $K_1$ and $K_2$,
\begin{equation}
\label{eq:dualintersection}
(K_1 \cap K_2)^+ = \cl(K_1^+ + K_2^+),
\end{equation}
where \textdef{$\cl(\cdot)$}\index{closure, $\cl(\cdot)$} denotes set closure.

\subsection{Strong Duality in Semidefinite Programming and Facial Reduction}
\label{sec:sdpstrongduality}
Consider the standard primal form SDP
\begin{equation}
\label{prob:sdpprimal}
\SDP \qquad \qquad \textdef{$p^{\star}$}:=\min \{ \langle C,X\rangle : \A(X)=b, X\succeq 0\},
\end{equation}
with Lagrangian dual
\begin{equation}
\label{prob:sdpdual}
\DSDP \qquad \qquad \textdef{$d^{\star}$}:=\min \{ b^Ty  : \A^*(y) \preceq C \}.
\end{equation}
Let $\F$ denote the spectrahadron defined by the feasible set of $\SDP$.
One of the challenges in semidefinite programming is that strong duality
is not an inherent property, but depends on a constraint qualification,
such as the Slater CQ.
\begin{theorem}[strong duality,~\cite{SaVaWo:97}]
\label{thm:strongduality}
If the primal optimal value $p^{\star}$ is finite and
$\F \cap \Snpp \ne \emptyset$, then the primal-dual pair $\SDP$ and
$\DSDP$ have a \textdef{zero duality gap}, $p^{\star}=d^{\star}$, and $d^{\star}$ is attained.
\end{theorem}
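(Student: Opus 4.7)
The plan is to use a standard Hahn--Banach separation argument together with the Slater point $\hat X \in \F \cap \Snpp$ to force the multiplier attached to the objective to be strictly positive, at which point the separating functional rescales to a feasible, optimal dual variable.

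First I would record weak duality as a sanity check: for any primal-feasible $X$ and any dual-feasible $y$ with $C - \A^*(y) \succeq 0$, the identity $b^T y = \langle \A(X), y \rangle = \langle X, \A^*(y) \rangle$ combined with $X \succeq 0$ and $C - \A^*(y) \succeq 0$ yields $b^T y \leq \langle C, X \rangle$, giving the inequality relating $p^\star$ and $d^\star$ in the desired direction. Next, to obtain zero duality gap and attainment, I would form the convex set
\[
V := \{(\A(X) - b,\ \langle C, X \rangle + s) : X \in \Snp,\ s \geq 0\} \subseteq \R^m \times \R,
\]
which is convex as the image of $\Snp \times \R_+$ under a linear map. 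Since $p^\star$ is finite, for any $\alpha < p^\star$ the point $(0, \alpha)$ lies outside $V$, for otherwise there would exist a primal-feasible $X$ with $\langle C, X \rangle \leq \alpha < p^\star$. The separation theorem then produces $(y, \mu) \neq (0,0)$ with
\[
\langle y, \A(X) - b \rangle + \mu(\langle C, X \rangle + s) \geq \mu \alpha \quad \text{for all } X \in \Snp, \ s \geq 0.
\]

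From $s \to \infty$ I conclude $\mu \geq 0$, and letting $X$ range over the homogeneous cone $\Snp$ I extract both the conic constraint $\mu C - \A^*(y) \in \Snp$ and the inequality $\mu \alpha \leq -\langle y, b\rangle$ (take $X = 0$, $s=0$). The crucial step is ruling out $\mu = 0$. If $\mu = 0$, then $-\A^*(y) \succeq 0$ and $\langle y, b\rangle \leq 0$; evaluating at the Slater point $\hat X \succ 0$ gives
\[
\langle \hat X,\ -\A^*(y)\rangle = -\langle y, \A(\hat X)\rangle = -\langle y, b\rangle \geq 0,
\]
so this inner product is zero, and the equivalence \eqref{eq:innerprodmatrixprod} together with $\hat X \in \Snpp$ forces $\A^*(y) = 0$ and then $\langle y, b \rangle = 0$. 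A short argument using the nontriviality of the separator $(y,\mu)$ together with the fact that $\A$ may be assumed surjective (otherwise one reduces to a smaller ambient space) contradicts this, so $\mu > 0$.

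With $\mu > 0$ I normalize $\mu = 1$. Setting $\tilde y := -y$, the separating inequality rewrites as $C - \A^*(\tilde y) \succeq 0$ and $b^T \tilde y \geq \alpha$, exhibiting $\tilde y$ as dual-feasible with dual objective value at least $\alpha$. Letting $\alpha \uparrow p^\star$ and invoking weak duality then gives $b^T \tilde y = p^\star = d^\star$, which simultaneously closes the duality gap and shows attainment. The main obstacle I expect is the step ruling out $\mu = 0$: without Slater one only obtains a \emph{weak} separator certifying improper duality, and the delicate point is converting the strict positive-definiteness of $\hat X$ into the contradiction via \eqref{eq:innerprodmatrixprod}. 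Everything else is bookkeeping around the separation theorem.
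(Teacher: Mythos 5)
The paper does not prove this theorem; it simply cites \cite{SaVaWo:97}, so there is no internal proof to compare against. Your approach --- form the convex image set $V$ of $(\A,C)$ over $\Snp\times\R_+$, separate it from a point on the objective axis, then use the Slater point to force the multiplier $\mu$ attached to the objective to be strictly positive --- is the standard separation proof of conic strong duality under Slater's condition, and the overall strategy is sound. One cosmetic flaw: scaling $X$ over the cone $\Snp$ in your separating inequality gives $\A^*(y)+\mu C\succeq 0$, not $\mu C-\A^*(y)\succeq 0$; the slip propagates into the $\mu=0$ case, where with your signs the two inequalities at $\hat X$ point the same way and do not force $\langle \hat X,\A^*(y)\rangle=0$. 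With the correct sign (so $\A^*(y)\succeq 0$ and $\langle y,b\rangle\le 0$), the squeeze $0\le\langle\hat X,\A^*(y)\rangle=\langle y,b\rangle\le 0$ combined with \eqref{eq:innerprodmatrixprod} and $\hat X\succ 0$ gives $\A^*(y)=0$, and the contradiction with nontriviality of $(y,\mu)$ follows after the reduction to surjective $\A$ that you mention (valid since $b\in\range(\A)$ by feasibility).

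The substantive gap is attainment of $d^{\star}$. Separating at $(0,\alpha)$ for $\alpha<p^{\star}$ produces a dual-feasible $\tilde y(\alpha)$ that depends on $\alpha$; letting $\alpha\uparrow p^{\star}$ establishes $d^{\star}\ge p^{\star}$ (hence $d^{\star}=p^{\star}$ by weak duality) but exhibits no single feasible $\tilde y$ achieving that value, which is precisely the attainment claim in the theorem. The standard repair is to separate or support at $(0,p^{\star})$ directly: that point is not in the interior of $V$ (otherwise $(0,p^{\star}-\epsilon)\in V$ for small $\epsilon>0$, contradicting the definition of $p^{\star}$), and the Slater point together with surjective $\A$ guarantees that $V$ has nonempty interior, so the supporting hyperplane theorem applies at $(0,p^{\star})$ and yields a single $(y,\mu)$, hence a single dual-optimal $\tilde y$ with $b^T\tilde y=p^{\star}$. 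Alternatively one can show the set $\{\tilde y : C-\A^*(\tilde y)\succeq 0,\ b^T\tilde y\ge\alpha_0\}$ is bounded for fixed $\alpha_0<p^{\star}$ (any recession direction $d$ would have $\A^*(d)\preceq 0$ and $b^Td\ge 0$, and $\langle\hat X,\A^*(d)\rangle=b^Td$ again forces $\A^*(d)=0$) and pass to a subsequential limit of $\tilde y(\alpha)$, but that boundedness argument simply repeats the Slater reasoning and is less direct than supporting at $p^{\star}$.
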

Since the Lagrangian dual of the dual is the primal, this result can
similarly be applied to the dual problem, i.e.,~if the primal-dual pair
both satisfy the Slater CQ, then there is a zero duality gap and both
optimal values are attained. 

Not only can strong duality fail with the absence of
the Slater CQ, but the standard central path of an interior point
algorithm is undefined.  The facial reduction regularization approach of \cite{bw1,bw2,bw3} restricts \SDP to the minimal face of $\Snp$ containing $\F$:
\begin{equation}
\label{eq:sdpr}
\SDPR \qquad \qquad \min \{\langle C,X \rangle : \A(X) = b,\, X \in \face(\F) \}.
\end{equation}  
Since the dimension of $\F$ and $\face(\F)$ is the same, the Slater CQ holds for the facially reduced problem.  Moreover, $\face(\F)$ is isomorphic to a smaller dimensional positive semidefinite cone, thus $\SDPR$ is itself a semidefinite program.  The restriction to $\face(\F)$ may be obtained as in the results of 
Theorem~\ref{thm:face}.  The dual of \SDPR restricts the slack variable 
to the dual cone 
\[
Z=C-\A^*(y)\in \face(\F)^+.
\]
Note that $\F^+=\face(\F)^+$.
If we have knowledge of $\face(\F)$, i.e., we have the matrix $V$ such that $\face(\F) = V\Srp V^T$, then we may 
replace $X$ in \SDP by $VRV^T$ with $R \succeq 0$.  After rearranging, we obtain \SDPR.  Alternatively, 
if our knowledge of the minimal face is in the form of an exposing vector, 
say $W$, then we may obtain $V$ so that its columns form a basis for $\nul(W)$.  
We see that the approach is straightforward when knowledge of $\face(\F)$ is available.  In instances where such knowledge is unavailable, the following theorem of the alternative from \cite{bw3} guarantees the 
existence of exposing vectors that lie in $\range(\A^*)$.
\begin{theorem}[of the alternative,~\cite{bw3}]
\label{thm:alternative}
Exactly one of the following systems is consistent:
\begin{enumerate}
\item $\A(X) = b$, $X\succ 0$,
\item $0 \ne \A^*(y) \succeq 0$, $b^Ty = 0$.
\end{enumerate}
\end{theorem}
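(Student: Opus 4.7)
The plan is to prove the two implications of the alternative separately. The easy direction is that at most one system holds; the harder direction uses a supporting hyperplane argument to produce a witness for (2) when (1) fails, and tacitly relies on the assumption $\F\neq\emptyset$ that is natural in the facial reduction context.

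For the first direction, suppose both systems are consistent, with $X\succ 0$ satisfying $\A(X)=b$ and $y\in\Rm$ satisfying $\A^*(y)\succeq 0$, $\A^*(y)\ne 0$, and $b^Ty=0$. Using adjointness,
\[
0 = b^Ty = \langle \A(X),y\rangle = \langle X,\A^*(y)\rangle.
\]
Since $X,\A^*(y)\in\Snp$, the equivalence \eqref{eq:innerprodmatrixprod} gives $X\A^*(y)=0$; positive definiteness of $X$ forces $\A^*(y)=0$, a contradiction.

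For the converse, suppose (1) fails but $\F\neq\emptyset$. Consider the convex cone $K:=\A(\Snp)\subseteq\Rm$. Its affine hull $L$ is a linear subspace of $\Rm$ since $0\in K$, and $\relint(K)=\A(\Snpp)$ because relative interior commutes with linear images. The failure of (1) means $b\notin\A(\Snpp)=\relint(K)$, while $\F\neq\emptyset$ gives $b\in K$; hence $b$ lies on the relative boundary of $K$ within $L$. Apply the supporting hyperplane theorem in $L$ to obtain $y\in L\setminus\{0\}$ with $\langle y,v\rangle\ge\langle y,b\rangle$ for all $v\in K$. Choosing $v=0\in K$ gives $b^Ty\le 0$; for arbitrary $X_0\in\Snp$, the bound $\lambda\langle\A^*(y),X_0\rangle=\langle y,\A(\lambda X_0)\rangle\ge b^Ty$ as $\lambda\to\infty$ forces $\A^*(y)\succeq 0$; and substituting any $X_0\in\F$ gives $b^Ty=\langle\A^*(y),X_0\rangle\ge 0$, so $b^Ty=0$.

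The step I expect to require most care is ensuring $\A^*(y)\ne 0$. A generic separating functional in $\Rm$ could lie in $\nul(\A^*)=\range(\A)^\perp$ and yield a trivial hyperplane. The remedy is that $y$ has been chosen inside $L$: since $K\subseteq\range(\A)$ gives $L\subseteq\range(\A)$, and $\range(\A)\cap\nul(\A^*)=\{0\}$, any nonzero $y\in L$ satisfies $\A^*(y)\ne 0$. This delivers the required witness for (2) and completes the proof.
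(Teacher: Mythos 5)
Your proof is correct and follows the standard separating-hyperplane argument from the Borwein--Wolkowicz line of work that the paper cites (the paper itself does not reproduce a proof, only the citation). The forward direction via $\langle X,\A^*(y)\rangle=b^Ty=0$ and \eqref{eq:innerprodmatrixprod} is exactly right. In the converse, the crucial observation is the one you flag yourself: a generic supporting functional in $\Rm$ could lie in $\range(\A)^\perp=\nul(\A^*)$ and produce the useless witness $\A^*(y)=0$. Working in $L=\mathrm{aff}(K)=\mathrm{span}(K)\subseteq\range(\A)$ and invoking $\relint(\A(\Snp))=\A(\Snpp)$ resolves this cleanly; that is the same mechanism used in the literature, and you have justified each step (the chain $b^Ty\le 0$ from $0\in K$, $\A^*(y)\succeq 0$ from the conic scaling, and $b^Ty\ge 0$ from any $X_0\in\F$). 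Your explicit acknowledgement that the argument needs $\F\neq\emptyset$ is also warranted: as stated the theorem is false without a feasibility hypothesis (e.g.\ $n=m=1$, $\A(X)=X$, $b=-1$ makes both systems inconsistent), and the paper does use it only under the running feasibility assumption for Algorithm~\ref{algo:fr}. No gaps.
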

The first alternative is just the Slater CQ, while if the second
alternative holds, then $\A^*(y)$ is an exposing vector for a face
containing $\F$.  We may use a basis for $\nul(\A^*(y))$ to obtain a smaller \SDP.  If the Slater CQ holds for the new \SDP we have obtained \SDPR, otherwise, we find an exposing vector and reduce the problem again.  We outline the facial reduction procedure in Algorithm~\ref{algo:fr}.  At each iteration, the dimension of the problem is reduced by at least one, hence this approach is bound to obtain \SDPR in at most $n-1$ iterations, assuming that the initial problem is feasible.  If at each iteration the
exposing vector obtained is of maximal rank then the number of
iterations required to obtain \SDPR is referred to as the \emph{singularity 
degree}, \cite{S98lmi}.  For a non-empty spectrahedron, $\F$, we denote the singularity
degree as $\sd=\sd(\F)$.  
\index{singularity degree, $\sd=\sd(\F)$}
\index{$\sd=\sd(\F)$, singularity degree}
\begin{algorithm}
\label{algo:fr}
\caption{Facial reduction procedure using the theorem of the alternative.}
\begin{algorithmic}
\STATE Initialize $S_i$ so that $(\A(X))_i = \langle S_i,X \rangle$ for $i \in \{1,\dotso,m\}$
\WHILE{Item 2. of Theorem~\ref{thm:alternative}}
\STATE {obtain exposing vector $W$}
\STATE {$W = \begin{bmatrix}
U & V \end{bmatrix}\begin{bmatrix}
D & 0 \\
0 & 0 \end{bmatrix}\begin{bmatrix}
U & V\end{bmatrix} , \quad D \succ 0$}
\STATE {$S_i \leftarrow V^TS_iV, \quad i \in \{1,\dotso, m\}$}
\ENDWHILE
\end{algorithmic}
\end{algorithm}

We remark that any algorithm pursuing the minimal face through exposing vectors of the form $\A^*(\cdot)$, must perform at least as many iterations as the singularity degree.  The singularity degree could be as large as the trivial upper bound $n-1$ as is seen in the example of \cite{MR2724357}.  Thus facial reduction may be very expensive computationally.  On the other hand, from Theorem~\ref{thm:face} we see that $\face(\F)$ is fully characterized by the range of any of its relative interior matrices.  That is, from any solution to Problem~\ref{prob:main} we may obtain the regularized problem \SDPR.

\section{A Parametric Optimization Approach}
\label{sec:paramprob}
In this section we present a parametric optimization problem that solves Problem~\ref{prob:main}. 
\begin{assump}
\label{assump:main}
We make the following assumptions:
\begin{enumerate}
\item $\A$ is surjective,
\item $\F$ is non-empty, bounded and contained in a proper face of $\Snp$.
\end{enumerate}
\end{assump}
The assumption on $\A$ is a standard regularity assumption and so is the
non-emptiness assumption on $\F$.  The necessity of $\F$ to be bounded
will become apparent throughout this section, however, our approach may
be applied to unbounded spectrahedra as well.  We discuss such extensions in Section~\ref{sec:unbounded}.  The assumption that $\F$ is contained in a proper face of $\Snp$ restricts our discussion to those instances of \SDP that are interesting with respect to facial reduction.

In the 
following lemma are stated two useful characterizations of 
bounded spectrahedra.
\begin{lemma}
\label{lem:boundedchar}
The following holds:
\[
\F \text{ is bounded} \ \iff \ \nul(\A) \cap \Snp= \{0\} \ \iff \range(\A^*) \cap \Snpp \ne \emptyset.
\]
\end{lemma}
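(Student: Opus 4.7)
The plan is to establish the two equivalences in sequence, using recession cones for the first and a theorem of the alternative for the second.

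For the first equivalence, I would identify the recession cone of $\F$. Since $\F = \{X : \A(X) = b\} \cap \Snp$ is the intersection of a nonempty closed convex affine set and a closed convex cone, and since both recession cones can be read off directly (the recession cone of the affine set $\{X : \A(X) = b\}$ is $\nul(\A)$, and the recession cone of $\Snp$ is $\Snp$ itself), I would argue that $\F^{\infty} = \nul(\A) \cap \Snp$ as given in \eqref{eq:recession}. A nonempty closed convex set is bounded precisely when its recession cone is trivial, so $\F$ bounded $\iff \nul(\A)\cap \Snp = \{0\}$.

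For the easy direction of the second equivalence, suppose $\A^*(y) \in \Snpp$ for some $y$. If $X \in \nul(\A) \cap \Snp$, then $\langle X, \A^*(y)\rangle = \langle \A(X), y\rangle = 0$, and by \eqref{eq:innerprodmatrixprod} we get $X \cdot \A^*(y) = 0$; since $\A^*(y)$ is invertible, $X = 0$.

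For the reverse direction, I would argue by contradiction and separation. Suppose $\nul(\A)\cap \Snp = \{0\}$ but $\range(\A^*) \cap \Snpp = \emptyset$. Then $\range(\A^*)$ is a subspace disjoint from the nonempty open convex cone $\Snpp$, so by the standard separation theorem for convex sets there exists $0\ne X \in \Sn$ such that $\langle X, \A^*(y)\rangle \le \langle X, W\rangle$ for every $y \in \Rm$ and every $W \in \Snpp$. Linearity in $y$ forces $\langle X, \A^*(y)\rangle = 0$ for all $y$, i.e., $\A(X) = 0$. Taking $W \to V$ for arbitrary $V \in \Snp$ yields $\langle X, V\rangle \ge 0$ for all $V \succeq 0$, hence by self-duality of $\Snp$ we have $X \in \Snp$. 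This contradicts $\nul(\A)\cap\Snp = \{0\}$, completing the proof. Alternatively, the same conclusion may be derived from \eqref{eq:dualintersection} by computing $(\nul(\A) \cap \Snp)^+ = \cl(\range(\A^*) + \Snp) = \Sn$ and using density together with openness of $\Snpp$.

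The main obstacle is the reverse direction of the second equivalence, since it requires a nontrivial separation or closure argument rather than a direct computation; the other steps are essentially bookkeeping of definitions from the preceding background section.
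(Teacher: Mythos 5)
Your proof is correct. The argument for the first equivalence (via recession cones of an intersection, using $\F^\infty=\nul(\A)\cap\Snp$ and the fact that a nonempty closed convex set is bounded iff its recession cone is trivial) and the ``easy'' direction of the second equivalence (via $\langle X,\A^*(y)\rangle = \langle \A(X),y\rangle$ and \eqref{eq:innerprodmatrixprod}) coincide with the paper's. Where you diverge is the ``hard'' direction ($\nul(\A)\cap\Snp=\{0\}\Rightarrow\range(\A^*)\cap\Snpp\ne\emptyset$): the paper argues in the forward direction through dual cones, passing from $(\nul(\A)\cap\Snp)^+=\Sn$ to $\range(\A^*)+\Snp=\Sn$, which requires dropping the closure in \eqref{eq:dualintersection} and hence an auxiliary claim that the Minkowski sum $\nul(\A)^\perp+\Snp$ is closed; it then exhibits $-X = I+Y \succ 0$ directly. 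You instead argue contrapositively via hyperplane separation of the subspace $\range(\A^*)$ from the open cone $\Snpp$, reading off a nonzero separating functional $X \in \nul(\A) \cap \Snp$. Your route is more elementary and, notably, sidesteps the cone-sum closedness subtlety entirely — it needs only the standard separation theorem for disjoint convex sets (one open) and self-duality of $\Snp$. Your parenthetical ``alternative'' via density is sketchier: $\cl(\range(\A^*)+\Snp)=\Sn$ means the sum meets every nonempty open set, but to extract an element of $\range(\A^*)\cap\Snpp$ you should intersect with $-\Snpp$ (writing $R+S\prec 0$ with $R\in\range(\A^*)$, $S\succeq 0$ gives $-R=S-(R+S)\succ 0$); intersecting with $\Snpp$ does not immediately yield the conclusion. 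Since your primary separation argument is complete on its own, this does not affect correctness.
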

\begin{proof}
For the first equivalence, $\F$ is bounded if and only if $\F^{\infty} =
	\{0\}$ by Theorem~8.4 of \cite{con:70}.  It suffices, therefore,
	to show that $\F^{\infty} = \nul(\A) \cap \Snp$.  It is easy to
	see that $(\Snp)^{\infty} = \Snp$ and that the recession cone of
	the affine manifold defined by $\A$ and $b$ is $\nul(\A)$.  By
	Corollary~8.3.3 of \cite{con:70} the recession cone of the intersection of convex sets is the intersection of the respective recession cones, yielding the desired result.

Now let us consider the second equivalence.  For the forward direction, observe that
\begin{align*}
\nul(\A) \cap \Snp = \{0\} \ &\iff \ \left( \nul(\A) \cap \Snp \right)^+ = \{0 \}^+, \\
& \iff \ \nul(\A)^{\perp} + \Snp = \Sn, \\
& \iff \ \range(\A^*) + \Snp = \Sn.
\end{align*}
The second inequality is due to \eqref{eq:dualintersection} and one can verify that in this case $\nul(\A)^{\perp} \cap \Snp$ is closed.  Thus there exists $X \in \range(\A^*)$ and $Y \in \Snp$ such that $X+Y=-I$.  Equivalently, $-X = I + Y \in \Snpp$.  For the converse, let $X \in \range(\A^*) \cap \Snpp$ and suppose $0\ne S \in \nul(\A) \cap \Snp$.  Then $\langle X,S \rangle = 0$ which implies, by \eqref{eq:innerprodmatrixprod}, that $XS = 0$.  But then $\nul(X) \ne \{0\}$, a contradiction.
\end{proof}
Let $r$ denote the maximal rank of any matrix in $\relint (\F)$ and let the columns of $V \in \R^{n\times r}$ form a basis for its range.  In seeking a relative interior point of $\F$ we define a specific point from which we develop a parametric optimization problem.   
\index{analytic center of $\F$, $\hat{X}$}
\index{$\hat{X}$, analytic center of $\F$}
\begin{definition}[analytic center]
\label{def:analytic}
The analytic center of $\F$ is the unique matrix $\hat{X}$ satisfying
\begin{equation}
\label{eq:analytic}
\hat{X} = \arg \max \{ \log \det (V^TXV) : X \in \F \}.
\end{equation}
\end{definition}
Under Assumption~\ref{assump:main} the analytic center is well-defined
and this follows from the proof of Theorem~\ref{thm:maxdet}, below.  It is easy to see that the analytic center is indeed in the
relative interior of $\F$ and therefore a solution to Probelm~\ref{prob:main}.  However, the optimization problem from which it is derived is 
intractable due to the unknown matrix $V$.  If $V$ is simply removed from the optimization problem (replaced with the identity), then the problem is ill-posed since the objective does not take any 
finite values over the feasible set as it lies on the 
boundary of the \SDP cone.  To combat these issues, we propose replacing $V$ with $I$ and also perturbing $\F$ so that it intersects $\Snpp$.  The perturbation we choose is that of replacing $b$ with 
$\ba := b+ \alpha \A(I), \ \alpha>0$, thereby defining a family of spectrahedra
\[
\Fa := \{X \in \Snp : \A(X) = \ba \}.
\]
It is easy to see that if $\F \ne \emptyset$ then $\Fa$ has postive definite elements for every $\alpha >0$.  Indeed $\F + \alpha I \subset \Fa$.  Note that the affine manifold may be perturbed by any positive definite matrix and $I$ is chosen for simplicity.  We now consider the family of optimization problems
for $\alpha > 0$:
\begin{equation}
\label{eq:Palpha}
\Pa  \qquad \qquad \max \{ \log \det ( X) : X\in \Fa \}.
\end{equation}
It is well known that the solution to this problem exists and is unique for each $\alpha > 0$.  We include a proof in Theorem~\ref{thm:maxdet}, below.  Moreover, since $\face(\Fa) = \Snp$ for each $\alpha > 0$, the solution to $\Pa$ is in $\relint(\Fa)$ and is exactly the analytic center of $\Fa$.  The intuition behind our approach is that as the perturbation gets smaller, i.e., $\alpha \searrow 0$, the solution to $\Pa$ approaches the relative interior of $\F$.  This intuition is validated in Section~\ref{sec:convergence}.  Specifically, we show that the solutions to $\Pa$ form a smooth path that converges to $\bar{X} \in \relint(\F)$.  We also provide a sufficient condition for the limit point to be $\hat{X}$ in Section~\ref{sec:analyticcenter}.

We note that our approach of perturbing the spectrahedron in order to use the $\log \det(\cdot)$ function is not entirely new.  In \cite{fazelhindiboyd:01}, for instance, the authors perturb a convex feasible set in order to approximate the rank function using $\log \det(\cdot)$.  Unlike our approach, their perturbation is constant.

\subsection{Optimality Conditions}
We choose the strictly concave function $\log \det (\cdot)$ for its 
elegant optimality conditions, though the maximization is equivalent 
to maximizing only the determinant. We treat it as an \textdef{extended
valued} concave function that takes the value $-\infty$ if $X$ is singular.
For this reason we refer to both functions $\det(\cdot)$ and $\log \det (\cdot)$ equivalently
throughout our discussion.

Let us now consider the optimality conditions for the problem $\Pa$.
Similar problems have been thoroughly studied throughout the literature
in matrix completions and \SDPp, 
e.g.,~\cite{GrJoSaWo:84,MR2807419,SaVaWo:97,MR1614078}.  Nonetheless, we include a proof for 
completeness and to emphasize its simplicity.
\begin{theorem}[optimality conditions]
\label{thm:maxdet}
For every $\alpha >0$ there exists a
unique $\Xa \in \Fa \cap \Snpp$ such that 
\begin{equation}
\label{eq:maxlogdet}
\Xa =\arg \max \{ \log \det (X) : X \in \Fa \}.
\end{equation}
Moreover, $\Xa$ satisfies \eqref{eq:maxlogdet} if, and only if, there exists a unique $\ya \in \Rm$ and a unique $\Za \in \Snpp$ such that
\begin{equation}
\label{eq:optimalsystem}
\begin{bmatrix}
 \A^*(\ya)-\Za  \\
\A(\Xa) - \ba \\
\Za \Xa - I
\end{bmatrix} = 0.
\end{equation}
\end{theorem}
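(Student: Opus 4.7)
My plan is to establish the theorem in three stages: existence of a maximizer in $\Snpp$, uniqueness, and then the characterization by the KKT system.

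For existence, I would first check that $\Fa$ is compact. By Assumption~\ref{assump:main}, $\F$ is bounded, so by Lemma~\ref{lem:boundedchar}, $\nul(\A)\cap \Snp = \{0\}$; since this recession-cone condition depends only on $\A$, the same reasoning shows $\Fa$ is bounded, and hence compact (it is closed, being the intersection of closed sets). Next, $\Fa \cap \Snpp$ is non-empty: given any $X_0 \in \F$, the matrix $X_0 + \alpha I$ lies in $\Fa$ and is positive definite. Now treat $\log\det$ as an extended-valued upper semi-continuous function on $\Snp$ taking the value $-\infty$ on the boundary. Since $\Fa$ is compact and $\log\det$ achieves some finite value on the non-empty set $\Fa \cap \Snpp$, the supremum is attained, and any maximizer must lie in $\Fa \cap \Snpp$ (otherwise its value would be $-\infty$).

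Uniqueness follows from strict concavity of $\log\det$ on $\Snpp$: if $X_1, X_2 \in \Fa \cap \Snpp$ were two distinct maximizers, their midpoint would also lie in $\Fa \cap \Snpp$ (by convexity of $\Fa$) and would strictly exceed the maximum, a contradiction.

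For the optimality conditions, the key observation is that the PSD constraint is inactive at $\Xa$ since $\Xa \in \Snpp$. Therefore the Lagrangian of $\Pa$ reduces to $L(X,y) = \log\det(X) - \ip{y}{\A(X)-\ba}$ on $\Snpp$. Using $\nabla \log\det(X) = X^{-1}$, stationarity in $X$ gives $X^{-1} - \A^*(y) = 0$, i.e., setting $\Za := \Xa^{-1} \in \Snpp$ yields $\A^*(\ya) - \Za = 0$ together with $\Za \Xa = I$; primal feasibility is $\A(\Xa) = \ba$. Uniqueness of $\ya$ comes from the surjectivity of $\A$ in Assumption~\ref{assump:main}: the equation $\A^*(\ya) = \Za$ has at most one solution. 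Uniqueness of $\Za$ follows from $\Za = \Xa^{-1}$ and uniqueness of $\Xa$. Conversely, given any triple satisfying the system, concavity of the Lagrangian in $X$ (for fixed $y$) together with feasibility implies optimality in $\Pa$ by the standard sufficient-conditions argument for unconstrained maximization of a concave function.

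I expect the only real subtlety to be the argument that the maximizer sits in $\Snpp$ rather than on $\bdry(\Snp)$; once this is secured, the KKT derivation is routine because the inequality constraint becomes inactive and we are left with a smooth equality-constrained concave maximization. Everything else reduces to strict concavity of $\log\det$ and the two standing assumptions.
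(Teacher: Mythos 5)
Your proposal is correct and follows essentially the same route as the paper: establish compactness of $\Fa$ and nonemptiness of $\Fa\cap\Snpp$ so that the barrier property of $\log\det$ gives existence, use strict concavity for uniqueness, and then observe that the positive-definiteness constraint is inactive so that stationarity of the (linearly-constrained) Lagrangian yields $\Xa^{-1}=\A^*(\ya)=\Za$, with uniqueness of $\ya$ from surjectivity of $\A$. Your write-up is slightly more explicit than the paper's (you spell out the upper semicontinuity argument for attainment and state the converse direction of the equivalence), but the underlying argument is the same.
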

\begin{proof}
By Assumption~\ref{assump:main}, $\F \ne \emptyset$ and bounded and it 
follows that $\Fa \cap \Snpp \ne \emptyset$ and by Lemma~\ref{lem:boundedchar} 
it is bounded.  Moreover, $\log \det (\cdot)$ is a strictly concave function
over $\Fa \cap \Snpp$ (a so-called barrier function) and
\[
\lim_{\det(X)\to 0} \log \det (X) = -\infty.
\]
Thus, we conclude that the optimum $\Xa \in \Fa \cap \Snpp$ exists and is 
unique. The Lagrangian of problem \eqref{eq:maxlogdet} is 
\begin{align*}
\LL(X,y) &= \log \det(X) - \langle y, \A(X) - b\rangle \\
&= \log \det(X) - \langle \A^*(y), X \rangle + \langle y, b\rangle.
\end{align*}
Since the constraints are linear,
stationarity of the Lagrangian holds at $\Xa$.  Hence there exists $\ya \in \Rm$ 
such that $(\Xa)^{-1} = \A^*(\ya) =: \Za$.  Clearly $\Za$ is unique, and
since $\A$ is surjective, we conclude in addition that $\ya$ is unique.
\end{proof}

\subsection{The Unbounded Case}
\label{sec:unbounded}
Before we continue with the convergence results, we briefly address 
the case of unbounded spectrahedra.
The restriction to bounded spectrahedra is necessary in order to have
solutions to \eqref{eq:maxlogdet}.  There are certainly large families
of \SDPs where the assumption holds.  Problems arising from liftings of
combinatorial optimization problems often have the diagonal elements
specified, and hence bound the corresponding spectrahedron.  Matrix
completion problems are another family where the diagonal is often
specified.  Nonetheless, many \SDPs have unbounded feasible sets and we provide two methods for reducing such spectrahedra to bounded ones.  First, we show that the boundedness of $\F$ may be determined by solving a projection problem.
\begin{prop}
\label{prop:boundtest}
Let $\F$ be a spectrahedron defined by the affine manifold $\A(X) = b$ and let 
\[
P := \arg \min \ \{\lVert X - I \rVert_F : X\in \range(\A^*) \}.
\]
Then $\F$ is bounded if $P \succ 0$.
\end{prop}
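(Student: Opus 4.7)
The plan is to reduce this directly to Lemma~\ref{lem:boundedchar}, which already supplies the equivalence
\[
\F \text{ is bounded} \ \iff \ \range(\A^*) \cap \Snpp \ne \emptyset.
\]
So the only thing to verify is that the hypothesis $P \succ 0$ exhibits an explicit element of $\range(\A^*) \cap \Snpp$.

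First I would observe that $P$, being a minimizer of a strictly convex quadratic objective over the linear subspace $\range(\A^*) \subseteq \Sn$, exists, is unique, and lies in $\range(\A^*)$; in particular $P$ is symmetric by construction (no symmetrization step is needed, since $\range(\A^*)$ is already a subspace of $\Sn$). Next, the assumption $P \succ 0$ says $P \in \Snpp$. Combining the two memberships gives $P \in \range(\A^*) \cap \Snpp$, so this intersection is nonempty.

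Finally, I would apply the second equivalence of Lemma~\ref{lem:boundedchar} to conclude that $\F$ is bounded. No convergence arguments, duality, or analysis of the normal cone of the projection are required; the proposition is essentially a packaging of Lemma~\ref{lem:boundedchar} with the observation that the Frobenius projection of $I$ onto $\range(\A^*)$ is a canonical candidate for a positive definite element of $\range(\A^*)$.

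There is no real obstacle in the proof itself; the only subtlety worth flagging explicitly in the write-up is that the projection problem is a least-squares problem in a finite-dimensional subspace, so $P$ is well-defined, and that the inequality $P \succ 0$ is a sufficient (but in general not necessary) condition, since $\range(\A^*)$ could still meet $\Snpp$ even when its closest point to $I$ fails to be positive definite.
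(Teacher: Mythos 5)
Your proof is correct and follows the same route as the paper: note that $P$ is the well-defined projection of $I$ onto the subspace $\range(\A^*)$, so $P \succ 0$ immediately witnesses $\range(\A^*) \cap \Snpp \ne \emptyset$, and then invoke the second equivalence of Lemma~\ref{lem:boundedchar}. Your concluding remark that $P \succ 0$ is sufficient but not necessary is a correct and worthwhile observation, though not part of the paper's proof.
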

\begin{proof}
First we note that $P$ is well defined and a singleton since it is the projection of $I$ onto a closed convex set.  Now $P\succ 0$ implies that $\range(\A^*) \cap \Snpp \ne
\emptyset$ and by Lemma~\ref{lem:boundedchar} this is equivalent to $\F$
bounded.
\end{proof}
The proposition gives us a sufficient condition for $\F$ to be bounded.  Suppose this condition is not satisfied, but we have knowledge of some matrix $S \in \F$.  Then for $t > 0$, consider the spectrahedron
\[
\F' := \{ X \in \Sn : X\in \F, \ \trace(X) = \trace(S) + t \}.
\]
Clearly $\F'$ is bounded.  Moreover, we see that $\F' \subset \F$ and contains maximal rank elements of $\F$, hence $\face(\F') = \face(\F)$.  It follows that $\relint(\F') \subset \relint(\F)$ and we have reduced the problem to the bounded case.

Now suppose that the sufficient condition of the proposition does not hold and we do not have knowledge of a feasible element of $
F$.  In this case we detect recession
directions, elements of $\nul (\A) \cap \Snp$, and project to the orthogonal complement.  Specifically, if $\F$ is unbounded then $\Fa$ is unbounded and problem \eqref{eq:Palpha} is unbounded.  Suppose, we have detected unboundedness, i.e., we have $X \in \F(\alpha)\cap \Snp$ with large norm.  Then $X = S_0 + S$ with $S \in \nul (\A) \cap \Snp$ and $\lVert S \rVert \gg \lVert S_0 \rVert$.  We then restrict $\F$ to the orthogonal complement of $S$, that is, we consider the new spectrahedron
\[
\F' := \{X\in \Sn : X\in \F, \ \langle S,X\rangle = 0\}.
\]
By repeated application, we eliminate a basis for the recession directions and obtain a bounded spectrahedron.  From any of the relative interior points of this spectrahedron, we may obtain a relative interior point for $\F$ by adding to it the recession directions obtained throughout the reduction process.

\subsection{Convergence to the Relative Interior and Smoothness}
\label{sec:convergence}

By simple inspection it is easy to see that $(\Xa,\ya,\Za)$, as in \eqref{eq:optimalsystem}, does not converge as $\alpha \searrow 0$.  Indeed, under Assumption~\ref{assump:main},
\[
\lim_{\alpha \searrow 0} \lambda_n(\Xa) \rightarrow 0 \ \implies \ \lim_{\alpha \searrow 0} \lVert \Za \rVert_2 \rightarrow +\infty.
\]
It is therefore necessary to scale $\Za$ so that it remains bounded.  Let us look at an example.

\begin{example} Consider the matrix completion problem: find $X \succeq 0$ having the form 
$$ \begin{pmatrix} 1 & 1 &  ? \cr
 1 & 1 & 1  \cr ? & 1 & 1 \end{pmatrix}. $$
The set of solutions is indeed a spectrahedron with ${\mathcal A}$ and $b$ given by
$$ {\mathcal A} \left( \begin{bmatrix} x_{11} & x_{12} & x_{13} \cr
 x_{12} & x_{22}  &  x_{23}  \cr x_{13}  &  x_{23} &  x_{33} \end{bmatrix} \right) := \begin{pmatrix} x_{11} \cr x_{12} \cr x_{22} \cr x_{23} \cr x_{33} \end{pmatrix},\   b := \begin{pmatrix} 1 \cr
 1 \cr 1 \cr 1 \cr 1\end{pmatrix}. $$
In this case, it is not difficult to obtain
$$ X(\alpha ) = \begin{pmatrix}1+\alpha & 1 & \frac{1}{1+\alpha} \cr 1 & 1+\alpha & 1\cr \frac{1}{1+\alpha} &  1 & 1+\alpha \end{pmatrix},$$ with inverse
$$  X(\alpha)^{-1} =\frac{1}{\alpha(2+\alpha)} \begin{pmatrix}1+\alpha & -1 & 0 \cr -1 & \frac{\alpha^2+2\alpha+2}{ 1+\alpha} & -1\cr 0 & -1 & 1+\alpha \end{pmatrix}. $$
Clearly $ \lim_{\alpha \searrow 0} \lVert  X(\alpha)^{-1}  \rVert_2 \rightarrow +\infty.$
However, when we consider $\alpha X(\alpha)^{-1}$, and take the limit as $\alpha$ goes to 0 we obtain the
bounded limit 
$$ \bar{Z} = \begin{pmatrix} \frac12 & -  \frac12  & 0 \cr  - \frac12 & 1 & - \frac12 \cr  0 &- \frac12 &  \frac12 \end{pmatrix}. $$
Note that $\bar{X}= X(0)$ is the $3\times 3$ matrix with all ones, ${\rm rank} \bar{X}+ {\rm rank} \bar{Z}= 3$, and $\bar{X} \bar{Z} = 0$.
\end{example} 

It turns out that multiplying  $\Xa^{-1}$ by $\alpha$ always bounds the sequence $(\Xa,\ya,\Za)$.  Therefore, we consider the scaled system
\begin{equation}
\label{eq:scaledoptimality}
\begin{bmatrix}
\A^*(y) - Z \\
\A(X) - \ba \\
ZX - \alpha I
\end{bmatrix} = 0, \ X \succ 0, \ Z \succ 0, \ \alpha > 0,
\end{equation}
that is obtained from \eqref{eq:optimalsystem} by multiplying the last equation by $\alpha$. Abusing our previous notation, we let $(\Xa,\ya,\Za)$ denote a solution 
to \emph{this} system and we refer to the set of all such solutions as the
\textdef{parametric path}.  The parametric path has clear parallels to the \emph{central path} of \SDP, however, it differs in one main respect: it is not contained in the relative interior of $\F$.  In the main theorems of this section we prove that the parametric path is smooth and converges as $\alpha\searrow 0$ with the primal limit point in $\relint(\F)$.  We begin by showing that the primal component of the parametric path has cluster points.
\begin{lemma}
\label{lem:primalconverge}
Let $\bar{\alpha}> 0$.  For every sequence $\{\alpha_k\}_{k\in \NN} \subset (0,\bar{\alpha}]$ such that $\alpha_k \searrow 0$, there exists a subsequence $\{\alpha_l \}_{l\in \NN}$ such that  $X(\alpha_l) \rightarrow \bar{X} \in \F$.
\end{lemma}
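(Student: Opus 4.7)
The plan is to reduce the lemma to two standard facts: uniform boundedness of the family $\{X(\alpha)\}_{\alpha \in (0,\bar\alpha]}$ in the Frobenius norm, and then Bolzano--Weierstrass together with closedness of $\Snp$ and continuity of $\A$.

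The main step is to prove uniform boundedness of $\{X(\alpha_k)\}$. For this I would use Lemma~\ref{lem:boundedchar}: since $\F$ is bounded, there exists $W \in \range(\A^*) \cap \Snpp$, say $W = \A^*(z)$ for some $z \in \Rm$. For any $X \in \Fa$ and any $\alpha \in (0,\bar\alpha]$,
\[
\langle W, X\rangle = \langle \A^*(z), X\rangle = \langle z, \A(X)\rangle = \langle z, b(\alpha)\rangle = \langle z, b\rangle + \alpha\, \langle z, \A(I)\rangle,
\]
which is bounded above by some constant $M$ depending only on $z$, $b$, $\A$, and $\bar\alpha$. Since $W \succ 0$ and $X \succeq 0$, the standard bound $\lambda_n(W)\, \trace(X) \le \langle W, X\rangle$ yields
\[
\trace(X) \le \frac{M}{\lambda_n(W)} =: M',
\]
and using $\|X\|_F \le \trace(X)$ for $X \in \Snp$ gives $\|X\|_F \le M'$, uniformly in $\alpha \in (0,\bar\alpha]$. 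In particular, $\{X(\alpha_k)\}$ lies in a compact set.

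Once uniform boundedness is in hand, the rest is routine: extract a convergent subsequence $X(\alpha_l) \to \bar X$ by Bolzano--Weierstrass, use closedness of $\Snp$ to conclude $\bar X \succeq 0$, and pass to the limit in the feasibility equation
\[
\A(X(\alpha_l)) = b + \alpha_l\, \A(I)
\]
using continuity of $\A$ and $\alpha_l \searrow 0$ to obtain $\A(\bar X) = b$. Thus $\bar X \in \F$.

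The only real obstacle is the uniform bound; it is not immediate from the fact that each individual $\Fa$ is bounded, because the bound a priori could blow up as $\alpha \searrow 0$. The trick is that boundedness is a property of $\nul(\A) \cap \Snp$, so the separating positive definite element $W \in \range(\A^*)$ provided by Lemma~\ref{lem:boundedchar} is independent of $\alpha$ and yields the needed uniform trace bound.
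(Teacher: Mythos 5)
Your proof is correct, but it takes a genuinely different route from the paper. The paper establishes boundedness of $\{X(\alpha_k)\}$ geometrically: it observes that $X(\alpha_k) + (\bar\alpha - \alpha_k)I \in \F(\bar\alpha)$ (since shifting by $(\bar\alpha-\alpha_k)I$ moves a point of $\F(\alpha_k)$ into $\F(\bar\alpha)$), so $\|X(\alpha_k)\|_2 \le \|X(\alpha_k) + (\bar\alpha-\alpha_k)I\|_2 \le \max_{X \in \F(\bar\alpha)} \|X\|_2 < \infty$, with no need for an explicit certificate. You instead pull the positive definite certificate $W = \A^*(z) \in \range(\A^*) \cap \Snpp$ out of Lemma~\ref{lem:boundedchar} and turn it into a uniform trace bound: since $\langle W, X(\alpha)\rangle = \langle z, b(\alpha)\rangle$ depends on $\alpha$ only through $b(\alpha) = b + \alpha\A(I)$, which is bounded on $(0,\bar\alpha]$, and since $\lambda_n(W)\trace(X) \le \langle W, X\rangle$ for $X\succeq 0$, you get $\trace(X(\alpha)) \le M/\lambda_n(W)$ uniformly. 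Both arguments hinge on the same structural fact --- that boundedness of $\Fa$ is governed by $\nul(\A)\cap\Snp$, independently of $\alpha$ --- but the paper's version is a little slicker (a single monotonicity of $\|\cdot\|_2$ under PSD perturbation), while yours is more quantitative and makes the role of the dual certificate explicit, which is a nice complement to the way Lemma~\ref{lem:technicalbounded} and Theorem~\ref{thm:2paramcluster} later use eigenvalue inequalities of the same flavor. The closing steps (Bolzano--Weierstrass, closedness of $\Snp$, continuity of $\A$) match what the paper leaves implicit in ``that clearly belongs to $\F$.''
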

\begin{proof}
Let $\bar{\alpha}$ and $\{\alpha_k\}_{k\in \NN}$ be as in the hypothesis.  First we show that the sequence $X(\alpha_k)$ is bounded.  For any $k \in \NN$ we have
\[
\lVert X(\alpha_k) \rVert_2 \le \lVert X(\alpha_k)+ (\bar{\alpha} - \alpha_k) I \rVert_2 \le \max_{X\in \F(\bar{\alpha})} \lVert X\rVert_2 < +\infty.
\]
The second inequality is due to $X(\alpha_k) + (\bar{\alpha} - \alpha_k)I \in \F(\bar{\alpha})$ and the third inequality holds since $\F(\bar{\alpha})$ is bounded.
 Thus there exists a convergent subsequence $\{\alpha_l\}_{l\in \NN}$
with $X(\alpha_l) \rightarrow \bar{X}$, that clearly belongs to $\F$.
\end{proof}
For the dual variables we need only prove that $Z(\alpha)$ converges
(for a subseqence) since this implies that $y(\alpha)$ also converges,
by the assumption that $\A$ is surjective.  As for $\Xa$, we show that
the tail of the parametric path corresponding to $Z(\alpha)$ is bounded.
To this end, we first prove the following technical lemma.  Recall that
$\hat{X}$ is the analytic center of Definition~\ref{def:analytic}.
\begin{lemma}
\label{lem:technicalbounded}
Let $\bar{\alpha} > 0$.  There exists $M > 0$ such that for all $ \alpha \in (0,\bar{\alpha}]$,
\[
0 < \langle X(\alpha)^{-1}, \hat{X} + \alpha I \rangle \le M.
\]
\end{lemma}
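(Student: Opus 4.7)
The plan is to show that this inner product is in fact identically equal to a constant (namely $n$), so any $M \ge n$ suffices. The key observation is that $\hat{X} + \alpha I$ is feasible for the perturbed spectrahedron $\Fa$, and combining this with the optimality conditions and self-adjointness of $\A$ will collapse the inner product.

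First I would record that $\hat{X} \in \F$ by Definition~\ref{def:analytic}, so $\A(\hat{X}) = b$, and hence
\[
\A(\hat{X} + \alpha I) \;=\; b + \alpha \A(I) \;=\; \ba \;=\; \A(X(\alpha)).
\]
So $\hat{X} + \alpha I - X(\alpha) \in \nul(\A)$. Next I would invoke the optimality conditions. For the scaled system~\eqref{eq:scaledoptimality} we have $Z(\alpha) = \A^*(y(\alpha))$ and $Z(\alpha)X(\alpha) = \alpha I$, so in particular $X(\alpha)^{-1} = \tfrac{1}{\alpha}\A^*(y(\alpha)) \in \range(\A^*)$.

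Using this together with the adjoint identity yields
\[
\langle X(\alpha)^{-1},\, \hat{X} + \alpha I - X(\alpha) \rangle
\;=\; \tfrac{1}{\alpha}\langle y(\alpha),\, \A(\hat{X} + \alpha I - X(\alpha))\rangle
\;=\; 0,
\]
so that
\[
\langle X(\alpha)^{-1},\, \hat{X} + \alpha I \rangle
\;=\; \langle X(\alpha)^{-1},\, X(\alpha)\rangle
\;=\; \trace(I) \;=\; n.
\]
Finally, strict positivity is immediate from $X(\alpha)^{-1} \in \Snpp$ and $\hat{X} + \alpha I \in \Snpp$ (positive definite because $\alpha > 0$), so their trace inner product is strictly positive. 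Thus $M = n$ works for every $\alpha \in (0,\bar{\alpha}]$, with no dependence on $\bar\alpha$ at all.

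There is essentially no obstacle here: the whole argument is a one-line complementary-slackness computation once we notice that $\hat{X} + \alpha I$ and $X(\alpha)$ share the same image under $\A$. The only thing to be careful about is the scaling convention, i.e., that the optimality system we use is~\eqref{eq:scaledoptimality} rather than~\eqref{eq:optimalsystem}; either works, but in the unscaled version one has $X(\alpha)^{-1} = \A^*(y(\alpha))$ directly, giving the same identity $\langle X(\alpha)^{-1}, \hat{X}+\alpha I\rangle = n$.
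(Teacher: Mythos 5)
Your proof is correct, and it is sharper and considerably more direct than the argument given in the paper. You observe that $\hat{X} + \alpha I$ and $X(\alpha)$ both lie in the same affine slice $\{X : \A(X) = \ba\}$, so their difference lies in $\nul(\A)$, while the scaled optimality condition forces $X(\alpha)^{-1} = \tfrac{1}{\alpha}\A^*(y(\alpha)) \in \range(\A^*) = \nul(\A)^{\perp}$; pairing the two gives the identity
\[
\langle X(\alpha)^{-1}, \hat{X} + \alpha I \rangle = \langle X(\alpha)^{-1}, X(\alpha)\rangle = n
\]
for every $\alpha > 0$. The paper's proof is built on the same orthogonality mechanism but routes it through the auxiliary point $X(\bar{\alpha})$: it expands $\langle X(\bar{\alpha})^{-1} - X(\alpha)^{-1},\, \hat{X} + \bar{\alpha}I - X(\alpha)\rangle$ two ways, rearranges to get \eqref{eq:boundednessthird}, and then bounds each term to obtain an $M$ that depends on $\bar{\alpha}$, $\hat{X}$, and $\trace(X(\bar{\alpha})^{-1})$. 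In fact, if one applies your orthogonality observation once more to the right-hand side of \eqref{eq:boundednessthird} --- since $\hat{X} + \alpha I - X(\alpha) \in \nul(\A)$ and $X(\bar{\alpha})^{-1} \in \range(\A^*)$, one has $\langle X(\bar{\alpha})^{-1}, X(\alpha)\rangle = \langle X(\bar{\alpha})^{-1}, \hat{X}\rangle + \alpha\trace(X(\bar{\alpha})^{-1})$ --- the paper's expression also collapses to exactly $n$, so the two proofs are internally consistent. What your version buys is a cleaner statement (an equality, with the explicit constant $M=n$ independent of $\bar{\alpha}$) and the elimination of the detour through $\bar{\alpha}$; this also simplifies the downstream use in Theorem~\ref{thm:2paramcluster}, where $M$ appears in the eigenvalue bounds.
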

\begin{proof}
Let $\bar{\alpha}$ be as in the hypothesis and let $\alpha \in (0,\bar{\alpha}]$.  The first inequality is trivial since both of the matrices are positive definite.  For the second inequality, we have,
\begin{equation}
\label{eq:boundednessfirst}
\begin{split}
\langle X(\bar{\alpha})^{-1} - X(\alpha)^{-1}, \hat{X} + \bar{\alpha}I - X(\alpha) \rangle &= \langle \frac{1}{\bar{\alpha}}\A^*(y(\bar{\alpha})) - \frac{1}{\alpha}\A^*(y(\alpha)),   \hat{X} + \bar{\alpha}I - X(\alpha) \rangle, \\
&= \langle \frac{1}{\bar{\alpha}}y(\bar{\alpha}) - \frac{1}{\alpha}y(\alpha),   \A(\hat{X} + \bar{\alpha}I) - \A(X(\alpha)) \rangle, \\
&= \langle \frac{1}{\bar{\alpha}}y(\bar{\alpha}) - \frac{1}{\alpha}y(\alpha),   (\bar{\alpha} - \alpha) \A(I) \rangle, \\
&= \langle X(\bar{\alpha})^{-1} - X(\alpha)^{-1},   (\bar{\alpha} - \alpha) I \rangle, \\
&= (\bar{\alpha} - \alpha)\trace(X(\bar{\alpha})^{-1}) - \langle X(\alpha)^{-1},   (\bar{\alpha} - \alpha) I \rangle.
\end{split}
\end{equation}
On the other hand,
\begin{equation}
\label{eq:boundednesssecond}
\begin{split}
\langle X(\bar{\alpha})^{-1} - X(\alpha)^{-1}, \hat{X} + \bar{\alpha}I - X(\alpha) \rangle &= n + \langle X(\bar{\alpha})^{-1}, \hat{X} \rangle + \bar{\alpha}\trace(X(\bar{\alpha})^{-1}) \\ 
& \qquad \qquad - \langle X(\bar{\alpha})^{-1}, X(\alpha) \rangle - \langle X(\alpha)^{-1}, \hat{X} + \bar{\alpha} I \rangle.
\end{split}
\end{equation}
Combining \eqref{eq:boundednessfirst} and \eqref{eq:boundednesssecond} we get
\begin{align*}
(\bar{\alpha} - \alpha)\trace(X(\bar{\alpha})^{-1}) - \langle X(\alpha)^{-1},   (\bar{\alpha} - \alpha) I \rangle &= n + \langle X(\bar{\alpha})^{-1}, \hat{X} \rangle + \bar{\alpha}\trace(X(\bar{\alpha})^{-1}) \\ 
& \qquad \qquad - \langle X(\bar{\alpha})^{-1}, X(\alpha) \rangle - \langle X(\alpha)^{-1}, \hat{X} + \bar{\alpha} I \rangle.
\end{align*}
After rearranging, we obtain
\begin{equation}
\label{eq:boundednessthird}
\begin{split}
\langle X(\alpha)^{-1}, \hat{X} + \alpha I \rangle &= n + \langle X(\bar{\alpha})^{-1}, \hat{X} \rangle + \bar{\alpha}\trace(X(\bar{\alpha})^{-1})- \langle X(\bar{\alpha})^{-1}, X(\alpha) \rangle \\ 
& \qquad \qquad - (\bar{\alpha} - \alpha)\trace(X(\bar{\alpha})^{-1}), \\
&= n + \alpha \trace(X(\bar{\alpha})^{-1}) + \langle X(\bar{\alpha})^{-1}, \hat{X} \rangle - \langle X(\bar{\alpha})^{-1}, X(\alpha) \rangle.
\end{split}
\end{equation}
The first and the third terms of the right hand side are positive constants.  The second term is positive for every value of $\alpha$ and is bounded above by $\bar{\alpha}\trace(X(\bar{\alpha})^{-1})$ while the fourth term is bounded above by 0.  Applying these bounds as well as the trivial lower bound on the left hand side, we get
\begin{equation}
\label{eq:boundednessfourth}
0 < \langle X(\alpha)^{-1}, \hat{X} + \alpha I \rangle \le n + \bar{\alpha}\trace(X(\bar{\alpha})^{-1})+ \langle X(\bar{\alpha})^{-1}, \hat{X} \rangle =: M.
\end{equation}
\end{proof}
We need one more ingredient to prove that the parametric path
corresponding to $\Za$ is bounded. This involves bounding the trace
inner product above and below by the \textdef{maximal and minimal scalar 
products} of the eigenvalues, respectively.
\begin{lemma}[Ky-Fan \cite{Fan:50}, Hoffman-Wielandt \cite{hw53}]
\label{lem:eigenvaluebound}
If $A,B \in \Sn$, then
\[
\sum_{i=1}^n \lambda_i(A)\lambda_{n+1-i}(B) \le \langle A, B \rangle \le \sum_{i=1}^n \lambda_i(A)\lambda_i(B).
\]
\end{lemma}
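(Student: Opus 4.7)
The plan is to reduce both inequalities to the classical rearrangement inequality via Birkhoff's theorem on doubly stochastic matrices. First I would write orthogonal spectral decompositions $A = P\Lambda_A P^T$ and $B = Q\Lambda_B Q^T$ with $\Lambda_A = \Diag(\lambda(A))$, $\Lambda_B = \Diag(\lambda(B))$, the eigenvalues listed in decreasing order. Using the cyclic property of the trace,
\[
\langle A, B \rangle = \trace(P\Lambda_A P^T Q \Lambda_B Q^T) = \trace(\Lambda_A\, U \Lambda_B U^T),
\]
where $U := P^T Q$ is orthogonal. Expanding the right-hand side entrywise gives
\[
\langle A, B \rangle = \sum_{i,j=1}^n \lambda_i(A)\,\lambda_j(B)\, u_{ij}^2.
\]

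Next I would observe that the matrix $S$ with entries $s_{ij} := u_{ij}^2$ is doubly stochastic: orthogonality of $U$ forces each row and each column of $S$ to sum to $1$, and clearly $s_{ij} \ge 0$. By Birkhoff's theorem, $S$ lies in the convex hull of the $n\times n$ permutation matrices, so there exist nonnegative weights $\mu_\pi$ with $\sum_\pi \mu_\pi = 1$ (the sum taken over permutations $\pi$ of $\{1,\dots,n\}$) such that
\[
\langle A, B \rangle = \sum_\pi \mu_\pi \sum_{i=1}^n \lambda_i(A)\,\lambda_{\pi(i)}(B).
\]
Thus $\langle A,B\rangle$ is a convex combination of the values of the inner product between the sorted eigenvalue vector of $A$ and permuted versions of the eigenvalue vector of $B$.

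Finally, I would invoke the (scalar) rearrangement inequality: for any two real sequences $a_1 \ge \cdots \ge a_n$ and $b_1 \ge \cdots \ge b_n$ and any permutation $\pi$,
\[
\sum_{i=1}^n a_i b_{n+1-i} \;\le\; \sum_{i=1}^n a_i b_{\pi(i)} \;\le\; \sum_{i=1}^n a_i b_i.
\]
Applying this with $a_i = \lambda_i(A)$, $b_i = \lambda_i(B)$ bounds every term of the convex combination simultaneously, and the same bounds transfer to the convex combination itself, yielding the stated two-sided inequality. The only step that requires care is the verification that $S$ is doubly stochastic and the invocation of Birkhoff's theorem; both are standard, so I do not anticipate a genuine obstacle, and the whole argument should fit in a few lines.
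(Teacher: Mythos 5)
Your proof is correct. The paper itself does not prove this lemma; it is stated with citations to Ky Fan and Hoffman--Wielandt as a classical result, so there is no in-paper argument to compare against. Your route via the doubly stochastic matrix $S=(u_{ij}^2)$, Birkhoff's theorem, and the scalar rearrangement inequality is a standard and clean way to obtain both bounds simultaneously. Two small sanity checks worth making explicit: the identity $\trace(\Lambda_A U \Lambda_B U^T)=\sum_{i,j}\lambda_i(A)\lambda_j(B)u_{ij}^2$ indeed follows from $[\Lambda_A U \Lambda_B U^T]_{ii}=\lambda_i(A)\sum_j u_{ij}^2\lambda_j(B)$, and the rearrangement inequality you invoke is valid for arbitrary (possibly negative) real sequences, which matters here since eigenvalues of symmetric matrices need not be nonnegative. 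For context, the original sources take different paths: Ky Fan's upper bound is usually derived from the variational characterization $\sum_{i\le k}\lambda_i(A)=\max\{\trace(V^TAV):V^TV=I_k\}$, and Hoffman--Wielandt is typically phrased as a Frobenius-norm perturbation bound equivalent to your lower inequality; the Birkhoff argument is a unified, elementary treatment of both, at the cost of appealing to Birkhoff's theorem, whose standard proof is itself nontrivial.
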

We now have the necessary tools for proving boundedness and obtain the
following convergence result.
 \begin{theorem}
 \label{thm:2paramcluster}
Let $\bar{\alpha} >0$.  For every sequence $\{ \alpha_{k} \}_{k \in \NN} \subset (0,\bar{\alpha}]$ such that $\alpha_k \searrow 0$, there exists a subsequence $\{\alpha_{\ell}\}_{\ell \in \NN}$ such that 
 \[
 (X(\alpha_{\ell}),y(\alpha_{\ell}),Z(\alpha_{\ell})) \rightarrow
(\bar{X},\bar{y},\bar{Z}) \in \{\Snp \times \Rm \times \Snp \}
 \]
 with $\bar{X} \in \relint(\F)$ and $\bar{Z} = \A^*(\bar{y})$.
 \end{theorem}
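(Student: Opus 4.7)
The plan is to combine the primal cluster-point result of Lemma~\ref{lem:primalconverge} with a uniform operator-norm bound on the scaled dual iterate $Z(\alpha)$, which I will extract from Lemma~\ref{lem:technicalbounded} together with the Ky--Fan/Hoffman--Wielandt inequality in Lemma~\ref{lem:eigenvaluebound}. Boundedness of $y(\alpha)$ then comes for free from the surjectivity of $\A$. The most delicate step, which I postpone to the end, is certifying that the primal cluster point $\bar X$ lies in $\relint(\F)$; for this I plan to reuse Lemma~\ref{lem:technicalbounded} but with the opposite eigenvalue pairing in Lemma~\ref{lem:eigenvaluebound}, forcing the top $r$ eigenvalues of $X(\alpha)$ to stay uniformly away from zero.

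For the dual side, the complementarity equation in \eqref{eq:scaledoptimality} gives $Z(\alpha) = \alpha X(\alpha)^{-1}$, so multiplying Lemma~\ref{lem:technicalbounded} by $\alpha$ yields $\langle Z(\alpha), \hat X + \alpha I\rangle \le \alpha M$. Discarding the nonnegative summands with $i \ge 2$ in the lower bound of Lemma~\ref{lem:eigenvaluebound},
\[
\lambda_1(Z(\alpha))\,\lambda_n(\hat X + \alpha I) \le \langle Z(\alpha), \hat X + \alpha I\rangle \le \alpha M.
\]
Assumption~\ref{assump:main} forces $\F$ to lie in a proper face, so $r < n$ and $\hat X$ is singular, giving $\lambda_n(\hat X + \alpha I) = \alpha$. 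Dividing through produces the uniform bound $\lambda_1(Z(\alpha)) \le M$ on $(0,\bar\alpha]$. Combined with Lemma~\ref{lem:primalconverge}, Bolzano--Weierstrass delivers a subsequence $\alpha_\ell \searrow 0$ along which $(X(\alpha_\ell), Z(\alpha_\ell)) \to (\bar X, \bar Z) \in \Snp \times \Snp$. Since $\A$ is surjective, $\A^*$ is injective with a continuous left inverse on its range, so $Z(\alpha_\ell) = \A^*(y(\alpha_\ell))$ forces $y(\alpha_\ell) \to \bar y$ with $\bar Z = \A^*(\bar y)$.

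For $\bar X \in \relint(\F)$, Theorem~\ref{thm:face} reduces matters to $\rank(\bar X) = r$. Applying the lower bound of Lemma~\ref{lem:eigenvaluebound} to $\langle X(\alpha)^{-1}, \hat X + \alpha I \rangle$ and reindexing $j = n+1-i$,
\[
\sum_{j=1}^n \frac{\lambda_j(\hat X) + \alpha}{\lambda_j(X(\alpha))} \le \langle X(\alpha)^{-1}, \hat X + \alpha I \rangle \le M.
\]
Set $\mu := \lambda_r(\hat X) > 0$. Retaining only the summands with $j \le r$, each one gives $\mu / \lambda_j(X(\alpha)) \le M$, hence $\lambda_j(X(\alpha)) \ge \mu/M$ uniformly in $\alpha$. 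Passing to the limit, $\lambda_j(\bar X) \ge \mu/M > 0$ for $j = 1,\ldots,r$, so $\rank(\bar X) \ge r$; since $\bar X \in \F$ and $r$ is the maximum rank attained in $\F$, equality holds.

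The main obstacle I anticipate is that a single estimate (Lemma~\ref{lem:technicalbounded}) must do double duty: it has to control $\lambda_1(Z(\alpha))$ from above \emph{and} the bottom $r$ eigenvalues of $X(\alpha)^{-1}$ from above. The trick is that Lemma~\ref{lem:eigenvaluebound} provides two distinct lower bounds on the same trace inner product, one sharp when eigenvalues are in opposite order (pinning $\alpha\,\lambda_1(Z(\alpha))$ against $\alpha M$) and one sharp when they are aligned (pinning $\mu/\lambda_j(X(\alpha))$ against $M$ for each $j\le r$). Both estimates depend critically on the fact that under Assumption~\ref{assump:main} the matrix $\hat X$ has exactly $n-r$ zero eigenvalues while the remaining $r$ are bounded below by $\mu > 0$.
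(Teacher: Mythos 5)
Your proof is correct and follows essentially the same route as the paper: you use Lemma~\ref{lem:primalconverge} for a primal cluster point, combine Lemma~\ref{lem:technicalbounded} with the Hoffman--Wielandt lower bound of Lemma~\ref{lem:eigenvaluebound} to bound each individual eigenvalue term, extract the rank certificate from the $r$ summands where $\hat X$ has positive eigenvalues, and extract the $\| Z(\alpha)\|_2 \le M$ bound from the remaining index (using $\lambda_n(\hat X)=0$), finishing with surjectivity of $\A$ to transport convergence of $Z$ to convergence of $y$. The only cosmetic difference is bookkeeping: the paper keeps everything in terms of eigenvalues of $X(\alpha)^{-1}$ and reads off both the rank statement and $\|Z(\alpha)\|_2 = \alpha\lambda_1(X(\alpha)^{-1})$ from the single chain of inequalities, whereas you reindex via $\lambda_i(X^{-1})=1/\lambda_{n+1-i}(X)$ and run the $Z$-bound and the rank bound as two parallel applications of the same estimate; also, your closing remark about ``two distinct lower bounds'' from Lemma~\ref{lem:eigenvaluebound} is slightly misleading prose (both uses invoke the same reversed-pairing lower bound, applied once to $Z(\alpha)$ and once to $X(\alpha)^{-1}$), though the mathematics underneath is sound.
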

 \begin{proof}
Let $\bar{\alpha} > 0$ and $\{\alpha_k\}_{k\in \NN}$ be as in the hypothesis.  We may without loss of generality assume that $X(\alpha_k) \rightarrow \bar{X} \in \F$ due to Lemma~\ref{lem:primalconverge}.  Let $k \in \NN$.  Combining the upper bound of Lemma~\ref{lem:technicalbounded} with the lower bound of Lemma~\ref{lem:eigenvaluebound} we have 
\[
\sum_{i=1}^n \lambda_i(X(\alpha_{k})^{-1}) \lambda_{n+1-i}(\hat{X}+\alpha_{k} I) \le M.
\]
Since the left hand side is a sum of positive terms, the inequality applies to each term:
\[
\lambda_i(X(\alpha_{k})^{-1}) \lambda_{n+1-i}(\hat{X}+\alpha_{k} I) \le M, \quad \forall i \in \{1,\dotso,n\}.
\]
Equivalently,
\begin{equation}
\label{eq:dualconverge}
\lambda_i(X(\alpha_{k})^{-1}) \le \frac{M}{ \lambda_{n+1-i}(\hat{X}) + \alpha_{k}}, \quad \forall i \in \{1,\dotso,n\}.
\end{equation}
Now exactly $r$ eigenvalues of $\hat{X}$ are positive.  Thus for $i \in \{n-r+1,\dotso,n\}$ we have
\[
\lambda_i(X(\alpha_{k})^{-1}) \le \frac{M}{ \lambda_{n+1-i}(\hat{X}) + \alpha_{k}} \le \frac{M}{ \lambda_{n+1-i}(\hat{X})},
\]
and we conclude that the $r$ smallest eigenvalues of $X(\alpha_{k})^{-1}$ are bounded above.  Consequently, there are at least $r$ eigenvalues of $X(\alpha_{k})$ that are bounded away from 0 and $\rank(\bar{X}) \ge r$.  On the other hand $\bar{X} \in \F$ and $\rank(\bar{X}) \le r$ and it follows that $\bar{X} \in \relint (\F)$.

Now we show that $Z(\alpha_{k})$ is a bounded sequence.  Indeed, from \eqref{eq:dualconverge} we have
\[
\lVert Z(\alpha_{k}) \rVert_2 = \alpha_{k}\lambda_1(X(\alpha_{k})^{-1}) \le \alpha_{k}\frac{M}{ \lambda_n(\hat{X}) + \alpha_{k}} = \alpha_{k}\frac{M}{  \alpha_{k}} = M.
\]
The second to last equality follows from the assumption that $\hat{X} \in \Snp \setminus \Snpp$, i.e. $\lambda_n(\hat{X}) = 0$.  Now there exists a subsequence $\{\alpha_{\ell}\}_{\ell \in \NN}$ such that 
\[
Z(\alpha_{\ell}) \rightarrow \bar{Z}, \ X(\alpha_{\ell}) \rightarrow \bar{X}.
\]
Moreover, for each $\ell$, there exists a unique $y(\alpha_{\ell})\in \Rm$ such that $Z(\alpha_{\ell}) = \A^*(y(\alpha_{\ell}))$ and since $\A$ is surjective, there exists $\bar{y} \in \Rm$ such that $y(\alpha_{\ell}) \rightarrow \bar{y}$ and $\bar{Z} = \A^*(\bar{y})$.  Lastly, the sequence $Z(\alpha_{\ell})$ is contained in the closed cone $\Snp$ hence $\bar{Z} \in \Snp$, completing the proof. 
 \end{proof}
 
We conclude this section by proving that the parametric path is smooth
and has a limit point as $\alpha \searrow 0$.  Our proof relies on the
following lemma of Milnor and is motivated by an analogous proof for the
central path of \SDP in \cite{Halicka:01,HalickaKlerkRoos:01}.  Recall
that an \emph{algebraic set} is the solution set of a system of finitely many polynomial equations.
\begin{lemma}[Milnor \cite{mi68}]
\label{lem:milnor}
Let $\VV \subseteq \Rk$ be an algebraic set and $\UU \subseteq \Rk$ be an open set defined by finitely many polynomial inequalities.  Then if $0 \in \cl (\UU \cap \VV)$ there exists $\varepsilon > 0$ and a real analytic curve $p :[0,\varepsilon) \rightarrow \Rk$ such that $p(0)=0$ and $p(t) \in \UU \cap \VV$ whenever $t > 0$.
\end{lemma}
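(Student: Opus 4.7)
The statement is the classical curve selection lemma of real algebraic geometry, so the plan is to follow the standard route: localize, apply a semialgebraic triviality theorem to extract a continuous semialgebraic curve into $\UU \cap \VV$ tending to $0$, and then upgrade to analyticity via a Puiseux expansion.

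Setting $S := \UU \cap \VV$, the first step is to localize by intersecting with a small closed ball $B(0,\delta)$. Since $\UU$ is open semialgebraic and $\VV$ algebraic, $S \cap B(0,\delta)$ is semialgebraic by Tarski--Seidenberg and still has $0$ as a limit point. Next, apply Hardt's semialgebraic triviality theorem to the semialgebraic function $\rho : S \cap B(0,\delta) \to \R$ given by $\rho(x) = \|x\|^2$. This yields some $\varepsilon > 0$ and a semialgebraic homeomorphism
\[
S \cap \rho^{-1}((0,\varepsilon)) \;\cong\; F \times (0,\varepsilon),
\]
for a fixed fiber $F$. In particular $S \cap \rho^{-1}(r) \neq \emptyset$ for all $r \in (0,\varepsilon)$; selecting a connected component of $F$ and tracing it through the trivialization produces a continuous semialgebraic curve $\gamma : (0,\varepsilon) \to S$ with $\|\gamma(t)\|^2 = t$, so that $\gamma(t) \to 0$ as $t \searrow 0$.

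The final step is the analytic upgrade. The graph of $\gamma$ is a one-dimensional semialgebraic set with $(0,0)$ in its closure, so each coordinate function $\gamma_i$ admits a convergent Puiseux expansion $\gamma_i(t) = \sum_{\ell \ge 1} a_{i,\ell} \, t^{\ell/N}$ at $t = 0^+$, for a common integer $N$. Reparameterizing via $s = t^{1/N}$ gives $p(s) := \gamma(s^N)$, which is real analytic on $[0,\varepsilon^{1/N})$ with $p(0) = 0$ and $p(s) \in \UU \cap \VV$ for $s > 0$, as required.

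The main obstacle is this analytic upgrade: Hardt's theorem only produces a continuous semialgebraic curve, and converting it into a convergent real analytic parameterization requires Puiseux's theorem for real algebraic curves (equivalently, a resolution of singularities for the one-dimensional Zariski closure of the trace of $\gamma$). The localization and triviality steps are routine; the genuine content of Milnor's lemma lies in justifying this fractional power series expansion at the endpoint.
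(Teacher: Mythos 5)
The paper cites Lemma~\ref{lem:milnor} from Milnor's monograph \cite{mi68} and gives no proof of its own, so there is no in-paper argument to compare against. Your sketch is a correct modern proof via semialgebraic geometry: after localizing, Hardt's semialgebraic triviality theorem applied to $\rho(x)=\|x\|^2$ on $S:=\UU\cap\VV$ yields a continuous semialgebraic arc $\gamma:(0,\varepsilon)\to S$ with $\rho\circ\gamma=\mathrm{id}$, and the convergent Puiseux expansion available for one-variable semialgebraic functions (the graph of each $\gamma_i$ lies on a real algebraic plane curve) gives the analytic reparameterization $p(s)=\gamma(s^N)$. This is a genuinely different route from Milnor's original 1968 argument, which predates Hardt triviality: Milnor reduces to the case where the relevant algebraic set is one-dimensional via a dimension-reduction argument and then invokes the classical branch/Puiseux structure of real algebraic curves directly, whereas you replace the dimension reduction with the triviality theorem. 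The one step worth spelling out in your version: you implicitly use that $\rho(S\cap B(0,\delta))$, being a semialgebraic subset of $\R$ accumulating at $0$ (and, in the nontrivial case $0\notin S$, avoiding $0$), is a finite union of points and intervals and hence contains some $(0,\varepsilon')$; this is what guarantees nonempty fibers over $(0,\varepsilon)$ in the Hardt trivialization. With that noted, the argument is complete.
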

\begin{theorem}
\label{thm:2paramconverge}
There exists $(\bar{X},\bar{y},\bar{Z}) \in \Snp \times \Rm \times \Snp$ with all the properties of Theorem~\ref{thm:2paramcluster} such that 
\[
\lim_{\alpha \searrow 0} (X(\alpha),y(\alpha),Z(\alpha)) = (\bar{X},\bar{y},\bar{Z}).
\]
\end{theorem}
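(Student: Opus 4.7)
The plan is to adapt the Milnor/analytic-curve technique used by Halicka, de Klerk and Roos \cite{Halicka:01,HalickaKlerkRoos:01} for the central path of \SDP: Theorem~\ref{thm:2paramcluster} already supplies a cluster point, so the task reduces to showing that the parametric path extends as a real-analytic curve through $\alpha=0$, which forces uniqueness of the cluster point and hence convergence of the full path.

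Fix any cluster point $(\bar{X},\bar{y},\bar{Z})$ produced by Theorem~\ref{thm:2paramcluster} and translate coordinates via $\Delta X:=X-\bar{X}$, $\Delta y:=y-\bar{y}$, $\Delta Z:=Z-\bar{Z}$ so that this cluster point is sent to the origin. Using $\A(\bar{X})=b$, $\bar{Z}=\A^*(\bar{y})$, and $\bar{Z}\bar{X}=0$ (the last obtained by passing to the limit in $ZX=\alpha I$), the scaled optimality system \eqref{eq:scaledoptimality} becomes the polynomial system
\[
\A^*(\Delta y)-\Delta Z=0,\ \ \A(\Delta X)-\alpha\A(I)=0,\ \ \bar{Z}\,\Delta X+\Delta Z\,\bar{X}+\Delta Z\,\Delta X-\alpha I=0,
\]
which cuts out an algebraic set $\VV$ in $\Sn\times\Rm\times\Sn\times\R$. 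The strict-positivity conditions $\bar{X}+\Delta X\succ 0$, $\bar{Z}+\Delta Z\succ 0$, and $\alpha>0$ are equivalent to positivity of finitely many leading principal minors together with $\alpha>0$, and so carve out an open set $\UU$ described by polynomial inequalities. Since the parametric path accumulates at the origin in these coordinates, $0\in\cl(\UU\cap\VV)$.

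Next, Milnor's Lemma~\ref{lem:milnor} delivers $\varepsilon>0$ and a real-analytic curve $p(t)=(\Delta X(t),\Delta y(t),\Delta Z(t),\alpha(t))$ with $p(0)=0$ and $p(t)\in\UU\cap\VV$ for $t\in(0,\varepsilon)$. The scalar component $\alpha(t)$ is analytic, vanishes at $t=0$, and is strictly positive for $t>0$; writing $\alpha(t)=c_j t^j+O(t^{j+1})$ with $j\ge 1$ and $c_j>0$, the analytic implicit function theorem supplies an analytic local inverse $t(\alpha)$ on a right-neighborhood of $0$. The composition $\alpha\mapsto p(t(\alpha))$ is therefore an analytic branch of solutions to the scaled system, contained in $\UU\cap\VV$, that tends to $0$ as $\alpha\searrow 0$.

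To conclude, Theorem~\ref{thm:maxdet} together with the equivalence between \eqref{eq:optimalsystem} and \eqref{eq:scaledoptimality} guarantees that for every $\alpha>0$ the point in $\UU\cap\VV$ with that value of $\alpha$ is unique, namely the shifted parametric-path point $(X(\alpha)-\bar{X},y(\alpha)-\bar{y},Z(\alpha)-\bar{Z})$; the analytic branch above must therefore coincide with this shifted path on $(0,\varepsilon)$, and so $(X(\alpha),y(\alpha),Z(\alpha))\to(\bar{X},\bar{y},\bar{Z})$ as $\alpha\searrow 0$. Since the same argument applies to any cluster point and yields the same full-path limit, the cluster point is in fact unique. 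The main obstacle I anticipate is the careful unpacking of the semidefinite-cone conditions via leading principal minors, needed to certify that $\UU$ is genuinely cut out by polynomial inequalities and $\VV$ by polynomial equations; once this is in place, Milnor's lemma and the analytic implicit function theorem do the remaining work.
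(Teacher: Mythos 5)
Your proof is essentially the same as the paper's: fix a cluster point, translate to the origin, cut out the shifted scaled optimality conditions as an algebraic set $\VV$, cut out the strict cone and positivity conditions as a polynomial open set $\UU$, observe $0\in\cl(\UU\cap\VV)$, invoke Milnor's lemma for an analytic curve $p(t)$, identify $\UU\cap\VV$ with the shifted parametric path via the uniqueness in Theorem~\ref{thm:maxdet}, and invert the scalar component $\alpha(t)$ to conclude. The only place where you diverge is the final inversion step: you invoke the analytic implicit function theorem and claim an \emph{analytic} local inverse $t(\alpha)$, but when the lowest-order Taylor coefficient has exponent $j>1$ (say $\alpha(t)=c_j t^j+O(t^{j+1})$, $c_j>0$), the derivative $\alpha'(0)=0$ and the inverse behaves like $\alpha^{1/j}$, which is continuous but not analytic at $0$. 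This does not break the argument---continuity and monotonicity of $\alpha(t)$ on a right-neighborhood of $0$ is all that is needed to pass the limit through the composition, which is exactly what the paper uses (it notes $g(t)=\alpha_{(t)}$ is differentiable, hence monotone on a subinterval, hence has a continuous inverse). You should soften the claim accordingly. Incidentally, your formulation of the linear feasibility residual $\A(\Delta X)-\alpha\A(I)=0$ has the correct sign, whereas the paper's displayed $\VV$ has a harmless sign typo ($+\alpha\A(I)$).
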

\begin{proof}
Let $(\bar{X},\bar{y},\bar{Z})$ be a cluster point of the parametric path as in Theorem~\ref{thm:2paramcluster}.  We define the set $\UU$ as 
\[
\UU := \{(X,y,Z, \alpha) \in \Sn \times \Rm \times \Sn \times \R : \bar{X} + X \succ 0, \ \bar{Z} + Z \succ 0, \ Z = \A^*(y), \ \alpha > 0 \}.
\]
Note that each of the positive definite constraints is equivalent to $n$
strict determinant (polynomial) inequalities.  Therefore, $\UU$ satisfies the assumptions of Lemma~\ref{lem:milnor}.  Next, let us define the set $\VV$ as,
\[
\VV := \left \{ (X,y,Z,\alpha) \in \Sn \times \Rm \times \Sn \times \R: \begin{bmatrix}
\A^*(y) - Z  \\ 
\A(X) + \alpha\A(I) \\ 
(\bar{Z}+Z)(\bar{X} + X) - \alpha I 
\end{bmatrix} = 0 \right \},
\]
and note that $\VV$ is indeed a real algebraic set.  Next we show that
there is a one-to-one correspondance between $\UU \cap \VV$ and the
parametric path without any of its cluster points.  Consider
$(\tilde{X},\tilde{y},\tilde{Z},\tilde{\alpha}) \in \UU \cap \VV$ and let
$(X(\tilde{\alpha}),y(\tilde{\alpha}),Z(\tilde{\alpha}))$ be a point on
the parametric path.  We show that
\begin{equation}
\label{eq:2paramfirst}
(\bar{X} + \tilde{X}, \bar{y} + \tilde{y}, \bar{Z} + \tilde{Z}) =
(X(\tilde{\alpha}),y(\tilde{\alpha}),Z(\tilde{\alpha})).
\end{equation}
First of all $\bar{X} + \tilde{X} \succ 0$ and $\bar{Z} + \tilde{Z}
\succ 0$ by inclusion in $\UU$.  Secondly, $(\bar{X} + \tilde{X}, \bar{y}
+ \tilde{y}, \bar{Z} + \tilde{Z})$ solves the system \eqref{eq:scaledoptimality}
when $\alpha = \tilde{\alpha}$:
\[
\begin{bmatrix}
\A^*(\bar{y} + \tilde{y}) - (\bar{Z} + \tilde{Z}) \\
\A(\bar{X} + \tilde{X}) - b(\tilde{\alpha}) \\
(\bar{Z} + \tilde{Z})(\bar{X} + \tilde{X}) - \tilde{\alpha}I
\end{bmatrix} = \begin{bmatrix}
\A^*(\bar{y}) - \bar{Z} + (\A^*(\tilde{y}) - \tilde{Z}) \\
b +\tilde{\alpha}\A(I) - b(\tilde{\alpha}) \\
0
\end{bmatrix} = \begin{bmatrix}
0  \\
0 \\
0
\end{bmatrix}.
\]
Since \eqref{eq:scaledoptimality} has a unique solution, \eqref{eq:2paramfirst} holds.  Thus,
\[
(\tilde{X},\tilde{y},\tilde{Z}) = (X(\alpha) - \bar{X},y(\alpha) - \bar{y}, Z(\alpha)-\bar{Z}),
\]
and it follows that $\UU \cap \VV$ is a translation of the parametric path (without its cluster points):
\begin{equation}
\label{eq:2paramsecond}
\UU \cap \VV = \{(X,y,Z,\alpha) \in \Sn \times \Rm \times \Sn \times \R : (X,y,Z) = (X(\alpha) - \bar{X},y(\alpha) - \bar{y}, Z(\alpha)-\bar{Z}), \ \alpha > 0 \}.
\end{equation}
Next, we show that $0 \in \cl(\UU \cap \VV)$.  To see this, note that 
\[
(X(\alpha),y(\alpha),Z(\alpha)) \rightarrow (\bar{X},\bar{y},\bar{Z}), 
\]
as $\alpha \searrow 0$ along a subsequence.  Therefore, along the same subsequence, we have
\[
( X(\alpha) - \bar{X}, y(\alpha) - \bar{y}, Z(\alpha) - \bar{Z}, \alpha) \rightarrow 0.
\]
Each of the elements of this subsequence belongs to $\UU \cap \VV$ by \eqref{eq:2paramsecond} and therefore $0 \in \cl(\UU \cap \VV)$. 

We have shown that $\UU$ and $\VV$ satisfy all the assumptions of
Lemma~\ref{lem:milnor}, hence there exists $\varepsilon > 0$ and an analytic curve $p:
[0,\varepsilon) \rightarrow \Sn \times \Rm \times \Sn \times \R$ such that $p(0) = 0$ and $p(t) \in \UU \cap \VV$ for $t > 0$.  Let 
\[
p(t) = (X_{(t)},y_{(t)},Z_{(t)},\alpha_{(t)}),
\]
and observe that by \eqref{eq:2paramsecond}, we have
\begin{equation}
\label{eq:2paramthird}
(X_{(t)},y_{(t)},Z_{(t)},\alpha_{(t)}) = (X(\alpha_{(t)}) - \bar{X},y(\alpha_{(t)}) - \bar{y}, Z(\alpha_{(t)})-\bar{Z}).
\end{equation}
Since $p$ is a real analytic curve, the map $g: [0,\varepsilon) \rightarrow \R$ defined as $g(t) = \alpha_{(t)},$ is a differentiable function on the open interval $(0,\varepsilon)$ with 
\[
\lim_{t\searrow 0} g(t) = 0.
\]
In particular, this implies that there is an interval
$[0,\bar{\varepsilon}) \subseteq [0,\varepsilon)$ where $g$ is monotone.
It follows that on $[0,\bar{\varepsilon})$, $g^{-1}$ is a well defined
continuous function that converges to $0$ from the right.  Note that for
any $t > 0$, $(X(t),y(t),Z(t))$ is on the parametric path.  Therefore,
\[
\lim_{t\searrow 0}X(t) = \lim_{t\searrow 0} X(g(g^{-1}(t))) = \lim_{t\searrow 0} X(\alpha_{(g^{-1}(t)}).
\]
Substituting with \eqref{eq:2paramthird}, we have
\[
\lim_{t\searrow 0}X(t) =  \lim_{t\searrow 0} X_{(g^{-1}(t))} + \bar{X} = \bar{X}.
\]
Similarly, $y(t)$ and $Z(t)$ converge to $\bar{y}$ and $\bar{Z}$
respectively.  Thus every cluster point of the parametric path is identical
to $(\bar{X},\bar{y},\bar{Z})$.
\end{proof}
We have shown that the tail of the parametric path is smooth and it has
a limit point.  Smoothness of the entire path follows from Berge's
Maximum Theorem, \cite{MR1464690}, or  \cite[Example 5.22]{MR1491362}.

\subsection{Convergence to the Analytic Center}
\label{sec:analyticcenter}
The results of the previous section establish that the parametric path
converges to $\relint (\F)$ and therefore the primal part of the limit
point has excatly $r$ positive eigenvalues.  If the smallest positive
eigenvalue is very small it may be difficult to distinguish it from zero
numerically. Therefore it is desirable for the limit point to be
`substantially' in the relative interior, in the sense that its smallest
positive eigenvalue is relatively large.  The analytic center has this
property and so a natural question is whether the limit point coincides
with the analytic center.  In the following modification of an example
of \cite{HalickaKlerkRoos:01}, the parametric path converges to a point different from the analytic center.
\begin{example}
\label{ex:noncvg}
Consider the \SDP feasibility problem where $\A$ is defined by
\[S_1 := \begin{bmatrix} 
1 & 0 & 0 & 0 \\
0 & 1 & 0 & 0 \\
0 & 0 & 0 & 0 \\
0 & 0 & 0 & 0 \\
\end{bmatrix},\,\, S_2 :=   
\begin{bmatrix} 
0 & 0 & 0 & 0 \\
0 & 0 & 0 & 1 \\
0 & 0 & 1 & 0 \\
0 & 1 & 0 & 0 \\
\end{bmatrix}, \,\, S_3 :=   \begin{bmatrix} 
0 & 0 & 0 & 0 \\
0 & 0 & 1 & 0 \\
0 & 1 & 0 & 0 \\
0 & 0 & 0 & 1 \\
\end{bmatrix}, \] 
\[S_4 :=   \begin{bmatrix} 
0 & 0 & 0 & 0 \\
0 & 0 & 0 & 0 \\
0 & 0 & 0 & 1 \\
0 & 0 & 1 & 0 \\
\end{bmatrix},\,\, S_5 :=   \begin{bmatrix} 
0 & 0 & 0 & 0 \\
0 & 0 & 0 & 0 \\
0 & 0 & 0 & 0 \\
0 & 0 & 0 & 1 \\
\end{bmatrix},
\]
and $b := (1,0,0,0,0)^T$.  One can verify that the feasible set consists of positive semidefinite matrices of the form
\[X= \begin{bmatrix} 
1-x_{22} & x_{12} & 0 & 0 \\
x_{12} & x_{22} & 0 & 0 \\
0 & 0 & 0 & 0 \\
0 & 0 & 0 & 0 \\
\end{bmatrix}.\]
and the analytic center is the determinant maximizer over the positive definite blocks of this set and satisfies $x_{22}=0.5$ and $x_{12}=0$.  However, the parametric path converges to a matrix with $x_{22} = 0.6$ and $x_{12} = 0$.  To see this note that
\[\A(I) = \begin{pmatrix}2 & 1 & 1 & 0 & 1\end{pmatrix}^T,\quad  b(\alpha) = \begin{pmatrix}1 + 2\alpha & \alpha & \alpha & 0 & \alpha \end{pmatrix}^T.\]
By feasibility, $\Xa$ has the form
\[ \begin{bmatrix} 
1+2\alpha-x_{22} & x_{12} & x_{13} & x_{14} \\
x_{12} & x_{22} & 0 & \frac{1}{2}(\alpha-x_{33}) \\
x_{13} & 0 & x_{33} & 0 \\
x_{14} & \frac{1}{2}(\alpha-x_{33}) & 0 & \alpha \\
\end{bmatrix}.\]
Moreover, the optimality conditions of Theorem~\ref{thm:maxdet} indicate that $\Xoa^{-1} \in \range(\A^*)$ and hence is of the form
\[ \begin{bmatrix} 
* & 0 & 0 & 0 \\
0 & * & * & * \\
0& * & * & * \\
0 & * & * & * \\
\end{bmatrix}.\]
It follows that $x_{12}=x_{13}=x_{14} = 0$ and $\Xa$ has the form
\[ 
\begin{bmatrix} 
1 + 2\alpha -x_{22} & 0 & 0 & 0 \\
0 & x_{22} & 0 & \frac{1}{2}(\alpha-x_{33}) \\
0 & 0 & x_{33} & 0 \\
0 & \frac{1}{2}(\alpha-x_{33}) & 0 & \alpha \\
\end{bmatrix}. 
\] 
Of all the matrices with this form, $\Xoa$ is the one maximizing the determinant, that is
\begin{align*}
([\Xoa]_{22}, [\Xoa]_{33})^T = \arg \max \ & x_{33}(1+2\alpha - x_{22})(\alpha x_{22} - \frac{1}{4}(\alpha - x_{33})^2), \\
s.t. \ &  0 < x_{22} < 1+2\alpha, \\
& x_{33} > 0, \\
& \alpha x_{22} >  \frac{1}{4}(\alpha - x_{33})^2.
\end{align*}
Due to the strict inequalities, the maximizer is a stationary point of the objective function.  Computing the derivative with respect to $x_{22}$ and $x_{33}$ we obtain the equations
\begin{align*}
x_{33}(-(\alpha x_{22} - \frac{1}{4}(\alpha - x_{33})^2) + \alpha(1+2\alpha-x_{22}) &= 0, \\
(1+2\alpha-x_{22})((\alpha x_{22} - \frac{1}{4}(\alpha - x_{33})^2) + \frac{1}{2}x_{33}(\alpha-x_{33})) &= 0.
\end{align*}
Since $x_{33} > 0$ and $(1+2\alpha-x_{22}) > 0$, we may divide them out.  Then solving each equation for $x_{22}$ we get
\begin{align}
\label{ex:first}
x_{22} &= \frac{1}{8\alpha}(\alpha - x_{33})^2 + \alpha + \frac{1}{2}, \\
\label{ex:second}
x_{22} &= \frac{1}{4\alpha}(\alpha - x_{33})^2 - \frac{1}{2\alpha}x_{33}(\alpha-x_{33}).
\end{align}
Substituting \eqref{ex:first} into \eqref{ex:second} we get
\begin{align*}
0 &= \frac{1}{4\alpha}(\alpha - x_{33})^2 - \frac{1}{2\alpha}x_{33}(\alpha-x_{33}) - \frac{1}{8\alpha}(\alpha - x_{33})^2 - \alpha - \frac{1}{2}, \\
&= \frac{1}{8\alpha}(\alpha - x_{33})^2 - \frac{1}{2}x_{33} + \frac{1}{2\alpha}x_{33}^2 - \alpha - \frac{1}{2}, \\
&= \frac{1}{8\alpha}x_{33}^2 - \frac{1}{4}x_{33} +\frac{1}{8}\alpha - \frac{1}{2}x_{33} + \frac{1}{2\alpha}x_{33}^2 - \alpha - \frac{1}{2}, \\
&= \frac{5}{8\alpha}x_{33}^2 - \frac{3}{4}x_{33} +\frac{1}{8}\alpha - \alpha - \frac{1}{2}, \\
\end{align*}
Now we solve for $x_{33}$,
\begin{align*}
x_{33} &= \frac{\frac{3}{4} \pm \sqrt{ \frac{9}{16} - 4(\frac{5}{8\alpha})(\frac{1}{8}\alpha - \alpha - \frac{1}{2})}}{2\frac{5}{8\alpha}}, \\
&= \frac{3\alpha}{5} \pm \frac{4\alpha}{5}\sqrt{ \frac{11\alpha + 5}{4\alpha}}, \\
&= \frac{1}{5}(3\alpha + 2\sqrt{\alpha}\sqrt{ 11\alpha + 5}).
\end{align*}
Since $x_{33}$ is fully determined by the stationarity constraints, we have $[\Xoa]_{33} = x_{33}$ and $[\Xa]_{33} \rightarrow 0$ as $\alpha \searrow 0$.  Substituting this expression for $x_{33}$ into \eqref{ex:first} we get
\begin{align*}
[\Xoa]_{22} &= \frac{1}{8\alpha}(\alpha - \frac{1}{5}(3\alpha + 2\sqrt{\alpha}\sqrt{ 11\alpha + 5}))^2 + \alpha + \frac{1}{2}, \\
&= \frac{1}{8\alpha}(\alpha^2 - 2\alpha \frac{1}{5}(3\alpha + 2\sqrt{\alpha}\sqrt{ 11\alpha + 5}) + \frac{1}{25}(9\alpha^2 + 6\alpha \sqrt{\alpha}\sqrt{ 11\alpha + 5} + 4\alpha(11\alpha+5))) + \alpha + \frac{1}{2}, \\
&= \frac{1}{8}\alpha - \frac{1}{20}(3\alpha + 2\sqrt{\alpha}\sqrt{ 11\alpha + 5}) + \frac{1}{200}(9\alpha + 6 \sqrt{\alpha}\sqrt{ 11\alpha + 5} + 4(11\alpha+5)) + \alpha + \frac{1}{2}, \\
&= \frac{31}{25}\alpha - \frac{7}{100}\sqrt{\alpha}\sqrt{ 11\alpha + 5} + \frac{6}{10}.
\end{align*}
Now it is clear that $[\Xoa]_{22} \rightarrow 0.6$ as $\alpha \searrow 0$.
\end{example}

\subsubsection{A Sufficient Condition for Convergence to the Analytic Center}
\label{sec:sufficientanalytic}
Recall that $\face(\F) = V\Srp V^T$.  To simplify the discussion we may assume that $V = \begin{bmatrix} I \\ 0 \end{bmatrix}$, so that
\begin{equation}
\label{eq:facialstructure}
\face(\F) = \begin{bmatrix}
\Srp &0 \\
0 & 0
\end{bmatrix}.
 \end{equation}
This follows from the rich automorphism group of $\Snp$, that is, for any full rank $W\in \Rnn$, we have $W\Snp W^T = \Snp$.  Moreover, it is easy to see that there is a one-to-one correspondence between relative interior points under such transformations.

Let us now express $\F$ in terms of $\nul(\A)$, that is, if $A_0 \in \F$ and 
recall that $A_1,\dotso, A_q, \, q=t(n)-m,$ form a basis for $\nul (\A)$, then
\index{$\nul(\A)=\spanl \{ A_1,\dotso,A_q\}$}
\[
\F = \left( A_0 + \spanl \{ A_1,\dotso,A_q\} \right) \cap \Snp.
\]
Similarly,
\[
\Fa = \left( \alpha I + A_0 + \spanl \{ A_1,\dotso,A_q\} \right) \cap \Snp.
\]
Next, let us partition $A_i$ according to the block structure of \eqref{eq:facialstructure}:
\begin{equation}
\label{eq:partNi}
A_i = \begin{bmatrix} L_i & M_i \cr M_i^T & N_i \end{bmatrix} , \quad i\in \{0, \ldots , q\}.
\end{equation}
Since $A_0 \in \F$, from \eqref{eq:facialstructure} we have $N_0 = 0$ and $M_0 = 0$.  Much of the subsequent discussion focuses on the linear pencil $\sum_{i=1}^q x_iN_i$.  Let $\N$ be the linear mapping such that 
\[
\nul (\N) = \left \{ \sum_{i=1}^q x_iN_i : x \in \Rq \right \}.
\]
\begin{lemma}
\label{lem:maxdetN} 
Let $\{N_1,\dotso,N_q\}$ be as in \eqref{eq:partNi},
$\spanl \{N_1,\dotso,N_q\} \cap \Snp = \{0\}$, and let
\begin{equation}
\label{eq:Q}
Q := \arg \max \{\log \det (X): X = I + \sum_{i=1}^q x_i N_i \succ 0, \ x \in \Rq \}.
\end{equation}
Then for all $\alpha >0$,
\begin{equation}
\label{eq:alphaQ}
\alpha Q = \arg \max \{\log \det (X): X = \alpha I + \sum_{i=1}^q x_i N_i \succ 0, \ x \in \Rq \}.
\end{equation} 
\end{lemma}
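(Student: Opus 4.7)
The proof is essentially a homogeneity argument for $\log\det$ combined with a scaling bijection between the two feasible sets. My plan is as follows.

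First I would observe that the two feasible sets are related by scaling: the map $x \mapsto \alpha x$ (equivalently $X \mapsto \alpha X$) is a bijection between
\[
\F_1 := \{X = I + \textstyle\sum_{i=1}^q x_i N_i : x \in \Rq,\ X \succ 0\}
\quad\text{and}\quad
\F_\alpha := \{X = \alpha I + \textstyle\sum_{i=1}^q x_i N_i : x\in \Rq,\ X \succ 0\},
\]
since $X = I + \sum x_i N_i \succ 0$ if and only if $\alpha X = \alpha I + \sum (\alpha x_i) N_i \succ 0$ for $\alpha > 0$, and this reparametrization is clearly invertible on $\Rq$.

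Next I would use the homogeneity identity $\log \det(\alpha X) = n\log\alpha + \log\det(X)$ to conclude that the objective values at corresponding points $X \in \F_1$ and $\alpha X \in \F_\alpha$ differ by the additive constant $n\log\alpha$, independent of $X$. Hence $Q$ maximizes $\log\det$ over $\F_1$ if and only if $\alpha Q$ maximizes $\log\det$ over $\F_\alpha$, giving the claimed equality $\alpha Q = \arg\max\{\log\det X : X \in \F_\alpha\}$.

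To justify that the argmax notation is well-defined, I would briefly note that $\F_1$ is nonempty (take $x=0$, giving $X=I$) and that the hypothesis $\spanl\{N_1,\dots,N_q\} \cap \Snp = \{0\}$ makes the recession cone of the affine section $\{I + \sum x_i N_i\} \cap \Snp$ trivial, so the feasible set is bounded by the same kind of argument as in Lemma~\ref{lem:boundedchar}. Combined with strict concavity of $\log\det$ on the PD cone and the barrier property $\log\det X \to -\infty$ as $X$ approaches a singular boundary point (as already invoked in the proof of Theorem~\ref{thm:maxdet}), this yields existence and uniqueness of $Q$, and the same argument applied to $\F_\alpha$ shows its argmax is a singleton as well. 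There is no real obstacle here; the whole content is the scaling bijection plus homogeneity of $\log\det$.
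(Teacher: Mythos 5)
Your proof is correct, and it takes a genuinely different route from the paper's. The paper invokes the first-order optimality characterization from Theorem~\ref{thm:maxdet}: it introduces the linear map $\N$ with $\nul(\N) = \spanl\{N_1,\ldots,N_q\}$, notes that $Q$ is the unique feasible positive definite matrix with $Q^{-1} \in \range(\N^*)$, and then observes that $\alpha Q$ is feasible for \eqref{eq:alphaQ} with $(\alpha Q)^{-1} = \alpha^{-1}Q^{-1} \in \range(\N^*)$, so it satisfies the same sufficient optimality condition. Your argument bypasses the KKT machinery entirely: the map $X \mapsto \alpha X$ (i.e., $x \mapsto \alpha x$) is a bijection between the two feasible sets, and $\log\det(\alpha X) = n\log\alpha + \log\det X$ shifts the objective by a constant, so the maximizers correspond. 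Both are short and correct. The paper's version is natural given that it has already set up the dual characterization and reuses it; yours is more elementary and self-contained, requiring only the bijection and the homogeneity of the determinant. Your well-posedness remarks (nonemptiness, boundedness via $\spanl\{N_i\}\cap\Snp=\{0\}$ and Lemma~\ref{lem:boundedchar}, strict concavity and the barrier property) match what the paper uses to justify that $Q$ is well-defined, so nothing is missing.
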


\begin{proof} 
We begin by expressing $Q$ in terms of $\N$:
\[
Q = \arg \max \{\log \det (X): \N(X) = \N(I) \}.
\]
By the assumption on the span of the matrices $N_i$ and by Lemma~\ref{lem:boundedchar}, the feasible set of \eqref{eq:Q} is bounded.  Moreover, the feasible set contains positive definite matrices, hence all the assumptions of Theorem~\ref{thm:maxdet} are satisfied.  It follows that $Q$ is the unique feasible, positive definite matrix satisfying $Q^{-1} \in \range( \N^*)$.  

Moreover, $\alpha Q$ is positive definite, feasible for \eqref{eq:alphaQ}, 
and $(\alpha Q)^{-1} \in \range(\N^*)$.  Therefore $\alpha Q$ is optimal 
for \eqref{eq:alphaQ}.
\end{proof}

Now we prove that the parametric path converges to the analytic center under the condition of Lemma~\ref{lem:maxdetN}.
\begin{theorem}
\label{thm:analyticcenter} 
Let $\{N_1,\dotso,N_q\}$ be as in \eqref{eq:partNi}.
If $\spanl \{N_1,\dotso,N_q\} \cap \Snp = \{0\}$ and $\bar{X}$ is the limit point of the primal part of the parametric path as in Theorem~\ref{thm:2paramconverge}, then $\bar{X} = \hat{X}$.
\end{theorem}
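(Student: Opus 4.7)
The plan is to show $\bar{X} = \hat{X}$ by comparing $\log\det X(\alpha)$ against a carefully constructed competitor in $\F(\alpha) \cap \Snpp$ built from $\hat{X}$ and the matrix $Q$ of Lemma~\ref{lem:maxdetN}. Since $\face(\F)$ has the block form \eqref{eq:facialstructure}, both $\hat{X}$ and $\bar{X}$ have the shape $\bmat{\cdot & 0 \\ 0 & 0}$, so the task reduces to showing the top-left blocks $\hat{L}$ and $\bar{L}$ coincide.

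First, I would construct the competitor. By \eqref{eq:Q}, write $Q = I_{n-r} + \sum_{i=1}^q q_i N_i$ and set $E := I + \sum_{i=1}^q q_i A_i$, $\tilde{X}(\alpha) := \hat{X} + \alpha E$. Feasibility $\A(\tilde{X}(\alpha)) = b(\alpha)$ is automatic since $\A(\hat{X}) = b$ and $A_i \in \nul(\A)$. By construction the $(2,2)$-block of $\tilde{X}(\alpha)$ is exactly $\alpha Q \succ 0$, and a Schur-complement check shows the Schur complement in the $(1,1)$-block is $\hat{L} + O(\alpha) \succ 0$ for all small $\alpha > 0$. Expanding the log-determinant of $\tilde{X}(\alpha)$ through the Schur decomposition then yields
\[
\log\det \tilde{X}(\alpha) = (n-r)\log\alpha + \log\det Q + \log\det \hat{L} + O(\alpha).
\]

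Next, I would analyze the parametric path itself by the same block decomposition $X(\alpha) = \bmat{L(\alpha) & M(\alpha) \\ M(\alpha)^T & N(\alpha)}$. Because $N(\alpha) = \alpha I_{n-r} + \sum x_i(\alpha) N_i \succ 0$ and $\spanl\{N_1,\dotso,N_q\} \cap \Snrp = \{0\}$, the mapping $\N$ of Lemma~\ref{lem:maxdetN} together with Lemma~\ref{lem:boundedchar} forces $N(\alpha)/\alpha$ to be bounded; along a subsequence $N(\alpha)/\alpha \to Q'$, and the Schur complement $S(\alpha) := L(\alpha) - M(\alpha)N(\alpha)^{-1}M(\alpha)^T \preceq L(\alpha) \to \bar{L}$ has a subsequential limit $S^* \preceq \bar{L}$. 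Then
\[
\log\det X(\alpha) = (n-r)\log\alpha + \log\det(N(\alpha)/\alpha) + \log\det S(\alpha) \longrightarrow (n-r)\log\alpha + \log\det Q' + \log\det S^* + o(1).
\]

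Finally, I would chain the inequalities. Optimality of $X(\alpha)$ over $\F(\alpha)$ together with $\tilde{X}(\alpha) \in \F(\alpha) \cap \Snpp$ gives $\log\det X(\alpha) \geq \log\det \tilde{X}(\alpha)$; matching the two expansions above and cancelling the $(n-r)\log\alpha$ term yields $\log\det Q' + \log\det S^* \geq \log\det Q + \log\det \hat{L}$. By Lemma~\ref{lem:maxdetN}, $Q$ is the \emph{unique} maximizer, so $\log\det Q' \leq \log\det Q$; and $S^* \preceq \bar{L}$ gives $\log\det S^* \leq \log\det \bar{L}$. Substituting yields $\log\det \bar{L} \geq \log\det \hat{L}$, so by uniqueness of the analytic center (applied to $\bar{L}$, which is feasible for problem \eqref{eq:analytic}), $\bar{L} = \hat{L}$, hence $\bar{X} = \hat{X}$.

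The main obstacle will be ensuring that the subsequential limits $Q'$ and $S^*$ are \emph{positive definite} rather than merely positive semidefinite, since otherwise $\log\det Q'$ or $\log\det S^*$ equals $-\infty$ and the chain of inequalities collapses. This is resolved by the lower bound from the competitor: a singular $Q'$ or $S^*$ would force $\log\det X(\alpha) - (n-r)\log\alpha \to -\infty$, directly contradicting the competitor bound. Thus both limits are automatically PD, and the argument goes through.
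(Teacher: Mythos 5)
Your proposal is correct and takes essentially the same approach as the paper: it uses the same competitor $\hat{X} + \alpha(I + \sum_i q_i A_i)$ built from $Q$ of Lemma~\ref{lem:maxdetN}, the same block decomposition of $X(\alpha)$, and the same bound on the $(2,2)$-block via the definition of $Q$. The only mechanical difference is that you use the exact Schur-complement determinant identity and chain subsequential inequalities directly, whereas the paper uses the (weaker) Hadamard--Fischer bound $\det X(\alpha) \le \det L(\alpha)\det N(\alpha)$ and sets up an explicit contradiction via a sandwich $\det\bar{Y} < r < s < \det\hat{Y}$; both routes hinge on the same lemma and the same competitor, and your version is a clean reorganization rather than a genuinely new argument.
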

\begin{proof} 
Let 
\[
\bar{X} =: \begin{bmatrix}
\bar{Y} & 0 \\
0 & 0
\end{bmatrix},\ \hat{X} =: \begin{bmatrix}
\hat{Y} & 0 \\
0 & 0
\end{bmatrix}
\]
and suppose, for eventual contradiction, that $\bar{Y} \ne \hat{Y}$.  Then let $r,s \in \R$ be such that 
\[
\det(\bar{Y}) < r < s < \det(\hat{Y}).
\]
Let $Q$ be as in Lemma~\ref{lem:maxdetN} and let $x \in \Rq$ satisfy $Q = I + \sum_{i=1}^q x_iN_i$.  Now for any $\alpha >0$ we have
\[
\hat{X} + \alpha( I + \sum_{i=1}^q x_i A_i) =  \begin{pmatrix} \hat{Y}+\alpha I + \alpha \sum_{i=1}^q x_i L_i & \alpha \sum_{i=1}^q x_iM_i \cr \alpha \sum_{i=1}^q x_i M_i^T & \alpha Q \end{pmatrix} .
\] 
Note that there exists $\varepsilon >0$ such that $\hat{X} + \alpha \sum_{i=1}^q x_iA_i \succeq 0$ whenever $\alpha \in (0,\varepsilon)$.  It follows that 
\[
\hat{X} + \alpha( I + \sum_{i=1}^q x_i A_i) \in \Fa, \quad \forall \alpha \in (0,\varepsilon).
\]
Taking the determinant, we have
\begin{align*}
\frac{1}{\alpha^{n-r}} \det (\hat{X} + \alpha( I + \sum_{i=1}^q x_i A_i)) &= \frac{1}{\alpha^{n-r}}\det \left( \alpha Q-\alpha^2 (\sum_{i=1}^q x_iM_i ) (\hat{Y}+\alpha I + \alpha \sum_{i=1}^q x_iL_i )^{-1} (\sum_{i=1}^q x_iM_i^T) \right) \\
&\qquad \qquad \times\det (\hat{Y}+\alpha I + \alpha \sum_{i=1}^q x_iL_i), \\
&=  \det \left( Q-\alpha (\sum_{i=1}^q x_iM_i ) (\hat{Y}+\alpha I + \alpha \sum_{i=1}^q x_iL_i )^{-1} (\sum_{i=1}^q x_iM_i^T) \right) \\
&\qquad \qquad \times\det (\hat{Y}+\alpha I + \alpha \sum_{i=1}^q x_iL_i).
\end{align*}
Now we have
\[
\lim_{\alpha \searrow 0} \  \frac{1}{\alpha^{n-r}} \det (\hat{X} + \alpha( I + \sum_{i=1}^q x_i A_i)) = \det(Q)\det(\hat{Y}).
\]
Thus, there exists $\sigma \in (0,\varepsilon)$ so that for $\alpha \in (0,\sigma )$ we have
\[
\det (\hat{X} + \alpha( I + \sum_{i=1}^q x_i A_i)) > s \alpha^{n-r} \det (Q) .
\]
As $\Xa$ is the determinant maximizer over $\Fa$, we also have
\begin{equation}
\label{eq:detX} 
\det( \Xa) > s \alpha^{n-r} \det( Q ), \quad \forall \alpha \in (0, \sigma ).  
\end{equation}
On the other hand $\Xa \rightarrow \bar{X}$ and let
\[
\Xa =: \begin{bmatrix}
\alpha I + \sum_{i=1}^q x(\alpha)_i L_i & \sum_{i=1}^q x(\alpha)_i M_i \\
\sum_{i=1}^q x(\alpha)_i M^T_i & \alpha I + \sum_{i=1}^q x(\alpha)_i N_i
\end{bmatrix}.
\]
Then $\alpha I + \sum_{i=1}^q x(\alpha)_i L_i \rightarrow \bar{Y}$ and there exists $\delta \in (0,\sigma)$ such that for all $\alpha \in (0,\delta)$,
\[
\det(\alpha I + \sum_{i=1}^q x(\alpha)_i L_i) < r.
\]
Moreover, by definition of $Q$,
\[
\det(\alpha I + \sum_{i=1}^q x(\alpha)_i N_i) \le \det(\alpha Q) = \alpha^{n-r} \det(Q).
\]
To complete the proof, we apply the Hadamard-Fischer inequality to $\det(\Xa)$.  For $\alpha \in (0,\delta)$ we have
\[
\det(\Xa) \le \det(\alpha I + \sum_{i=1}^q x(\alpha)_i L_i)\det(\alpha I + \sum_{i=1}^q x(\alpha)_i N_i) < r\alpha^{n-r} \det( Q),
\]
a contradiction of \eqref{eq:detX}.
\end{proof}
\begin{remark}
Note that Example \ref{ex:noncvg} fails the hypotheses of Theorem
\vspace{.1in}
\ref{thm:analyticcenter}. Indeed, the matrix
$\begin{bmatrix} 
0 & 0 & 0 & 0 \\
0 & 0 & 0 & -1 \\
0 & 0 & 2 & 0 \\
0 & -1 & 0 & 0 \\
\end{bmatrix}$
lies in $\nul(\A)$ and the bottom $2\times 2$ block is nonzero and positive
semidefinite.
\end{remark}

\section{The Projected Gauss-Newton Method}
\label{sec:projGN}

We have constructed a parametric path that converges to a point in the
relative interior of $\F$.  In this section we propose an algorithm to
follow the path to its limit point.  We do not prove convergence of the
proposed algorithm and address its performance in
Section~\ref{sec:numerics}. We follow the (projected) Gauss-Newton approach (the
nonlinear analog of Newton's method) originally introduced for \SDPs in
\cite{KrMuReVaWo:98} and improved more recently in \cite{KrukDoanW:10}.
This approach has been shown to have improved robustness compared to
other symmetrization approaches. For well posed problems,
the Jacobian for the search direction remains full rank in the limit to
the optimum.

\subsection{Scaled Optimality Conditions}
The idea behind this approach is to view the system defining the
parametric path as an overdetermined
map and use the Gauss-Newton (GN) method for nonlinear systems.
In the process, the linear feasibility equations are eliminated and the
GN method is applied to the remaining bilinear equation.
For $\alpha \ge0$ let $G_{\alpha}: \Snp \times \Rm \times \Snp \rightarrow
\Sn \times \Rm \times \Rnn$ be defined as
\begin{equation}
\label{eq:GdefGN}
G_{\alpha}(X,y,Z):= 
\begin{bmatrix}
    \A^*(y)-Z \\
    \A(X) -\ba \\
   ZX - \alpha I \\
\end{bmatrix}. 
\end{equation}
The solution to $G_{\alpha}(X,y,Z)= 0$ is exactly $(\Xa,\ya,\Za)$ when
$\alpha > 0$; and for $\alpha = 0$ the solution set is
\[
\F \times (\A^*)^{-1}(\DD) \times \DD , \quad \DD := \range(\A^*) \cap \face(\F)^c.
\]
Clearly, the limit point of the parametric path satisfies $G_0(X,y,Z) = 0$.  We fix $\alpha >
0$. The GN direction, $(dX,dy,dZ)$, uses the
overdetermined \textdef{GN system}
\index{$(dX,dy,dZ)$, Gauss-Newton direction} 
\index{Gauss-Newton direction, $(dX,dy,dZ)$} 
\begin{equation}
\label{eq:GNorig}
G_{\alpha}'(X,y,Z)\begin{bmatrix}
dX \\
dy \\
dZ
\end{bmatrix} = -G_{\alpha}(X,y,Z).
\end{equation}
Note that the search direction is a strict descent direction for the norm of
the residual, $\| \kvec(G_{\alpha}(X,y,Z)) \|_2^2$, when the Jacobian is full rank.
The size of the problem is then reduced by projecting out the first two equations. We are left with a single linearization of the bilinear
complementarity equation, i.e.,~$n^2$ equations in only $t(n)$ variables.
The \textdef{least squares solution} yields the projected
GN direction after backsolves.
We prefer steps of length $1$, however, the primal and dual step lengths, $\alpha_p$ and $\alpha_d$ respectively,
are reduced, when necessary, 
to ensure strict feasibility: $X + \alpha_p dX \succ 0$ and 
$Z+\alpha_d dZ \succ 0$.
The parameter $\alpha$ is then reduced and the 
procedure repeated.  On the parametric path, $\alpha$ satisfies
\begin{equation}
\label{eq:alpharep}
\alpha = \frac{\langle \Za, \Xa \rangle }{n}.
\end{equation}
Therefore, this is a good estimate of the target for $\alpha$ near the
parametric path. As is customary, we then use a fixed $\sigma \in (0,1)$ to
move the target towards optimality, $\alpha \leftarrow \sigma \alpha$. 

\subsubsection{Linearization and GN Search Direction}
For the purposes of this discussion we vectorize the variables and data
 in $G_{\alpha}$. Let $A \in \R^{m\times t(n)}$ be the matrix representation 
of $\A$, that is
\index{$A$, matrix representation}
\index{matrix representation, $A$}
\[
A_{i,:} := \svec(S_i)^T, \quad i\in \{1,\dotso,m\}.
\]
Let $N \in \R^{t(n)\times (t(n)-m)}$ be such that its columns form a
basis for $\nul (A)$ and let $\hat{x}$ be a particular solution to
$Ax=\ba$, e.g., the least squares solution.  Then the affine manifold
determined from the equation $\A(X)=\ba$ is equivalent to that
obtained from the equation
\[
x = \hat{x} + Nv, \quad v\in \R^{t(n)-m}. 
\]
Moreover, if $z:=\svec(Z)$, we have the vectorization
\begin{equation}
g_{\alpha}(x,v,y,z) := \begin{bmatrix}
A^Ty - z \\
x-\hat{x}-Nv \\
\sMat(z)\sMat(x) - \alpha I
\end{bmatrix} =: \begin{bmatrix}
r_d \\
r_p \\
R_c
\end{bmatrix},
\label{eq:systemg}
\end{equation}
Now we show how the first two equations of the above system may be
projected out, thereby reducing the size of the problem.  First we have
\[
g'_{\alpha}(x,v,y,z)\begin{pmatrix}
dx \\
dv \\ 
dy \\ 
dz
\end{pmatrix} = \begin{bmatrix}
A^Tdy - dz \\
dx - Ndv \\
\sMat(dz)\sMat(x) + \sMat(z)sMat(dx)
\end{bmatrix},
\]
and it follows that the GN step as in \eqref{eq:GNorig} is the least squares solution of the system
\[
\begin{bmatrix}
A^Tdy - dz \\
dx - Ndv \\
\sMat(dz)\sMat(x) + \sMat(z)\sMat(dx)
\end{bmatrix} = - \begin{bmatrix}
r_d \\
r_p \\
R_c \\
\end{bmatrix}.
\]
Since the first two equations are linear, we get $dz = A^Tdy+r_d$ and $dx = Ndv - r_p$.  Substituting into the third equation we have,
\[
\sMat(A^Tdy + r_d)\sMat(x) + \sMat(z)\sMat(Ndv - r_p) = -R_c.
\]
After moving all the constants to the right hand side we obtain the projected GN system in $dy$ and $dv$,
\begin{equation}
\label{eq:projGN}
\sMat(A^Tdy)\sMat(x) + \sMat(z)\sMat(Ndv) = -R_c + \sMat(z)\sMat(r_p) - \sMat(r_d)\sMat(x).
\end{equation}
The least squares solution to this system is the exact GN direction when $r_d = 0$ and $r_p=0$, otherwise it is an approximation.  We then use the equations $dz = A^Tdy+r_d$ and $dx = Ndv - r_p$ to obtain search directions for $x$ and $z$.

In  \cite[Theorem 1]{KrukDoanW:10}, it is proved that if the solution
set of $G_0(X,y,Z) = 0$ is a singleton such that $X+Z \succ 0$ and the
starting point of the projected GN algorithm is sufficiently
close to the parametric path then the algorithm, with a crossover
modification, converges quadratically.  
As we showed
above, the solution set to our problem is 
\[
\F \times (\A^*)^{-1}(\DD) \times \DD,
\]
which is not a singleton as long as $\F \ne \emptyset$.  Indeed, $\DD$ is a non-empty cone.  Although the convergence result of \cite{KrukDoanW:10} does not apply to our problem, their numerical tests indicate that the algorithm converges even for problems violating the strict complementarity and uniqueness assumptions and our observations agree.

\subsection{Implementation Details}
Several specific implementation modifications are used.  We begin with initial $x,v,y,z$ with corresponding $X,Z\succ 0$. If we obtain $P \succ 0$ as in Proposition~\ref{prop:boundtest} then we set $Z = P$ and define $y$ accordingly, otherwise $Z = X = I$.  We
estimate $\alpha$ using \eqref{eq:alpharep} and set $\alpha \leftarrow
2\alpha$ to ensure that our target is somewhat well centered to start.

\subsubsection{Step Lengths and Linear Feasibility}
We start with initial step lengths $\alpha_p=\alpha_d=1.1$ and then 
backtrack using a Cholesky factorization test to ensure positive definiteness 
\[
X+\alpha_p dX \succ 0, \quad Z+\alpha_d dZ \succ 0.
\]
If the step length we find is still $>1$ after the backtrack, 
we set it to $1$ and first update $v,y$ and then update $x,z$ using
\[
x=\hat x + N v,  \quad z=A^Ty.
\]
This ensures exact linear feasibility. Thus we find that we maintain exact dual
feasibility after a few iterations. Primal feasibility changes since
$\alpha$ decreases. We have experimented with including an extra few iterations at the end of the algorithm
with a fixed $\alpha$ to obtain exact primal feasibility (for the given $\alpha$).  In most cases the improvement of feasibility with respect to $\F$ was minimal and not worth the extra computational cost.

\subsubsection{Updating $\alpha$ and Expected Number of Iterations}
In order to drive $\alpha$ down to zero, we fix $\sigma \in (0,1)$ and
update alpha as $\alpha \leftarrow \sigma \alpha$.  We use a moderate $\sigma =
.6$.  However, if this reduction is performed too
quickly then our step lengths end up being too small and we get too close
to the positive semidefinite boundary. Therefore, we change $\alpha$
using information from $\min \{\alpha_p,\alpha_d\}$. If the steplength
is reasonably near $1$ then we decrease using $\sigma$; if the steplength is
around $.5$ then we leave $\alpha$ as is; if the steplength is small
then we \emph{increase} to $1.2\alpha$; and if the steplength is tiny
($<.1$), we increase to $2\alpha$. For most of the test problems,
this strategy resulted in steplengths of $1$ after the first few
iterations.

We noted empirically that the condition number of the Jacobian for the
least squares problem increases quickly, i.e.,~several singular values
converge to zero. Despite this we are able to
obtain high accuracy search directions.\footnote{Our algorithm finds
the search direction using \eqref{eq:projGN}. If we looked at a singular
value decomposition then we get the equivalent system $\Sigma
(V^T d \bar s) = (U^T RHS)$. We observed that several singular
values in $\Sigma$ converge to zero while the corresponding elements
in $(U^T RHS)$ converge to zero at a similar rate. This accounts for the
improved accuracy despite the huge condition numbers. This appears to be a
similar phenomenon to that observed in the analysis of interior point
methods in \cite{MR99i:90093,MR96f:65055} and as discussed in
\cite{GoWo:04}.}

Since we typically have steplengths of $1$, $\alpha$ is generally decreased using $\sigma$. Therefore, for a desired tolerance 
$\epsilon$ and a starting $\alpha =1$ we would want $\sigma^k < \epsilon$, or equivalently,
\[
\quad k < \log_{10} (\epsilon)/\log_{10}(\sigma).  
\]
For our $\sigma=.6$ and $t$ decimals of desired accuracy, we expect to need
$k<4.5t$ iterations.

\section{Generating Instances and Numerical Results}
\label{sec:numerics}
In this section we analyze the performance of an implementation of our algorithm.  We begin with a discussion on generating spectrahedra.  A particular challenge is in creating spectrahedra with specified singularity degree.  Following this discussion, we present and analyze the numerical results.

\subsection{Generating Instances with Varying Singularity Degree}
\label{sec:generating}
Our method for generating instances is motivated by the approach of \cite{WeiWolk:06} for generating \SDPs with varying \emph{complementarity gaps}.  We begin by proving a relationship between strict complementarity of a primal-dual pair of \SDP problems and the singularity degree of the optimal set of the primal \SDP.  This relationship allows us to modify the code presented in \cite{WeiWolk:06} and obtain spectrahedra having various singularity degrees.  Recall the primal \SDP 
\begin{equation}
\label{prob:sdpprimalcopy}
\SDP \qquad \qquad \textdef{$p^{\star}$}:=\min \{ \langle C,X\rangle : \A(X)=b, X\succeq 0\},
\end{equation}
with dual
\begin{equation}
\label{prob:sdpdualcopy}
\DSDP \qquad \qquad \textdef{$d^{\star}$}:=\min \{ b^Ty  : \A^*(y) \preceq C \}.
\end{equation}
Let $O_P\subseteq \Snp$ and $O_D\subseteq \Snp$ denote the primal and dual optimal sets respectively, where the dual optimal set is with respect to the variable $Z$.  Specifically,
\[
O_P := \{X\in \Snp : \A(X) = b, \ \langle C, X\rangle = p^{\star} \}, \ O_D := \{ Z \in \Snp : Z = C-\A^*(y), \ b^Ty = d^{\star}, \ y \in \Rm  \}.
\]
Note that $O_P$ is a spectrahedron determined by the affine manifold
\[
\begin{bmatrix}
\A(X) \\
\langle C, X \rangle
\end{bmatrix} = \begin{pmatrix}
b \\
p^*
\end{pmatrix}.
\]
We note that the second system in the theorem of the alternative, Theorem~\ref{thm:alternative}, for the spectrahedron $O_P$ is
\begin{equation}
\label{eq:opalternative}
0 \ne \tau C + \A^*(y) \succeq 0, \ \tau p^{\star} + y^Tb = 0.
\end{equation}
We say that \emph{strict complementarity} holds for \SDP and \DSDP if there exists $X^{\star} \in O_P$ and $Z^{\star}\in O_D$ such that
\[
\langle X^{\star}, Z^{\star} \rangle = 0  \text{ and } \rank(X^{\star}) + \rank(Z^{\star}) = n.
\]
If strict complementarity does not hold for \SDP and \DSDP and there exist $X^{\star} \in \relint(O_P)$ and $Z^{\star} \in \relint(O_D)$, then we define the complementarity gap as
\[
g := n - \rank(X^{\star}) - \rank(Z^{\star}).
\]
Now we describe the relationship between strict complementarity of \SDP and \DSDP and the singularity degree of $O_P$.
\begin{prop}
\label{prop:scsd}
If strict complementarity holds for
\SDP and \DSDP, then $\sd(O_P) \le 1$.
\end{prop}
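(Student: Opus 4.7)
The plan is to show that the dual slack $Z^{\star}$ from the strict complementarity pair already serves as a maximal-rank exposing vector for $\face(O_P)$, obtainable from a single application of Algorithm~\ref{algo:fr}. First I would handle the degenerate case: if $\rank(X^{\star}) = n$ then $X^{\star} \in O_P \cap \Snpp$, so Slater's condition holds for $O_P$ and $\sd(O_P) = 0$. So assume $\rank(X^{\star}) < n$, which forces $\rank(Z^{\star}) = n - \rank(X^{\star}) > 0$ by the strict complementarity hypothesis.

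Next I would verify that $X^{\star}$ is in fact a maximal rank element of $O_P$, hence $X^{\star} \in \relint(O_P)$. From $\langle X^{\star}, Z^{\star} \rangle = 0$ combined with \eqref{eq:innerprodmatrixprod}, we get $X^{\star} Z^{\star} = 0$, so $\range(Z^{\star}) \subseteq \nul(X^{\star})$; the rank equality then yields $\range(Z^{\star}) = \nul(X^{\star})$. Writing $Z^{\star} = C - \A^*(y^{\star})$ for some dual optimal $y^{\star}$, the complementarity identity gives $0 = \langle X^{\star}, Z^{\star} \rangle = p^{\star} - b^T y^{\star}$, which establishes strong duality $p^{\star} = d^{\star}$. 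Consequently, for every $X \in O_P$,
\[
\langle X, Z^{\star} \rangle = \langle X, C \rangle - \langle \A(X), y^{\star} \rangle = p^{\star} - b^T y^{\star} = 0,
\]
so $X Z^{\star} = 0$ and $\range(X) \subseteq \nul(Z^{\star}) = \range(X^{\star})$. Thus $X^{\star}$ is of maximal rank in $O_P$, and Theorem~\ref{thm:face} identifies $\face(O_P)$ with $\range(X^{\star}) \Srp \range(X^{\star})^T$ where $r = \rank(X^{\star})$, while exposing vectors of $\face(O_P)$ are precisely the positive definite elements of $\nul(X^{\star}) \Snrp \nul(X^{\star})^T$. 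Since $\range(Z^{\star}) = \nul(X^{\star})$ and $Z^{\star} \succeq 0$ has the corresponding maximal rank $n - r$, the matrix $Z^{\star}$ lies in $\relint(\face(O_P)^c)$ and is a maximal rank exposing vector.

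Finally I would check that $Z^{\star}$ is produced by a single iteration of facial reduction applied to $O_P$, that is, that $Z^{\star}$ arises from the second alternative \eqref{eq:opalternative} of Theorem~\ref{thm:alternative} for the spectrahedron $O_P$. Taking $\tau = 1$ and $y = -y^{\star}$ in \eqref{eq:opalternative} gives $\tau C + \A^*(y) = C - \A^*(y^{\star}) = Z^{\star} \succeq 0$ with $\tau p^{\star} + y^T b = p^{\star} - d^{\star} = 0$, exactly as required. Therefore Algorithm~\ref{algo:fr}, when it selects the maximal-rank exposing vector, can return $Z^{\star}$ at the first iteration, and facial reduction terminates, giving $\sd(O_P) \le 1$.

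The main obstacle is not any single step but the bookkeeping needed to reconcile sign conventions between the dual \DSDP (where $Z = C - \A^*(y)$) and the formulation of alternative 2 of Theorem~\ref{thm:alternative} applied to the lifted constraint map of $O_P$; once that match is made, and once strong duality is extracted from the complementarity identity, the argument is essentially immediate from Theorem~\ref{thm:face}.
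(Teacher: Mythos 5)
Your proof is correct and follows essentially the same route as the paper: after disposing of the case $X^\star\succ 0$, you verify that $(\tau,y)=(1,-y^\star)$ satisfies alternative 2 of Theorem~\ref{thm:alternative} for the lifted constraint map of $O_P$, and then use $\rank(X^\star)+\rank(Z^\star)=n$ to see that one reduction step suffices. The only difference is that you explicitly check that $X^\star$ is of maximal rank in $O_P$ (via $\range(X)\subseteq\nul(Z^\star)=\range(X^\star)$ for every $X\in O_P$), whereas the paper simply takes $X^\star\in\relint(O_P)$ from the outset and pairs it with a maximal rank $Z^\star\in O_D$; this is a slightly more careful bookkeeping step, not a different idea.
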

\begin{proof}
Let $X^{\star} \in \relint(O_P)$.  If $X^{\star} \succ 0$, then $\sd(O_P) = 0$ and we are done.  Thus we may assume $\rank(X^{\star}) < n$.  By strict complementarity, there exists $(y^{\star},Z^{\star}) \in \Rm \times \Snp$ feasible for \DSDP with $Z^{\star} \in O_D$ and $\rank(X^{\star}) + \rank(Z^{\star}) = n$.  Now we show that $(1,-y^{\star})$ satisfies \eqref{eq:opalternative}.  Indeed, by dual feasibility,
\[
C - \A^*(y^{\star}) = Z^{\star} \in \Snp \setminus \{0\},
\]
and by complementary slackness,
\[
p^{\star} - (y^{\star})^Tb = \langle X^{\star}, C \rangle - \langle \A^*(y^{\star}), X^{\star} \rangle = \langle X^{\star},Z^{\star} \rangle = 0.
\]
Finally, since $\rank(X^{\star}) + \rank(Z^{\star}) = n$ we have $\sd(O_P) = 1$, as desired.
\end{proof}

From the perspective of facial reduction, the interesting spectrahedra are those with singularity degree greater than zero and the above proposition gives us a way to construct spectrahedra with singularity degree exactly one.  Using the algorithm of \cite{WeiWolk:06} we construct strictly complementary \SDPs and then use the optimal set of the primal to construct a spectrahedron with singularity degree exactly one.  Specifically, given positive integers $n, m, r,$ and $g$ the algorithm of \cite{WeiWolk:06} returns the data $\A,b,C$ corresponding to a primal dual pair of \SDPs, together with $X^{\star} \in \relint(O_P)$ and $Z^{\star} \in \relint(O_D)$ satisfying
\[
\rank(X^{\star}) = r, \ \rank(Z^{\star}) = n-r-g.
\]
Now if we set 
\[
\hat{\A}(X) := \begin{pmatrix}
\A(X) \\
\langle C, X \rangle
\end{pmatrix}, \ \hat{b} = \begin{pmatrix}
b \\
\langle C, X^{\star} \rangle
\end{pmatrix},
\]
then $O_P = \F(\hat{\A},\hat{b})$.  Moreover, if $g=0$ then $\sd(O_P) = 1$, by Proposition~\ref{prop:scsd}.
This approach could also be used to create spectrahedra with larger singularity degrees by constructing \SDPs with greater complementarity gaps, if the converse of Proposition~\ref{prop:scsd} were true.  We provide a sufficient condition for the converse in the following proposition.
\begin{prop}
\label{prop:sdscconverse}
If $\sd(O_P)=0$, then strict complementarity holds for \SDP and \DSDP.  Moreover, if $\sd(O_P) = 1$ and the set of solutions to \eqref{eq:opalternative} intersects $\R_{++} \times \Rm$, then strict complementarity holds for \SDP and \DSDP.  
\end{prop}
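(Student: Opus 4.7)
The plan is to prove the two implications separately, in each case exhibiting an $(X^\star,Z^\star)$ that witnesses strict complementarity.

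For the first implication, $\sd(O_P)=0$ simply means $O_P\cap\Snpp\neq\emptyset$. Picking $X^\star$ in this intersection, it is strictly feasible for $\SDP$ since $O_P\subseteq\F$, so Theorem~\ref{thm:strongduality} supplies a dual optimizer $Z^\star\in O_D$. Complementary slackness gives $\langle X^\star,Z^\star\rangle = \langle C,X^\star\rangle - b^Ty^\star = p^\star - d^\star = 0$, and then \eqref{eq:innerprodmatrixprod} combined with $X^\star\succ 0$ forces $Z^\star=0$. Hence $\rank(X^\star)+\rank(Z^\star) = n + 0 = n$.

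For the second implication, I fix $X^\star\in\relint(O_P)$ and let $r:=\rank(X^\star)<n$ (the strictness because $\sd(O_P)\geq 1$). The assumption $\sd(O_P)=1$, applied through Theorem~\ref{thm:alternative} to the spectrahedron $O_P$, supplies a solution $(\tau_0,y_0)$ of \eqref{eq:opalternative} such that $W_0:=\tau_0C+\A^*(y_0)$ is a maximal-rank exposing vector; by the usual exposing-vector arithmetic $\nul(W_0)\supseteq\range(X^\star)$, and equality holds by dimension count, giving $\rank W_0 = n-r$. The hypothesis supplies a second solution $(\tau_+,y_+)$ with $\tau_+>0$, and $\nul(W_+)\supseteq\range(X^\star)$ holds likewise.

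The key construction is to take $W:=W_0+\lambda W_+$ for $\lambda$ chosen large enough that $\tau:=\tau_0+\lambda\tau_+>0$. Nonnegative combinations preserve both conditions of \eqref{eq:opalternative}, and the kernel of a sum of positive semidefinite matrices is the intersection of their kernels, so $\nul(W)=\nul(W_0)\cap\nul(W_+)=\range(X^\star)$ and $\rank W = n-r$. Setting $Z^\star:=W/\tau = C - \A^*(-y/\tau)$ yields a dual feasible slack, and the scalar identity $\tau p^\star + y^Tb=0$ rearranges to $b^T(-y/\tau)=p^\star$, placing $Z^\star\in O_D$. The ranks sum to $r+(n-r)=n$, which is strict complementarity.

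The main obstacle is the claim that $\sd(O_P)=1$ delivers a maximal rank exposing vector \emph{of the parameterized form} $\tau C+\A^*(y)$ satisfying the scalar constraint of \eqref{eq:opalternative}; this is essentially built into the definition of singularity degree applied to $O_P$, since the alternative for $O_P$ is \eqref{eq:opalternative}. The remaining subtlety is that this $W_0$ could have $\tau_0\leq 0$, which is exactly why the extra positive-$\tau_+$ hypothesis is needed: it lets one shift $W_0$ along the cone of exposing vectors into one whose $C$-coefficient is strictly positive without sacrificing maximal rank, thereby converting an exposing vector into a bona fide dual optimal slack.
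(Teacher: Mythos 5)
Your proof is correct and follows essentially the same strategy as the paper's: the first implication is word-for-word the same, and for the second both proofs take a nonnegative combination of the maximal-rank exposing vector and the positive-$\tau$ solution so that the result simultaneously has $\tau>0$ and rank $n-r$, then scale by $1/\tau$ to get a dual optimal slack. The only cosmetic difference is that you add a large multiple of the positive-$\tau$ vector to the maximal-rank one, whereas the paper adds a small multiple of the maximal-rank vector to the positive-$\tau$ one; both work because the nullspace of a sum of PSD matrices is the intersection of their nullspaces.
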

\begin{proof}
Since we have only defined singularity degree for non-empty spectrahedra, there exists $X^{\star} \in \relint(O_P)$.  For the first statement, by Theorem~\ref{thm:strongduality}, there exists $Z^{\star} \in O_D$.  Complementary slackness always holds, hence $\langle Z^{\star}, X^{\star} \rangle = 0$ and since $X^{\star}\succ 0$ we have $Z^{\star}=0$.  It follows that $\rank(X^{\star}) + \rank(Z^{\star}) = n$ and strict complementarity holds for \SDP and \DSDP.  

For the second statement, let $(\bar{\tau},\bar{y})$ and $(\tilde{\tau},\tilde{y})$ be solutions to \eqref{eq:opalternative} with $\bar{\tau} >0$ and $\tilde{\tau} C +\A^*(\tilde{y})$ of maximal rank.  Let 
\[
\bar{Z} := \bar{\tau}C + \A^*(\bar{\tau}), \ \tilde{Z} := \tilde{\tau} C +\A^*(\tilde{y}).
\]
Then there exists $\varepsilon > 0$ such that $\bar{\tau} + \varepsilon \tilde{\tau} >0$ and $\rank(\bar{Z} + \varepsilon \tilde{Z}) \ge \rank(\tilde{Z})$.  Define
\[
\tau := \bar{\tau} + \varepsilon \tilde{\tau}, \ y := \bar{y} + \varepsilon \tilde{y}, \ Z :=  \bar{Z} + \varepsilon \tilde{Z}.
\]
Now $(\tau,y)$ is a solution to \eqref{eq:opalternative}, i.e.,
\[
0 \ne \tau C + \A^*(y) \succeq 0, \ \tau p^{\star} + y^Tb = 0.
\] 
Moreover, $\rank(X^{\star}) + \rank(Z) = n$ since $\sd(O_P) = 1$ and $Z$ is of maximal rank.  Now we define
\[
Z^{\star} := \frac{1}{\tau} Z = C - \A^*\left(-\frac{1}{\tau}y\right).
\]
Since $\tau>0$, it is clear that $Z^{\star} \succeq 0$ and it follows that $\left(-\frac{1}{\tau} y, Z^{\star}\right)$ is feasible for \DSDP.  Moreover, this point is optimal since
\[
d^{\star} \ge -\frac{1}{\tau} y^Tb = p^{\star}\ge d^{\star}.
\]
Therefore $Z^{\star} \in O_D$ and since $\rank(Z^{\star}) = \rank(Z)$, strict complementarity holds for \SDP and \DSDP.
\end{proof}

\subsection{Numerical Results}
\label{sec:numericsreal}

\begin{table}[ht]
\centering
\begin{tabular}{|c|c|c|c|c|c|c|c|c|c|} \hline
$n$ & $m$ & $r$ & $\lambda_1(X)$ & $\lambda_r(X)$ & $\lambda_{r+1}(X)$ & $\lambda_n(X)$ & $\lVert \A(X)-b \rVert_2$ & $\langle Z,X\rangle$ & $\alpha_f$ \cr\hline
   50 &   100 &    25 & 1.06e+02 & 2.80e+01 & 1.97e-11 & 5.07e-13 & 3.17e-12 & 1.26e-13 & 1.10e-12 \cr\hline
   80 &   160 &    40 & 8.74e+01 & 3.22e+01 & 1.20e-10 & 9.00e-13 & 7.28e-12 & 2.95e-13 & 2.01e-12 \cr\hline
  110 &   220 &    55 & 7.74e+01 & 3.73e+01 & 3.56e-10 & 7.23e-13 & 9.12e-12 & 3.65e-13 & 2.14e-12 \cr\hline
  140 &   280 &    70 & 7.82e+01 & 3.84e+01 & 4.11e-10 & 7.08e-13 & 1.26e-11 & 5.20e-13 & 2.65e-12 \cr\hline
\end{tabular}

\caption{Results for the case $\sd=1$.  The eigenvalues refer to those of the primal variable, $X$, and each entry is the average of five runs.}
\label{tab:sd1}
\end{table}

\begin{table}[h!]
\centering
\begin{tabular}{|c|c|c|c|c|c|c|} \hline
$n$ & $m$ & $r$ & $\lambda_1(Z)$ & $\lambda_{r_d}(Z)$ & $\lambda_{r_d+1}(Z)$ & $\lambda_n(Z)$ \cr\hline
   50 &   100 &    25 & 1.85e+00 & 9.07e-02 & 3.96e-14 & 1.27e-14 \cr\hline
   80 &   160 &    40 & 1.96e+00 & 6.91e-02 & 6.23e-14 & 2.30e-14 \cr\hline
  110 &   220 &    55 & 1.98e+00 & 2.61e-02 & 5.77e-14 & 2.78e-14 \cr\hline
  140 &   280 &    70 & 2.03e+00 & 2.46e-02 & 6.96e-14 & 3.39e-14 \cr\hline
\end{tabular}

\caption{Eigenvalues of the dual variable, $Z$, corresponding to the primal variable of Table~\ref{tab:sd1}.  Each entry is the average of five runs.}
\label{tab:sd1dual}
\end{table}

\begin{table}[h!]
\centering
\tiny{
\begin{tabular}{|c|c|c|c|c|c|c|c|c|c|c|c|c|} \hline
$n$ & $m$ & $r$ & $g$ & $\lambda_1(X)$ & $\lambda_r(X)$ & $\lambda_{r+1}(X)$ & $\lambda_{r+g}(X)$ & $\lambda_{r+g+1}(X)$ & $\lambda_n(X)$ & $\lVert \A(X)-b \rVert_2$ & $\langle Z,X\rangle$ & $\alpha_f$ \cr\hline
   50 &   100 &    17 &     5 & 9.89e+01 & 1.85e+01 & 6.62e-05 & 2.61e-05 & 2.13e-10 & 6.10e-13 & 4.99e-12 & 2.04e-13 & 1.07e-12 \cr\hline
   80 &   160 &    27 &     8 & 1.11e+02 & 2.00e+01 & 1.89e-05 & 1.28e-05 & 7.36e-11 & 5.17e-13 & 8.40e-12 & 2.73e-13 & 1.27e-12 \cr\hline
  110 &   220 &    37 &    11 & 1.09e+02 & 2.42e+01 & 3.52e-05 & 2.33e-05 & 2.05e-10 & 1.52e-12 & 1.92e-11 & 6.46e-13 & 2.33e-12 \cr\hline
  140 &   280 &    47 &    14 & 1.63e+02 & 2.64e+01 & 1.07e-04 & 2.65e-05 & 1.02e-10 & 1.17e-13 & 9.84e-12 & 3.52e-13 & 1.48e-12 \cr\hline
\end{tabular}

}
\caption{Results for the case $\sd=2$.  The eigenvalues refer to those of the primal variable, $X$, and each entry is the average of five runs.}
\label{tab:sd2}
\end{table}

\begin{table}[h!]
\centering
\begin{tabular}{|c|c|c|c|c|c|c|c|c|c|} \hline
$n$ & $m$ & $r$ & $g$ & $\lambda_1(Z)$ & $\lambda_{r_d}(Z)$ & $\lambda_{r_d+1}(Z)$ & $\lambda_{r_d+g}(Z)$ & $\lambda_{r_d+g+1}(Z)$ & $\lambda_n(Z)$ \cr\hline
   50 &   100 &    17 &     5 & 2.22e+00 & 2.51e-02 & 1.04e-07 & 8.38e-08 & 9.18e-14 & 1.51e-14 \cr\hline
   80 &   160 &    27 &     8 & 2.03e+00 & 3.65e-02 & 1.03e-07 & 7.45e-08 & 7.92e-14 & 1.69e-14 \cr\hline
  110 &   220 &    37 &    11 & 2.13e+00 & 6.11e-02 & 1.78e-07 & 1.23e-07 & 1.36e-13 & 2.76e-14 \cr\hline
  140 &   280 &    47 &    14 & 2.19e+00 & 4.16e-02 & 7.39e-08 & 4.35e-08 & 6.04e-14 & 8.14e-15 \cr\hline
\end{tabular}

\caption{Eigenvalues of the dual variable, $Z$, corresponding to the primal variable of Table~\ref{tab:sd2}.  Each entry is the average of five runs.}
\label{tab:sd2dual}
\end{table}

For the numerical tests, we generate instances with $n \in \{50,80,110,140\}$ and $m=2n$.  These are problems of small size relative to state of the art capabilities, nonetheless, we are able to demonstrate the performance of our algorithm through them.  In Table~\ref{tab:sd1} and Table~\ref{tab:sd1dual} we record the results for the case $\sd=1$.  For each instance, specified by $n$, $m,$ and $r$, the results are the average of five runs.  By $r$, we denote the maximum rank over all elements of the generated spectrahedron, which is fixed to $r=n/2$.  In Table~\ref{tab:sd1} we record the relevant eigenvalues of the primal variable, primal feasibility, complementarity, and the value of $\alpha$ at termination, denoted $\alpha_f$.  The values for primal feasibility and complementarity are sufficiently small and it is clear from the eigenvalues presented, that the first $r$ eigenvalues are significantly smaller than the last $n-r$.  These results demonstrate that the algorithm returns a matrix which is very close to the relative interior of $\F$.  In Table~\ref{tab:sd1dual} we record the relevant eigenvalues for the corresponding dual variable, $Z$.  Note that $r_d := n-r$ and the eigenvalues recorded in the table indicate that $Z$ is indeed an exposing vector.  Moreover, it is a maximal rank exposing vector.  While, we have not proved this, we observed that it is true for every test we ran with $\sd = 1$.  

In Table~\ref{tab:sd2} and Table~\ref{tab:sd2dual} we record similar values for problems where the singularity degree may be greater than $1$.  Using the approach described in Section~\ref{sec:generating} we generate instances of \SDP and \DSDP having a complementarity gap of $g$ and then we construct our spectrahedron from the optimal set of \SDP.  By Proposition~\ref{prop:scsd} and Proposition~\ref{prop:sdscconverse} the resulting spectrahedron may have singularity degree greater than 1.  We observe that primal feasibility and complementarity are attained to a similar accuracy as in the $\sd=1$ case.  The eigenvalues of the primal variable fall into three categories.  The first $r$ eigenvalues are sufficiently large so as not to be confused with $0$, the last $n-r-g$ eigenvalues are convincingly small, and the third group of eigenvalues, exactly $g$ of them, are such that it is difficult to decide if they should be $0$ or not.  A similar phenomenon is observed for the eigenvalues of the dual variable.  This demonstrates that exactly $g$ of the eigenvalues are converging to $0$ at a rate significantly smaller than that of the other $n-r-g$ eigenvalues.

\section{An Application to PSD Completions of Simple Cycles}
\label{sec:psdcyclecompl}
In this final section, we show that our parametric path and the relative interior point it converges to have interesting structure for cycle completion problems.

Let $G=(V,E)$ be an undirected graph with $n = \lvert V \rvert$ and let
$a \in \R^{\lvert E \rvert}$.  Let us index the components of $a$ by the
elements of $E$.  A matrix $X \in \Sn$ is a \textdef{completion} of $G$
under $a$ if $X_{ij} = a_{ij}$ for all $\{i,j\} \in E$.  We say that $G$
is \textdef{partially PSD} under $a$ if there exists a completion of $G$
under $a$ such that all of its principle minors consisting entirely of
$a_{ij}$ are PSD.  Finally, we say that $G$ is \textdef{PSD completable}
if for all $a$ such that $G$ is partially PSD, there exists a PSD
completion.  Recall that a graph is \textdef{chordal} if for every cycle with at
least four vertices, there is an edge connecting non-adjacent vertices.
The classical result of \cite{GrJoSaWo:84} states that $G$ is PSD
completable if, and only if, it is chordal.

An interesting problem for non-chordal graphs is to characterize the
vectors $a$ for which $G$ admits a PSD completion.  Here we consider PSD
completions of non-chordal cycles with loops.  This problem was first
looked at in \cite{MR1236734}, where the following special case is presented.

\begin{theorem}[Corollary 6, \cite{MR1236734}]
\label{thm:simplecycle}
Let $n\ge 4$ and $\theta, \phi \in [0,\pi]$. Then
\begin{equation}\label{simple} C := \begin{bmatrix}
1 & \cos(\theta) & & & \cos(\phi) \\
\cos(\theta) & 1 & \cos(\theta) & ? & \\
& \cos(\theta) & 1 & \ddots & \\
& ? & \ddots & \ddots & \cos(\theta) \\
\cos(\phi) & & & \cos(\theta) & 1
\end{bmatrix}, \end{equation}
has a positive semidefinite completion if, and only if,
$$ \phi \le (n-1)\theta \le (n-2)\pi + \phi \qquad \text{for n even}$$
and
$$ \phi \le (n-1)\theta \le (n-1)\pi - \phi \qquad \text{for n odd.}$$
The partial matrix \eqref{simple} has a positive definite completion if,
and only if, the above inequalities are strict. 
\end{theorem}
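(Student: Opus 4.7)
The plan is to reformulate the PSD completion problem in terms of configurations of unit vectors on a sphere, deduce the necessary inequalities from spherical triangle inequalities, and establish sufficiency by an explicit construction in $\R^3$. A symmetric matrix $X$ with unit diagonal is positive semidefinite if and only if $X_{ij}=\langle v_i,v_j\rangle$ for some unit vectors $v_1,\ldots,v_n$ in Euclidean space, so a PSD completion of \eqref{simple} exists precisely when unit vectors $v_1,\ldots,v_n$ can be chosen with $\angle(v_i,v_{i+1})=\theta$ for $i=1,\ldots,n-1$ and $\angle(v_1,v_n)=\phi$, where $\angle(\cdot,\cdot)$ is geodesic distance on the sphere, valued in $[0,\pi]$.

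For necessity I would use the iterated spherical triangle inequality $\angle(u_1,u_k)\le\sum_{i=1}^{k-1}\angle(u_i,u_{i+1})$, which follows by induction from the three-vector case. Applied directly to $v_1,\ldots,v_n$, it gives $\phi\le(n-1)\theta$. For the parity-dependent upper bound I would apply the same inequality to the sign-flipped sequence $u_i:=(-1)^{i+1}v_i$, whose consecutive angles are $\pi-\theta$; since $u_1=v_1$ while $u_n=v_n$ for $n$ odd and $u_n=-v_n$ for $n$ even, the quantity $\angle(u_1,u_n)$ is $\phi$ in the odd case and $\pi-\phi$ in the even case. The inequality then rearranges into $(n-1)\theta+\phi\le(n-1)\pi$ for $n$ odd and $(n-1)\theta-\phi\le(n-2)\pi$ for $n$ even.

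For sufficiency I would place $v_1$ and $v_n$ on the equator of $S^2$ at angular distance $\phi$,
\[
v_1:=(\cos(\phi/2),\,-\sin(\phi/2),\,0),\quad v_n:=(\cos(\phi/2),\,\sin(\phi/2),\,0),
\]
and place the interior vectors $v_2,\ldots,v_{n-1}$ on a common latitude circle at height $\sin\beta$ with longitudes $\gamma_2,\ldots,\gamma_{n-1}$. The boundary conditions $\angle(v_1,v_2)=\angle(v_{n-1},v_n)=\theta$ become $\cos\beta\cos(\gamma_2+\phi/2)=\cos\theta$ and $\cos\beta\cos(\phi/2-\gamma_{n-1})=\cos\theta$, while the interior conditions give $\cos^2\beta\cos(\gamma_j-\gamma_{j+1})+\sin^2\beta=\cos\theta$, i.e., $\gamma_j-\gamma_{j+1}=\pm\mu$ with $\cos\mu=(\cos\theta-\sin^2\beta)/\cos^2\beta$. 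Under the theorem's inequalities one can choose $\beta\in[0,\pi/2]$ so that both $|\cos\theta/\cos\beta|\le 1$ and $|(\cos\theta-\sin^2\beta)/\cos^2\beta|\le 1$ hold, and then select signs $\epsilon_j\in\{\pm 1\}$ for the interior steps so that the accumulated longitude $\gamma_2-\gamma_{n-1}=\sum_{j=2}^{n-2}\epsilon_j\mu$ matches the boundary requirement, producing a valid PSD completion.

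For the positive definite assertion, under the strict inequalities the construction above has strict slack in each of its constraints, so the resulting rank-$3$ Gram matrix can be perturbed by a small symmetric matrix supported on the unspecified off-diagonal positions while remaining PSD; a suitable choice of perturbation raises the rank to $n$ and delivers a PD completion. Conversely, equality in either bound forces one of the spherical triangle inequalities above to be tight, which places all $v_i$ on a single great semicircle and hence in a two-dimensional subspace, making every completion of rank at most $2$ and thus not PD for $n\ge 3$. The main obstacle is verifying that the ranges of $\beta$, of the auxiliary angle $\mu$, and of the admissible sign sequences $\epsilon_j$ cover exactly the $(\theta,\phi)$ region described by the inequalities; this requires case analysis on the parity of $n$ and on the size of $\theta$ relative to $\pi/2$ and $\pi/(n-1)$, and stitching the cases together to match the stated bounds without gap.
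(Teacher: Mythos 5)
The paper does not prove Theorem~\ref{thm:simplecycle}: it is quoted, with attribution, as Corollary~6 of the cited reference \cite{MR1236734} and is then used as a black box in the cycle-completion discussion, so there is no in-paper proof to compare your argument against.

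On its own merits, your spherical reformulation is the right framework, and the \emph{necessity} direction is complete and correct. In particular the sign-flip $u_i := (-1)^{i+1}v_i$, which converts every edge angle $\theta$ into $\pi-\theta$ while leaving $\angle(u_1,u_n)$ equal to $\phi$ or $\pi-\phi$ according to the parity of $n$, cleanly extracts the parity-dependent upper bound from the same iterated triangle inequality that gives $\phi\le(n-1)\theta$. The converse of the PD assertion is also fine: tightness of the iterated spherical triangle inequality confines all $v_i$ to a single geodesic, so every Gram representation lies in a two-dimensional subspace and no completion can be PD for $n\ge 3$.

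The two remaining parts have genuine gaps. For \emph{sufficiency}, you introduce a latitude parameter $\beta$ and a discrete family of sign sequences $\epsilon_j$, but you never verify that as $\beta$ ranges over $[0,\pi/2]$ and the $\epsilon_j$ over $\{\pm 1\}^{n-3}$ the attainable closing angle sweeps continuously through all $\phi$ permitted by the inequalities; ``one can choose $\beta$ so that $\ldots$'' is exactly where the proof lives, and as you acknowledge, a parity-and-regime case analysis is still needed. For the forward direction of the \emph{PD} claim, the perturbation step fails as stated: a rank-$3$ PSD Gram matrix $X_0$ has $n-3$ zero eigenvalues, and adding a small symmetric $\delta E$ supported on the unspecified entries does not generically push those eigenvalues into $(0,\infty)$ --- for a generic such $E$ the matrix $X_0+\delta E$ leaves $\Snp$ for either sign of $\delta$. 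To make this rigorous you need a different tool, for example a continuity/openness argument in the $(\theta,\phi)$ parameter region, or the dual (facial-reduction) characterization that a PD completion exists iff there is no nonzero $W\succeq 0$ supported on the cycle with $\langle W,X\rangle=0$ for every completion $X$.
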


Using the results of the previous sections we present an analytic expression for exposing vectors in the case where a PSD completion exists but not a PD one, i.e., the Slater CQ does not hold for the corresponding \SDP.  We begin by showing that the primal part of the parametric path is always Toeplitz.  In general, for a partial Toeplitz matrix, the unique maximum determinant completion is not necessarily Toeplitz. For instance, the maximum determinant completion of
$$ \begin{bmatrix} 6 & 1 & x & 1 & 1 \cr 1 & 6 & 1 & y & 1 \cr x & 1 & 6 & 1 & z \cr 1 & y & 1 & 6 & 1 \cr 1 & 1 & z & 1 & 6 \end{bmatrix} $$
is given by $x=z=0.3113$ and $y=0.4247$.

\begin{theorem}\label{md} 
If the parital matrix
\[
P := \begin{bmatrix}
a & b & & & c \\
b & a & b & ? & \\
& b & a & \ddots & \\
& ? & \ddots & \ddots & b \\
c & & & b & a
\end{bmatrix}
\]
has a positive definite completion, then the unique maximum determinant completion is Toeplitz.
\end{theorem}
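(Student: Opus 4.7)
The plan is to exploit uniqueness of the maximum-determinant completion together with the characterization $(X^\star)^{-1}\in\range(\A^*)$, and to verify that the $\log\det$-maximizer over the Toeplitz subfamily meets this same characterization; uniqueness then collapses the two and forces $X^\star$ to be Toeplitz.

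First I would set up uniqueness and the optimality condition. Because $P$ admits a PD completion, the spectrahedron $\mathcal F$ of PSD completions is nonempty with nonempty interior, and the constraint $X_{ii}=a$ together with $|X_{ij}|\le\sqrt{X_{ii}X_{jj}}=a$ makes $\mathcal F$ bounded. Strict concavity of $\log\det$ on $\Snpp$ yields a unique PD maximizer $X^\star$, and a Lagrangian argument in the spirit of Theorem~\ref{thm:maxdet} gives $(X^\star)^{-1}\in\range(\A^*)$, i.e., $(X^\star)^{-1}$ vanishes at every unspecified off-diagonal of $P$, so its support is the cycle-with-loops pattern. The partial matrix is invariant under the reversal $i\mapsto n+1-i$, and conjugation by the exchange matrix $J$ preserves both $\mathcal F$ and $\det$; hence uniqueness gives $JX^\star J=X^\star$, so $X^\star$ is persymmetric. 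For $n\le 4$, this persymmetry already exhausts the available off-diagonal parameters and directly forces $X^\star$ to be Toeplitz; the content of the theorem is the case $n\ge 5$.

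Next I would produce a Toeplitz candidate. Parametrize the symmetric Toeplitz completions of $P$ by the $n-3$ free entries $(t_2,\dots,t_{n-2})$; boundedness and strict concavity again give a unique PD Toeplitz maximizer $T^\star$. Writing $S$ for the unit forward shift, $\partial T/\partial t_k=S^k+(S^T)^k$, so the first-order conditions read
\[
\sum_{i=1}^{n-k}\bigl((T^\star)^{-1}\bigr)_{i,i+k}=0,\qquad k=2,\dots,n-2.
\]
Thus the sum along each unspecified diagonal of $(T^\star)^{-1}$ vanishes, while persymmetry (inherited from the symmetric Toeplitz $T^\star$) pairs entries across each such diagonal.

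The crux, and the main obstacle, is to promote these vanishing diagonal sums to entry-wise vanishing of $(T^\star)^{-1}$ at every unspecified position. Once that is done, $T^\star$ satisfies the characterization of the maximum-determinant completion from the previous step, so $T^\star=X^\star$ by uniqueness and the theorem follows. The plan here is to invoke the Gohberg--Semencul displacement representation of inverses of symmetric Toeplitz matrices: $(T^\star)^{-1}-S(T^\star)^{-1}S^T$ has rank at most two, which imposes a short linear recurrence along each diagonal of $(T^\star)^{-1}$. Combined with persymmetry and the vanishing diagonal sums, this recurrence should admit only the zero solution along each unspecified diagonal. This step is genuinely nontrivial, because the Toeplitz parameter count $n-3$ is strictly less than the number of unspecified off-diagonal positions of $P$, so only the rigid displacement structure of Toeplitz inverses can reconcile the Toeplitz optimality conditions with the full cycle-sparsity pattern. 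A viable alternative is an induction on $n$ via Schur complement along row and column $n$: deleting that vertex converts the cycle into a chord-free path whose maximum-determinant completion is the explicit Toeplitz Kac--Murdock--Szeg\"o matrix, after which one reinstates the corner entry $X^\star_{1n}=c$ using persymmetry and the compatibility conditions of Theorem~\ref{thm:simplecycle}.
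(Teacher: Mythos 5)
Your setup (uniqueness of the $\log\det$ maximizer, the optimality condition $(X^\star)^{-1}\in\range(\A^*)$ vanishing at unspecified positions, persymmetry from $J$-invariance, and the base case $n\le 4$) is all correct and agrees with the paper. The divergence, and the gap, is in your handling of $n\ge 5$.

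Your main route is genuinely different from the paper's and is not carried through. You propose to maximize $\log\det$ over the Toeplitz subfamily to get $T^\star$, read off the first-order conditions as vanishing of the \emph{diagonal sums} of $(T^\star)^{-1}$, and then claim that Gohberg--Semencul displacement structure plus persymmetry upgrade the vanishing sums to entry-wise vanishing. That upgrade is the whole theorem, and as written it does not go through on those two ingredients alone. Already for $n=5$: persymmetry collapses the $k=3$ sum to $2(T^{-1})_{1,4}=0$, giving entry-wise vanishing on that diagonal, but the $k=2$ sum gives only $(T^{-1})_{2,4}=-2(T^{-1})_{1,3}$; plugging the Gohberg--Semencul formula (with $x_3=0$) yields $(T^{-1})_{2,4}=x_2(1-x_4/x_0)$ and hence $x_2(3-x_4/x_0)=0$. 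Ruling out $x_4/x_0=3$ needs positive definiteness of $T^{-1}$ (the $\{1,5\}$ principal minor forces $|x_4|<x_0$), so the displacement structure and persymmetry alone are insufficient, and for general $n$ this bookkeeping compounds. You flag this step yourself as ``genuinely nontrivial''; it is, and it is not established.

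Your stated alternative is also not an accurate description of the actual mechanism. You say deleting vertex $n$ converts the cycle into a chord-free path whose maximizer is the Kac--Murdock--Szeg\H{o} matrix, then reinstate the corner. The paper instead \emph{keeps} a cycle: it reads $\alpha:=A_{1,n-1}$ off the unknown maximizer $A$, forms the $(n-1)\times(n-1)$ cycle with corner $\alpha$, invokes the inductive hypothesis to get a Toeplitz completion $B$, then glues $B$ and the last row/column (with $c$ and $b$) into a \emph{chordal} partial matrix, takes its maximum-determinant completion $C$, and uses a uniqueness-of-completion result (two PD matrices agreeing at each $(i,j)$ either in the matrix or in the inverse are equal, \cite{MR1321785}) to conclude $A=C$; Toeplitzness of $B$ plus persymmetry then finishes. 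So the inductive reduction is to a smaller cycle, not a path, and the decisive tool is the chordal splice together with the completion-uniqueness lemma rather than Gohberg--Semencul. If you want to salvage your Gohberg--Semencul route, you would need to prove, uniformly in $n$, that positive definiteness together with the displacement structure and the $n-3$ diagonal-sum identities force the first row of $T^{-1}$ to vanish at positions $3,\ldots,n-1$; that is a substantive missing lemma, not a routine deduction.
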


First we present the following technical lemma.  Let $J_n \in \Sn$ be the matrix with ones on the antidiagonal and zeros everywhere else, that is, $[J_n]_{ij} = 1$ when $i+j=n+1$ and zero otherwise. For instance,  $J_2=\begin{bmatrix} 0 & 1 \cr 1 & 0 \end{bmatrix}$.

\begin{lemma}\label{persymm} If $A$ is the maximum determinant completion of $P$, then $A=JAJ$. 
\end{lemma}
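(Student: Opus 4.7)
The strategy is a clean symmetry/uniqueness argument. The key observation is that the partial matrix $P$ is \emph{persymmetric}: the specified pattern is invariant under the map $(i,j) \mapsto (n+1-i, n+1-j)$, and since the diagonal is constant $a$, the sub/superdiagonal constant $b$, and the corners both equal $c$, the specified values also respect this symmetry. In short, if we denote the pattern of specified positions by $\Omega$ and let $P_\Omega$ denote the assigned values, then $JPJ$ agrees with $P$ on $\Omega$ (recalling that $(JXJ)_{ij} = X_{n+1-i,n+1-j}$).

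With this in hand, I would proceed as follows. First I would note that $J$ is a symmetric orthogonal involution: $J^T = J$ and $J^2 = I$. Consequently, for any $X \in \Sn$, one has $\det(JXJ) = \det(J)^2 \det(X) = \det(X)$, and $JXJ \succeq 0$ if and only if $X \succeq 0$. Second, I would invoke the hypothesis that $P$ admits a positive definite completion, so the set of PSD completions of $P$ is a nonempty convex set containing a PD point; by strict concavity of $\log \det$ on $\Snpp$ (used, e.g., in Theorem~\ref{thm:maxdet}), the maximum determinant completion $A$ of $P$ is uniquely defined and satisfies $A \succ 0$.

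Third, I would form $B := JAJ$. By the first observation, $B \succ 0$ and $\det(B) = \det(A)$. For any $(i,j) \in \Omega$, the persymmetry of the pattern gives $(n+1-i, n+1-j) \in \Omega$ with the same prescribed value, so
\[
B_{ij} = A_{n+1-i,n+1-j} = P_{n+1-i,n+1-j} = P_{ij}.
\]
Hence $B$ is also a PSD completion of $P$ achieving the maximum determinant. By the uniqueness established in the previous step, $B = A$, i.e., $A = JAJ$.

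There is no real obstacle here; the only subtlety to watch is the justification of uniqueness of the maximum determinant completion, which follows from strict concavity of $\log\det$ over the PD cone exactly as in the proof of Theorem~\ref{thm:maxdet} (the feasible set of PSD completions is convex and nonempty with a PD element, $\log\det$ is strictly concave and tends to $-\infty$ on the PSD boundary). Everything else is the elementary observation that conjugation by $J$ is a determinant-preserving, PSD-preserving involution that stabilizes the pattern and the prescribed values of $P$.
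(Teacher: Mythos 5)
Your proof is correct and follows essentially the same argument as the paper: observe that $JAJ$ is again a completion of $P$ (because the pattern and prescribed values are persymmetric), that $\det(JAJ)=\det(A)$, and then invoke uniqueness of the maximum determinant completion to conclude $A=JAJ$. The only difference is that you spell out the supporting details (persymmetry of $P$, properties of conjugation by $J$, and the strict-concavity argument for uniqueness) that the paper leaves implicit.
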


\begin{proof} As $A$ is a completion of $P$, so is $JAJ$. Furthermore, $\det (A) = \det (JAJ)$. Since the maximum determinant completion is unique, we must have that $A=JAJ$. \end{proof}

\begin{proof}[Proof of Theorem~\ref{md}]
The proof is by induction on the size $n$. When $n=4$ the result follows from Lemma \ref{persymm}.

Suppose Theorem~\ref{md} holds for size $n-1$. Let $A$ be the maximum determinant completion of $P$. 
Then by the optimality conditions of Theorem~\ref{thm:maxdet},
$$A^{-1}= \begin{bmatrix}
* & * &0 & \cdots  & 0 & * \\
* & * & * & 0 & \ddots & 0\\
0 & * & * & * & \ddots &  \vdots \\
\vdots & \ddots & \ddots & \ddots & \ddots & 0\\
0 & \cdots & 0 & * & * & * \\
* & 0 & \cdots & 0& *& *
\end{bmatrix}. $$
Let $\alpha := A_{1,n-1}$, and consider the $(n-1)\times (n-1)$ partial matrix 
\begin{equation}\label{simple2} \begin{bmatrix}
a & b & & & \alpha \\
b & a & b & ? & \\
& b & a & \ddots & \\
& ? & \ddots & \ddots & b \\
\alpha & & & b & a
\end{bmatrix}, \end{equation}
By the induction assumption, \eqref{simple2} has a Toeplitz maximum determinant completion, say $B$. 
Note that 
\begin{equation}\label{simple4} B^{-1}= \begin{bmatrix}
* & * &0 & \cdots  & 0 & * \\
* & * & * & 0 & \ddots & 0\\
0 & * & * & * & \ddots &  \vdots \\
\vdots & \ddots & \ddots & \ddots & \ddots & 0\\
0 & \cdots & 0 & * & * & * \\
* & 0 & \cdots & 0& *& *
\end{bmatrix}. \end{equation}
Now consider the partial matrix
\begin{equation}\label{simple3} \begin{bmatrix} B & \begin{bmatrix} c \cr ? \cr \vdots \cr ? \cr b \end{bmatrix} \cr 
 \begin{bmatrix} c & ? & \cdots & ? & b \end{bmatrix} & a \end{bmatrix} 
\end{equation}
Since this is a chordal pattern we only need to check that the fully prescribed principal minors are positive definite. These are $B$ and 
$$ \begin{bmatrix} a & \alpha & c \cr \alpha & a & b \cr c & b & a
\end{bmatrix} , $$ the latter of which is a principal submatrix of the
positive definite matrix $A$. Thus \eqref{simple3} has a maximum determinant completion, say $C$. Then 
$$C^{-1} = \begin{bmatrix} * &  \begin{bmatrix} * \cr 0 \cr \vdots \cr 0 \cr * \end{bmatrix} \cr 
 \begin{bmatrix} * & 0 & \cdots & 0 & * \end{bmatrix} & * \end{bmatrix} = : \begin{bmatrix} L & M \cr M^T & N \end{bmatrix} . $$  
By the properties of block inversion,
$$ C = \begin{bmatrix} (L-MN^{-1}M^T)^{-1} & * \cr * & * \end{bmatrix} = \begin{bmatrix} B & * \cr * & * \end{bmatrix} , $$
and it follows that $B^{-1} = L-MN^{-1}M^T$. Since $MN^{-1}M^T$ only has nonzero entries in the four corners, we obtain that 
$$L=\begin{bmatrix}
* & * &0 & \cdots  & 0 & * \\
* & * & * & 0 & \ddots & 0\\
0 & * & * & * & \ddots &  \vdots \\
\vdots & \ddots & \ddots & \ddots & \ddots & 0\\
0 & \cdots & 0 & * & * & * \\
* & 0 & \cdots & 0& *& *
\end{bmatrix}.
$$
We now see that $C^{-1}$ and $A^{-1}$ have zeros in all entries $(i,j)$ with $|i-j| >1$ and $(i,j) \not\in\{(1,n-1), (1,n), (n-1,1) , (n,1)\}$. Also, $A$ and $C$ have the same entries in positions $(i,j)$ where $|i-j|\le 1$ or where $(i,j) \in\{(1,n-1), (1,n), (n-1,1) , (n,1)\}$. But then $A$ and $C$ are two positive definite matrices where for each $(i,j)$ either $A_{ij}=C_{ij}$ or $(A^{-1})_{ij} = (C^{-1})_{ij}$, yielding that $A=C$ (see, e.g., \cite{MR1321785}). Finally, observe that the Toeplitz matrix $B$ is the $(n-1)\times (n-1)$ upper left submatrix of $C$, and that $A=JAJ$, to conclude that $A$ is Toeplitz.
\end{proof}
When \eqref{simple} has a PD completion, then this result states that the analytic center of all the completions is Toeplitz.  When \eqref{simple} has a PSD completion, but not a PD completion then the primal part of the parametric path is always Toeplitz and since the Toeplitz matrices are closed, \eqref{simple} admits a maximum rank Toeplitz PSD completion.  In the following proposition we see that the dual part of the parametric path has a specific form.
\begin{prop} 
\label{Tinverse}
Let $T=(t_{i-j})_{i,j=1}^n$ be a positive definite real Toeplitz matrix, and suppose that $(T^{-1})_{k,1}=0$ for all $k\in \{3,\ldots , n-1\}$. Then $T^{-1}$ has the form 
$$\begin{bmatrix}
a & c & 0& & d \\
c & b & c & \ddots & \\
0& c & b & \ddots &0 \\
& \ddots & \ddots & \ddots & c \\
d & & 0& c & a
\end{bmatrix},$$
with $b=\frac{1}{a} (a^2+c^2-d^2)$.
\end{prop}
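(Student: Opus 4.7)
My plan is to write down the matrix $M$ of the claimed form (with $b := (a^2+c^2-d^2)/a$) and verify $MT = I$ directly. First note that since $T$ is symmetric Toeplitz, $JTJ = T$, so $T^{-1}$ is both symmetric and persymmetric; this pins down the first row, last row, and last column of $T^{-1}$ from the given first column (the last column being $J(a,c,0,\ldots,0,d)^T = (d,0,\ldots,0,c,a)^T$), so the outer ``frame'' of $M$ matches $T^{-1}$ automatically and the only real issue is the interior.

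Next I would translate the hypothesis $T(T^{-1}e_1) = e_1$ into scalar identities. Writing $T_{ij} = t_{i-j}$ with the symmetric extension $t_{-\ell} = t_{\ell}$ and setting $m := i - 1$, these read
\[
(A_m)\qquad a\, t_m + c\, t_{m-1} + d\, t_{n-1-m} \;=\; \delta_{m,0}, \qquad m = 0, 1, \ldots, n-1.
\]
Now compute $(MT)_{i,j}$ row by row. For $i \in \{1, n\}$ the sparsity of the corresponding row of $M$ makes $(MT)_{i,j} = \delta_{i,j}$ reduce exactly to the equations $(A_m)$, which hold by hypothesis. For $i \in \{2, \ldots, n-1\}$, row $i$ of $M$ has just three nonzero entries ($c$ at columns $i\pm 1$, $b$ at column $i$), so with $m := i - j$ the identity $(MT)_{i,j} = \delta_{i,j}$ becomes
\[
(C_m)\qquad c\,(t_{m-1} + t_{m+1}) + b\, t_m \;=\; \delta_{m,0},
\]
which, by $t_{-\ell}=t_\ell$, need only be verified for $m \in \{0, 1, \ldots, n-2\}$.

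The hard part will be producing $(C_m)$ as a linear combination of the $(A_\cdot)$'s. The winning combination is
\[
(A_m) \;+\; \tfrac{c}{a}\,(A_{m+1}) \;-\; \tfrac{d}{a}\,(A_{n-1-m}),
\]
which is well-defined since $T \succ 0$ forces $a = (T^{-1})_{11} > 0$. On the left, $t_{m-1}$ picks up $c$ and $t_{m+1}$ picks up $\tfrac{c}{a}\cdot a = c$; the ``bulk'' terms $t_{n-1-m}$ and $t_{n-2-m}$ cancel (the latter because $\tfrac{c}{a}\cdot d - \tfrac{d}{a}\cdot c = 0$); and $t_m$ accumulates $a + \tfrac{c^2}{a} - \tfrac{d^2}{a} = \tfrac{a^2+c^2-d^2}{a} = b$. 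On the right, $\delta_{m,0} + \tfrac{c}{a}\delta_{m+1,0} - \tfrac{d}{a}\delta_{n-1-m,0}$ collapses to $\delta_{m,0}$ throughout $m \in \{0, \ldots, n-2\}$ (using $n \ge 4$), since the two shifted deltas vanish on this range. This yields $(C_m)$, hence $MT = I$, so $T^{-1} = M$ with exactly the entries claimed and $b = (a^2+c^2-d^2)/a$.

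The only real obstacle is identifying the three-term combination; it is motivated by the need to kill the two ``far-diagonal'' terms $t_{n-1-m}$ and $t_{n-2-m}$, which forces the ratio $d/a$ in the last summand and then the ratio $c/a$ in the middle one; after these cancellations the formula for $b$ drops out as the coefficient of $t_m$. Everything else is bookkeeping driven by the symmetric extension $t_{-\ell} = t_\ell$, which encodes persymmetry at the scalar level.
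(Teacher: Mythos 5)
Your proof is correct, and it takes a genuinely different route from the paper. The paper invokes the Gohberg--Semencul inversion formula
$T^{-1} = \tfrac{1}{a}\bigl(AA^T - BB^T\bigr)$
as a black box, with $A,B$ the lower triangular Toeplitz matrices built from the first column of $T^{-1}$ and its reversal-with-sign; the claimed five-diagonal-plus-corners pattern and the relation $b=(a^2+c^2-d^2)/a$ then fall out by a short, direct computation of the diagonal and off-diagonal entries of $AA^T - BB^T$. Your proof is self-contained: you postulate the matrix $M$ of the claimed shape, encode the hypothesis $T\bigl(T^{-1}e_1\bigr)=e_1$ as the scalar identities $(A_m)$, and then verify $MT=I$ row by row, with the interior rows $(C_m)$ obtained via the combination $(A_m)+\tfrac{c}{a}(A_{m+1})-\tfrac{d}{a}(A_{n-1-m})$. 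I checked the index bookkeeping and the cancellations (the $t_{n-1-m}$ and $t_{n-2-m}$ terms, the accumulation of $b$ as the $t_m$ coefficient, the collapse of the right-hand sides to $\delta_{m,0}$ over $m\in\{0,\dots,n-2\}$, and the well-definedness $a=(T^{-1})_{11}>0$), and it all works, including the boundary cases $m=0$ and $m=n-2$ where two of the three equations coincide. The tradeoff is clear: the paper's argument is shorter and reveals that this is a special case of a classical structure theorem for Toeplitz inverses, while yours reconstructs exactly the algebra behind that theorem in this sparse setting, requiring no external machinery. One minor remark: your opening persymmetry discussion (pinning down the frame of $T^{-1}$ from the first column) is good motivation but does no logical work once you commit to verifying $MT=I$ directly, so it could be omitted or flagged as heuristic.
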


\begin{proof} 
Let us denote the first column of $T$ by $\begin{bmatrix} a & c & 0 &
\cdots & 0 & d \end{bmatrix}^T$. By the \textdef{Gohberg-Semencul
formula} (see \cite{MR0353038,MR1038316}) we have that
$$ T^{-1} =\frac{1}{a} ( AA^T-BB^T ), $$
where 
$$ A=\begin{bmatrix}
a & 0 & 0& & 0 \\
c & a & 0 & \ddots & \\
0& c & a & \ddots &0 \\
& \ddots & \ddots & \ddots & 0 \\
d & & 0& c & a
\end{bmatrix}, B= \begin{bmatrix}
0 & 0 & 0& & 0 \\
d & 0 & 0 & \ddots & \\
0& d & 0 & \ddots &0 \\
& \ddots & \ddots & \ddots & 0 \\
c & & 0& d & 0
\end{bmatrix}.$$
\end{proof}

\begin{corollary} 
\label{cor:expvecsimplecycle}
If the set of PSD completions of \eqref{simple} is contained in a proper face of $\Snp$ then there exists an exposing vector of the form
$$C_E := \begin{bmatrix}
a & c & 0& & d \\
c & b & c & \ddots & \\
0& c & b & \ddots &0 \\
& \ddots & \ddots & \ddots & c \\
d & & 0& c & a
\end{bmatrix},$$
for a face containing the completions.  Moreover, $C_E$ satisfies
$$2\cos(\theta)c + b = 0 \quad \text{and} \quad a + \cos(\theta)c + \cos(\phi)d = 0.$$
\end{corollary}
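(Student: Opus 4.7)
The plan is to derive $C_E$ as the limiting dual slack $\bar Z$ of the parametric path of Section~\ref{sec:paramprob} applied to the spectrahedron $\F$ of PSD completions of \eqref{simple}, by combining the Toeplitz rigidity of the analytic centers (Theorem~\ref{md}) with the inverse formula of Proposition~\ref{Tinverse}. First I would verify Assumption~\ref{assump:main}: $\A$ is surjective since each of its constraints pins a distinct matrix entry; $\F$ is nonempty by the standing hypothesis that a PSD completion exists; $\F$ is bounded because the fixed unit diagonal forces $\trace(X)=n$; and $\F$ is contained in a proper face by hypothesis. Hence Theorem~\ref{thm:2paramconverge} supplies a parametric path $(X(\alpha),y(\alpha),Z(\alpha))$ converging to $(\bar X,\bar y,\bar Z)$ with $\bar X\in\relint(\F)$ and $\bar Z=\A^*(\bar y)\in\Snp$.

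Next, I would exploit that the perturbed problem $\Fa$ is again the set of PSD completions of the same cycle pattern, but with diagonal $1+\alpha$ in place of $1$, and it admits the PD completion $X+\alpha I$ for any $X\in\F$. So Theorem~\ref{md} forces $X(\alpha)$ (the maximum determinant completion over $\Fa$) to be Toeplitz. The optimality conditions of Theorem~\ref{thm:maxdet} give $X(\alpha)^{-1}=\alpha^{-1}Z(\alpha)\in\range(\A^*)$, and since $\range(\A^*)$ consists of matrices supported only on the diagonal, the near-diagonal, and the $(1,n)/(n,1)$ corners, this forces $(X(\alpha)^{-1})_{k,1}=0$ for every $k\in\{3,\ldots,n-1\}$. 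Proposition~\ref{Tinverse} then applies to $T=X(\alpha)$ and pins $X(\alpha)^{-1}$, hence $Z(\alpha)$, to have precisely the shape of $C_E$; closedness of that shape in $\Sn$ propagates it to $\bar Z$.

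Then I would verify that $\bar Z$ is a bona fide nonzero exposing vector. From the scaled complementarity $Z(\alpha)X(\alpha)=\alpha I\to 0$ we obtain $\bar Z\bar X=0$, which, together with $\bar X\in\relint(\F)$, gives that $\bar Z$ exposes a face of $\Snp$ containing $\F$. For $\bar Z\ne 0$, the idea is that $Z(\alpha)$ and $X(\alpha)$ commute (their product is a scalar multiple of $I$), so their eigenvalues pair as $\lambda\mu=\alpha$; the eigenvalues of $X(\alpha)$ associated to the nullspace of $\bar X$ must decay at rate $\Theta(\alpha)$ along the analytic path from Theorem~\ref{thm:2paramconverge}, so the corresponding eigenvalues of $Z(\alpha)$ stay bounded below. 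Since $\bar X$ is singular (as $\F$ lies in a proper face, the maximal rank $r<n$), this forces $\bar Z\ne 0$. I expect this nonvanishing step to be the main delicate point.

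Finally, the two scalar relations fall out by reading off two specific entries of $\bar Z\bar X=0$. The first row of $\bar Z$ has nonzero entries only in columns $1,2,n$, so $[\bar Z\bar X]_{1,1}$ couples $\bar Z$ only to the prescribed entries $\bar X_{1,1}=1$, $\bar X_{2,1}=\cos(\theta)$, $\bar X_{n,1}=\cos(\phi)$, yielding $a+\cos(\theta)c+\cos(\phi)d=0$. Similarly the second row of $\bar Z$ has nonzero entries in columns $1,2,3$, and computing $[\bar Z\bar X]_{2,2}$ with $\bar X_{1,2}=\cos(\theta)$, $\bar X_{2,2}=1$, $\bar X_{3,2}=\cos(\theta)$ yields $2\cos(\theta)c+b=0$. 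The unknown Toeplitz entries of $\bar X$ never enter, thanks to the sparsity of $\bar Z$.
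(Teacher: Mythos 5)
Your proposal follows essentially the same route the paper takes, but spells out the steps the paper compresses into the single line ``Existence follows from Proposition~\ref{Tinverse}.'' You reconstruct the intended chain correctly: verify Assumption~\ref{assump:main} (surjectivity from the fact that each constraint pins a distinct entry, boundedness from the fixed diagonal, proper face by hypothesis); invoke the parametric path; observe that $\Fa$ is again a cycle completion problem with diagonal $1+\alpha$, so Theorem~\ref{md} forces $X(\alpha)$ to be Toeplitz; read off the sparsity of $X(\alpha)^{-1}=\alpha^{-1}Z(\alpha)$ from $Z(\alpha)=\A^*(y(\alpha))$; apply Proposition~\ref{Tinverse} to get the banded corner shape; pass to the limit $\bar Z$; and finally extract the two scalar relations from $[\bar Z\bar X]_{1,1}=[\bar Z\bar X]_{2,2}=0$, which is the same computation as the paper's ``$\diag(XC_E)=0$.''

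The one place where your argument does not close is the one you flag as delicate: $\bar Z\neq 0$. Your justification is that the small eigenvalues of $X(\alpha)$ ``must decay at rate $\Theta(\alpha)$ along the analytic path,'' but only one side of this is established. Lemma~\ref{lem:technicalbounded} and the argument in Theorem~\ref{thm:2paramcluster} give $\|Z(\alpha)\|_2\le M$, i.e.\ $\lambda_n(X(\alpha))\ge\alpha/M$ (the lower bound). To conclude $\bar Z\neq 0$ you need the matching \emph{upper} bound $\lambda_n(X(\alpha))\le C\alpha$; nothing in Theorem~\ref{thm:2paramconverge} or the Milnor analyticity argument rules out a slower decay such as $\lambda_n(X(\alpha))\sim\sqrt{\alpha}$, which would give $\bar Z=0$. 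Pairing of eigenvalues via $Z(\alpha)X(\alpha)=\alpha I$ does not by itself pin the exponent. This is a genuine gap in your write-up. To be fair, the paper's own proof is terser and does not address this point explicitly either — it simply asserts existence — so you have not missed an idea the authors supplied; you have surfaced a step that the paper glosses over. If you wanted to close it, you would need either an argument that the singularity degree of this family of spectrahedra is one (so the decay rate is exactly $\Theta(\alpha)$), or a direct argument that a nonzero exposing vector of the Gohberg--Semencul banded-corner form exists, e.g.\ by symmetrizing the alternative system from Theorem~\ref{thm:alternative}.

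Two small remarks. First, a cleaner way to see the sparsity of $X(\alpha)^{-1}$ is simply that the optimality condition from Theorem~\ref{thm:maxdet} puts $X(\alpha)^{-1}\in\range(\A^*)$, which is exactly the set of symmetric matrices supported on the cycle (diagonal, first off-diagonal, $(1,n)$ corner); you say this but could state it before invoking Proposition~\ref{Tinverse}, whose hypothesis is exactly that support condition on the first column. Second, once $\bar Z\succeq 0$, $\bar Z\bar X=0$, $\bar X\in\relint(\F)$, and $\bar Z\neq 0$ are in hand, the claim that $\bar Z$ exposes a face containing $\F$ follows from Theorem~\ref{thm:face}; you state this correctly.
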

\begin{proof}
Existence follows from Proposition~\ref{Tinverse}.  By definition, $C_E$
is an exposing vector for the face if, and only if, $C_E \succeq 0$ and
$\langle X, C_E\rangle = 0$ for all positive semidefinite completions, $X$, of
$C$.  Since $X$ and $C_E$ are positive semidefinite, we have $XC_E = 0$ and in particular $\diag(XC_E) = 0$, which
is satisfied if, and only if,
$$\cos(\theta)c + b + \cos(\theta)c = 0 \quad \text{and} \quad a + \cos(\theta)c + \cos(\phi)d = 0,$$
as desired.
\end{proof}

\section{Conclusion}
\label{sec:conclusion}
In this paper we have considered a `primal' approach to facial reduction for \SDPs that reduces to finding a relative interior point of a spectrahedron.  By considering a parametric optimization problem, we constructed a smooth path and proved that its limit point is in the relative interior of the spectrahedron.  Moreover, we gave a sufficient condition for the relative interior point to coincide with the analytic center.  We proposed a projected Gauss-Newton algorithm to follow the parametric path to the limit point and in the numerical results we observed that the algorithm converges.  We also presented a method for constructing spectrahedra with singularity degree $1$ and provided a sufficient condition for constructing spectrahedra of larger singularity degree.  Finally, we showed that the parametric path has interesting structure for the simple cycle completion problem.  

This research has also highlighted some new problems to be pursued.  We single out two such problems.  The first regards the eigenvalues of the limit point that are neither sufficiently small to be deemed zero nor sufficiently large to be considered as non-zero.  We have experimented with some eigenvalue deflation techniques, but none have led to a satisfactory method.  Secondly, there does not seem to be a method in the literature for constructing spectrahedra with specified singularity degree.

\printindex
\addcontentsline{toc}{section}{Index}
\label{ind:index}

\bibliography{.master,.edm,.psd,.bjorBOOK,.qap}
\addcontentsline{toc}{section}{Bibliography}

\end{document}